\theoremstyle{plain}
\newtheorem{thm}{Theorem}[section]
\newtheorem{pro}[thm]{Proposition}
\newtheorem{lem}[thm]{Lemma}
\newtheorem{rem}[thm]{Remark}
\newtheorem*{ex*}{Example}
\numberwithin{equation}{section}
\newcommand{\B}{\mathbb{B}}
\newcommand{\Z}{\mathbb{Z}_{2}^{d}}
\newcommand{\N}{\mathbb{N}}
\newcommand{\R}{\mathbb{R}_{+}^{d}}
\newcommand{\RR}{\mathbb{R}^{d}}
\def\a{\alpha}
\def\z{\zeta}
\def\ll{\lambda}
\def\t{\theta}
\def\v{\varphi}
\def\eps{\varepsilon}
\def\b{(\z,q_{\pm})}
\def\c{(\z,q_{\pm}(\t,y,s))}
\def\eee{(\z,q_{\pm}(x^{*},y,s))}
\def\fff{(\z,q_{\pm}(x,y^{*},s))}
\def\zz{(\z,q_{\pm}(x+z,y,s))}
\def\ww{(\z,q_{\pm}(\t+z,y,s))}
\DeclareMathOperator{\e}{{\mathbb{E}\textrm{xp}}}
\DeclareMathOperator{\lo}{{\mathbb{L}\textrm{og}}}
\DeclareMathOperator{\domain}{Dom}
\DeclareMathOperator{\supp}{supp}
\begin{document}

\subjclass[2000]{42C10 (primary), 42B25, 42B20 (secondary)}
\keywords{square function, $g$-function, Lusin's area integral, Dunkl's harmonic oscillator, generalized Hermite expansions, Laguerre semigroup, Laguerre expansions of convolution type, Calder\'on-Zygmund operator, $A_p$ weight}


\title[Square functions in certain Dunkl and Laguerre settings]{On Lusin's area integrals and g-functions in certain Dunkl and Laguerre settings}
\author[]{Tomasz Szarek}
\address{Tomasz Szarek,     \newline
      ul{.} W{.} Rutkiewicz 29\slash 43,
      PL-50--571 Wroc\l{}aw, Poland        \vspace{10pt}}

\email{szarektomaszz@gmail.com}

\begin{abstract}
We investigate $g$-functions and Lusin's area type integrals related to certain multi-dimen\-sional Dunkl and Laguerre settings. We prove that the considered square functions are bounded on weighted $L^p$, $1<p<\infty$, and from $L^1$ into weak $L^1$.
\end{abstract}

\maketitle

\section{Introduction}\label{intro}
\setcounter{equation}{0}

This paper embraces a completion and extension of the research initiated by the author in \cite{Sz} that concerned square functions related to the so-called Laguerre expansions of convolution type. Here we generalize the results of \cite{Sz} by studying square functions in the context of the Dunkl harmonic oscillator and the related group of reflections isomorphic to $\Z$. This Dunkl setting reduces to that of \cite{Sz} after restricting to reflection invariant functions. Consequently, the results delivered by the present paper implicitly contain, in particular, those of \cite{Sz}. Moreover, a trivial choice of the multiplicity function reduces the Dunkl setting to the situation of classical Hermite function expansions. Thus our results may also be seen as a continuation and extension of the investigations of Thangavelu \cite{T}, Harboure, de Rosa, Segovia and Torrea \cite{HRST} and Stempak and Torrea \cite{StTo2}, concerning $g$-functions in the context of the classic harmonic oscillator.  

An essential novelty in comparison with the previous study is the investigation of Lusin's area type integrals. These objects have more complex structure than the vertical and horizontal $g$-functions and hence their treatment requires additional arguments and effort. The results obtained in the Dunkl setting imply similar results in the Hermite setting and in the Laguerre situation of \cite{Sz}, where Lusin's area type integrals were not considered.

It is commonly known that square functions play an important role in harmonic analysis (see \cite[Section 1]{Sz} for brief comments and references), being valuable tools with several significant applications. Also the results we prove have some interesting potential applications, which remain to be investigated; this concerns, in particular, multiplier theorems and characterizations of Hardy spaces. Similarly to \cite{Sz}, the present work contributes to the development of Littlewood-Paley theory for discrete and continuous orthogonal expansions, which receives a considerable attention in recent years, see \cite[Section 1]{Sz} for references. In particular, Lusin's area type integrals in the context of another, one-dimensional, Laguerre setting, and also in the one-dimensional Hermite context, were studied very recently by Betancor, Molina and Rodr\'{\i}guez-Mesa \cite{BMR}. 

We refer the reader to the survey article by R\"osler \cite{R2} for basic facts concerning Dunkl's theory. A precise description of the Dunkl framework for the particular group of reflections $G$ isomorphic to $\Z$ can be found for instance in \cite[Section 3]{NS3}. Here we only invoke the most relevant facts. We shall work on the space $\RR$, $d\geq 1$, equipped with the measure
\begin{align*}
dw_{\a}(x)=\prod_{j=1}^{d}|x_{j}|^{2\a_{j}+1}\,dx,\qquad x=(x_1,\ldots,x_d)\in\RR,
\end{align*}
and with the Euclidean norm $|\cdot|$. The multi-index $\a=(\a_1,\ldots,\a_d)$ will always be assumed to belong to $[-1\slash 2,\infty)^d$. Consider the reflection group $G$ generated by $\sigma_{j}$, $j=1,\ldots,d$,
$$
\sigma_{j}(x_{1},\ldots,x_{j},\ldots,x_{d})=(x_{1},\ldots,-x_{j},\ldots,x_{d}).
$$
Clearly, the reflection $\sigma_{j}$ is in the hyperplane orthogonal to $e_j$, the $j$th coordinate vector. Notice that the measure $w_{\a}$ is $G$-invariant. The Dunkl differential-difference operators $T_{j}^{\a}$, $j=1,\ldots,d$, are given by
\begin{align*}
T_{j}^{\a}f(x)=\partial_{x_{j}}f(x)+(\a_{j}+1\slash 2)\frac{f(x)-f(\sigma_{j}x)}{x_{j}},\qquad f\in C^{1}(\mathbb{R}^{d}),\qquad j=1,\ldots,d,
\end{align*}
and form a commuting system. The Dunkl Laplacian,
\begin{align*}
\Delta_{\a}f(x)=\sum_{j=1}^{d}\big(T_{j}^{\a}\big)^{2}f(x)
=
\sum_{j=1}^{d}\bigg(\frac{\partial^{2}f}{\partial x_j^2}(x)+\frac{2\a_j+1}{x_j}\frac{\partial f}{\partial x_j}(x)-(\a_j+1\slash 2)\frac{f(x)-f(\sigma_j x)}{x_j^2}\bigg),
\end{align*} 
is formally self-adjoint in $L^2(\RR,dw_{\a})$. The Dunkl harmonic oscillator is defined as
\begin{align*}
L_{\a}=-\Delta_{\a}+|x|^{2}.
\end{align*}
This operator will play in the present paper a similar role to that of the Euclidean Laplacian in the classical harmonic analysis. Note that for $\a=(-1\slash 2,\ldots,-1\slash 2)$, $L_\a$ becomes the classic harmonic oscillator $-\Delta+|x|^2$. We shall consider a self-adjoint extension $\mathcal{L}_\a$ of $L_\a$, whose spectral decomposition is discrete and given by the generalized Hermite functions $h_n^\a$, see Section \ref{pre} for details. Natural partial derivatives related to $L_\a$ are obtained from the symmetric decomposition 
\begin{align*}
L_{\a}=\frac{1}{2}\sum_{j=1}^{d}(\delta_{j}^{*}\delta_{j}+\delta_{j}\delta_{j}^{*}),
\end{align*}
where
\begin{align*}
\delta_{j}=T_{j}^{\a}+x_{j},\qquad \delta_{j}^{*}=-T_{j}^{\a}+x_{j},\qquad j=1,\ldots,d;
\end{align*}
here $\delta_j^*$ is the formal adjoint of $\delta_j$ in $L^2(\RR,dw_\a)$.

The main objects of our study are vertical and horizontal $g$-functions and Lusin's type area integrals based on the semigroup generated by $\mathcal{L}_\a$. Our main result, Theorem \ref{main} below, says that each of the square functions is bounded on weighted $L^p(dw_\a)$, $1<p<\infty$, and satisfies weighted weak type (1,1) inequality for a large class of weights. To prove this, we exploit the arguments from \cite{NS2} that allow to reduce the analysis to the context of the smaller measure space $(\R,dw_{\a}^{+})$ and suitably defined Laguerre-type square functions, where $\R=(0,\infty)^d$ and $w_\a^+$ is the restriction of $w_\a$ to $\R$. Then we apply the general theory of vector-valued Calder\'on-Zygmund operators with the underlying space of homogeneous type $(\R,dw_{\a}^{+},|\cdot|)$. The main technical difficulty connected with this approach is to show the relevant kernel estimates. Here, similarly as in \cite{Sz}, we use a convenient technique having roots in Sasso's work \cite{Sa} and developed later by Nowak and Stempak in \cite{NS1}. For our purposes we derive some further generalizations of this interesting method, which may be of independent interest. It is remarkable that essentially the same procedure applies as well to higher order square functions in the investigated setting. The related analysis, however, is because of its length beyond the scope of this article.

The paper is organized as follows. Section \ref{pre} contains the setup, definitions of the investigated square functions, statements of the main results and the accompanying comments and remarks. Also, suitable Laguerre-type square functions, related to the restricted space $(\R,dw_{\a}^{+})$, are defined and the proof of the main theorem is reduced to showing that these auxiliary square functions can be viewed as vector-valued Calder\'on-Zygmund operators. In Section \ref{o+s} the Laguerre-type square functions are proved to be $L^2$-bounded and associated, in the Calder\'on-Zygmund theory sense, with the relevant kernels. Finally, Section \ref{ker} is devoted to the proofs of all necessary kernel estimates. This is the largest and most technical part of the work.

Throughout the paper we use a standard notation with essentially all symbols referring to the spaces $(\RR,dw_{\a},|\cdot|)$ or $(\R,dw_{\a}^{+},|\cdot|)$. Thus $\Delta$ and $\nabla$ denote the Euclidean Laplacian and gradient, respectively. Further, $L^{p}(\RR,Wdw_{\a})$ stands for the weighted $L^{p}(\RR,dw_{\a})$ space, $W$ being a nonnegative weight on $\RR$; we write simply $L^{p}(dw_{\a})$ if $W\equiv 1$. 
By $\langle f,g\rangle_{dw_{\a}}$ we mean $\int_{\RR}f(x)\overline{g(x)}\,dw_{\a}(x)$ whenever the integral makes sense. In a similar way we define $L^{p}(\R,Wdw_{\a}^{+})$ and $\langle f,g\rangle_{dw_{\a}^+}$. For  $1\leq p<\infty$ we denote by $A_{p}^{\a,+}$ the Muckenhoupt class of $A_{p}$ weights associated to the space $(\R,dw_{\a}^+,|\cdot|)$. 

While writing estimates we will frequently use the notation $X\lesssim Y$ to indicate that $X\leq CY$ with a positive constant $C$ independent of significant quantities. We will write $X\simeq Y$ when $X\lesssim Y$ and $Y\lesssim X$.

\textbf{Acknowledgments.} The author would like to thank Dr. Adam Nowak for suggesting the topic and for many discussions related to this paper.

\section{Preliminaries and statement of results}\label{pre}
\setcounter{equation}{0}

Let $m=(m_1,\ldots,m_d) \in \N^d$, $\N=\{0,1,\dots\}$,  and
$\a = (\a _1, \ldots , \a _d) \in [-1\slash 2,\infty)^{d}$
be multi-indices. The generalized Hermite functions in $\RR$ are defined as the tensor products
$$
h_{m}^{\alpha}(x) = h _{m_1}^{\alpha _1}(x_1) \cdot \ldots \cdot
h_{m_d}^{\alpha _d}(x_d), \qquad x = (x_1, \ldots ,x_d)\in \RR,
$$
where $h_{m_i}^{\alpha _i}$ are the one-dimensional generalized Hermite functions
\begin{align*}
h_{2m_i}^{\alpha_i}(x_i)&=d_{2m_i,\alpha_i}e^{-x_i^2/2}L_{m_i}^{\alpha_i}(x_i^2),\\
h_{2m_i+1}^{\alpha _i}(x_i)&=d_{2m_i+1,\alpha_i}e^{-x_i^2/2}x_iL_{m_i}^{\alpha_i+1}(x_i^2);
\end{align*}
here $L_{m_i}^{\alpha_i}$ is the Laguerre polynomial of degree $m_{i}$ and order $\a_i$, and $d_{k,\a_{i}}$, $k\in\N$, are proper normalizing constants, see \cite[p.\,544]{NS3} or \cite[p.\,4]{NS2}. The system $\{h_{m}^{\a} : m\in\N^{d} \}$ is an orthonormal basis in $L^{2}(\RR,dw_{\a})$ consisting of eigenfunctions of $L_{\a}$,
\begin{align*}
L_{\a}h_{m}^{\a}=\ll_{|m|}^{\a} h_{m}^{\a},\qquad \ll_n^\a=2n+2|\a|+2d,\qquad n\in\N;
\end{align*}
here $|m|=m_1+\ldots+m_d$ is the length of $m$.
The operator
\begin{align*}
\mathcal{L}_\alpha f=\sum_{n=0}^\infty\ll_{n}^{\a}
\sum_{|m|=n}\langle f,h_m^{\alpha}
  \rangle_{dw_{\alpha}}h_m^{\alpha},
\end{align*}
defined on the domain
$$
  \domain(\mathcal{L}_{\a})=\Big\{f\in L^2(\RR, dw_{{\a}}):
  \sum_{m\in \N^{d}}\big|\ll_{|m|}^\a\,
\langle f,h_m^\a
  \rangle_{dw_{\a}}\big|^2<\infty\Big\},
$$
is a self-adjoint extension of $L_{\a}$ considered on $C^{\infty}_c(\RR)$ as the natural domain (the inclusion $C^{\infty}_c(\RR)\subset\domain(\mathcal{L}_{\a})$ may be easily verified).

The heat semigroup $T_t^\alpha = \exp(-t\mathcal{L}_{\alpha})$, $t\geq 0$, generated by $\mathcal{L}_{\alpha}$ is a strongly continuous semigroup of contractions on
$L^2(\RR, dw_{\alpha})$. By the spectral theorem,
\begin{equation*}
T_t^\alpha f=\sum_{n=0}^\infty e^{-t\ll_{n}^{\a}}\sum_{|m|=n}\langle f,h_m^{\alpha}
  \rangle_{dw_{\alpha}}h_m^{\alpha}, 
\qquad f\in L^2(\RR,dw_{\alpha}).
\end{equation*}
We have the integral representation
\begin{equation*}
  T_t^\alpha f(x)=\int_{\RR} G_t^\alpha(x,y)f(y)\,dw_{\alpha}(y),
  \qquad x\in \RR, \quad t>0,
\end{equation*}
where the Dunkl heat kernel is given by
\begin{equation}\label{gie}
G^\alpha_t(x,y)=\sum_{n=0}^\infty e^{-t\ll_{n}^{\a}} \sum_{|m|=n}
h_m^\alpha(x)h_m^\alpha(y).
\end{equation}
This oscillating series can be summed, see for instance \cite[p.\,544]{NS3} or \cite[p.\,5]{NS2}, and the resulting formula is
\begin{align*}
G^{\a}_{t}(x,y)=\sum_{\eps\in\Z}G^{\a,\eps}_{t}(x,y),
\end{align*}
with the component kernels
$$
G^{\a,\eps}_{t}(x,y)=(2\sinh 2t)^{-d}\exp\Big({-\frac{1}{2} \coth(2t)\big(|x|^{2}+|y|^{2}\big)}\Big)
\prod^{d}_{i=1} (x_i y_i)^{\eps_{i}} \frac{I_{\alpha_i+\eps_{i}}\left(\frac{x_i y_i}{\sinh 2t}\right)}{(x_i y_i)^{\a_{i}+\eps_{i}}},
$$
where $I_{\nu}$ denotes the modified Bessel function of the first kind and order $\nu$. Here we consider the functions $z\mapsto z^{\nu}$ and the Bessel function as analytic functions on $\mathbb{C}$ cut along the half axis $\{ix : x\leq 0\}$, see the references given above. Note that $G_{t}^{\a,\eps}(x,y)$ is also expressed by the series \eqref{gie}, but with the summation in $m$ restricted to the set
$$
\mathcal{N}_{\eps} = \big\{ m \in \N^{d} : m_i \; \textrm{is even if} \; \eps_i=0,
	\; m_i \; \textrm{is odd if}\; \eps_i=1,\; i=1,\ldots,d\big\}.
$$
The operators determined by integration against $G_t^{\a,\eps}(x,y)dw_{\a}(y)$, $\eps\in\Z$, will be denoted by $T_t^{\a,\eps}$. Clearly, we have the decomposition
\begin{align}\label{dec}
T_t^\a=\sum_{\eps\in\Z}T_t^{\a,\eps}.
\end{align}

We consider the following vertical and horizontal square functions based on the Dunkl heat semigroup:
\begin{displaymath}
\begin {array}{lll}
g_{V}(f)(x)&=
\big\|\partial_{t}T_{t}^{\a}f(x)\big\|_{L^{2}(tdt)},&\\
g_{H}^{j}(f)(x)&=
\big\|\delta_{j}T_{t}^{\a}f(x)\big\|_{L^{2}(dt)},\qquad&
j=1,\ldots,d,\\
g_{H,*}^{j}(f)(x)&=
\big\|\delta_{j}^{*}T_{t}^{\a}f(x)\big\|_{L^{2}(dt)},\qquad&
j=1,\ldots,d,\\
S_{V}(f)(x)&=
\Big(\int_{A(x)}t\big|\partial_{t}T_{t}^{\a}f(z)\big|^{2}\frac{dw_{\a}(z)}
{V_{\sqrt{t}}^{\a}(x)}\,dt\Big)^{1\slash 2},&\\
S_{H}^{j}(f)(x)&=
\Big(\int_{A(x)}\big|\delta_{j}T_{t}^{\a}f(z)\big|^{2}\frac{dw_{\a}(z)}
{V_{\sqrt{t}}^{\a}(x)}\,dt\Big)^{1\slash 2},\qquad&
j=1,\ldots,d,\\
S_{H,*}^{j}(f)(x)&=
\Big(\int_{A(x)}\big|\delta_{j}^{*}T_{t}^{\a}f(z)\big|^{2}\frac{dw_{\a}(z)}
{V_{\sqrt{t}}^{\a}(x)}\,dt\Big)^{1\slash 2},\qquad&
j=1,\ldots,d,
\end {array}
\end{displaymath}
where $A(x)$ is the parabolic cone with vertex at $x$,
$$
A(x)=(x,0)+A,\qquad 
A=\Big\{(z,t)\in \RR\times(0,\infty) : |z|<\sqrt{t}\Big\},
$$
and $V_{t}^{\a}(x)$ is the $w_{\a}$ measure of the cube centered at $x$ and with side lengths $2t$. More precisely,
\begin{align*}
V_{t}^{\a}(x)=\prod_{j=1}^{d}V_{t}^{\a_{j}}(x_j),\qquad 
V_{t}^{\a_j}(x_j)=w_{\a_{j}}\big((x_{j}-t,x_{j}+t)\big),\qquad x\in\RR,\quad t>0.
\end{align*}
The above definitions of $S_{V}$, $S_{H}^{j}$, $S_{H,*}^{j}$ fit into a general concept of Lusin's area integrals in a context of spaces of homogeneous type; see for instance \cite[(2.10)]{HLMMY} or \cite[Section 1]{BMR}.
It is not hard to see that the area type integrals just defined can be written as
\begin{displaymath}
\begin {array}{lll}
S_{V}(f)(x)&=
\Big\|\partial_{t}T_{t}^{\a}f(x+z)\sqrt{\frac{w_{\a}(x+z)}
{V_{\sqrt{t}}^{\a}(x)}}\Big\|_{L^2(A,tdtdz)},&\\
S_{H}^{j}(f)(x)&=
\Big\|\delta_{j}T_{t}^{\a}f(x+z)\sqrt{\frac{w_{\a}(x+z)}
{V_{\sqrt{t}}^{\a}(x)}}\Big\|_{L^2(A,dtdz)},\qquad&
j=1,\ldots,d,\\
S_{H,*}^{j}(f)(x)&=
\Big\|\delta_{j}^{*}T_{t}^{\a}f(x+z)\sqrt{\frac{w_{\a}(x+z)}
{V_{\sqrt{t}}^{\a}(x)}}\Big\|_{L^2(A,dtdz)},\qquad&
j=1,\ldots,d.
\end {array}
\end{displaymath}

Our main result concerns mapping properties of the square functions under consideration.

\begin{thm}\label{main}
Assume that $\a\in[-1\slash 2,\infty)^{d}$ and $W$ is a weight on $\RR$ invariant under the reflections $\sigma_1,\ldots,\sigma_d$. Then each of the square functions 
\begin{align*}
g_{V},\quad g_{H}^{j},\quad g_{H,*}^{j},\quad S_{V},\quad S_{H}^{j},\quad S_{H,*}^{j},\qquad j=1,\ldots,d,
\end{align*}
is bounded on $L^{p}(\RR,Wdw_{\a})$, $W^{+}\in A_{p}^{\a,+}$, $1<p<\infty$, and from $L^{1}(\RR,Wdw_{\a})$ to weak $L^{1}(\RR,Wdw_{\a})$, $W^{+}\in A_{1}^{\a,+}$.
\end{thm}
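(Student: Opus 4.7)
The plan is to follow the route signalled in the introduction: first reduce the problem on $(\RR,dw_\a)$ to the first octant $(\R,dw_\a^+,|\cdot|)$ via the reflection decomposition \eqref{dec}, and then treat the resulting Laguerre-type square functions as vector-valued Calder\'on-Zygmund operators on that space of homogeneous type. More precisely, since $W$ is $G$-invariant, one can use the orbit structure under the reflection group together with the splitting $T_t^\a = \sum_{\eps\in\Z} T_t^{\a,\eps}$ to show that each square function on $\RR$ is controlled by a finite sum of Laguerre-type square functions built from the component kernels $G_t^{\a,\eps}$ acting on $L^p(\R,W^+ dw_\a^+)$. Once this reduction is carried out, the weights $W$ on $\RR$ correspond exactly to weights $W^+\in A_p^{\a,+}$ on $\R$, so it suffices to obtain the weighted $L^p$ and weak $L^1$ estimates for the Laguerre-type square functions on $(\R,dw_\a^+,|\cdot|)$.

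Next, I would cast each Laguerre-type square function as a $\B$-valued linear operator with $\B$ chosen to be, respectively, $L^2(tdt)$, $L^2(dt)$, $L^2(A,tdtdz)$ or $L^2(A,dtdz)$, possibly with a weight incorporating the factor $\sqrt{w_\a(x+z)/V_{\sqrt{t}}^\a(x)}$ so that the inner norm becomes $x$-translation invariant in the relevant sense. Two things then have to be verified: first, $L^2(dw_\a^+)$-boundedness of these $\B$-valued operators, and second, that they are associated with $\B$-valued kernels obeying standard Calder\'on-Zygmund growth and smoothness estimates on $(\R,dw_\a^+,|\cdot|)$. The $L^2$-bound is the softer part: it follows from spectral calculus for $\mathcal{L}_\a$ together with the creation/annihilation relations for $\delta_j,\delta_j^*$ on generalized Hermite functions, and for the area integrals one additionally uses Fubini plus the doubling property $V_{\sqrt t}^\a(x+z)\simeq V_{\sqrt t}^\a(x)$ for $|z|<\sqrt t$ to reduce the tent-integral to an integral in $t$ alone and thereby to the $g$-function case.

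The hard part, and the bulk of the technical work, is proving the $\B$-valued kernel estimates: the growth bound $\|K(x,y)\|_\B \lesssim w_\a^+(B(x,|y-x|))^{-1}$ and the Lipschitz/Ho\"rmander-type smoothness bound in each variable, uniformly in $\eps\in\Z$. I would start from the explicit closed form of $G_t^{\a,\eps}(x,y)$ in terms of modified Bessel functions and apply the Sasso/Nowak-Stempak technique: using a suitable integral representation of $I_\nu$, the kernels and all of their relevant derivatives (in $x$, $y$, $t$, and after the actions of $\delta_j$, $\delta_j^*$, and $\partial_t$) can be written as integrals of elementary Gaussian-like expressions against a probability measure, which then allows pointwise estimates to be transferred under the outer integral. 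The new and genuinely more delicate point in the present paper is the area integrals: the inner norm $\|\cdot\|_\B$ is taken over the parabolic region $A\subset\R\times(0,\infty)$, so one has to track the estimates jointly in $t$ and in $z$ with $|z|<\sqrt t$, and must handle the $x$-dependent normalizing factor $V_{\sqrt t}^\a(x)^{-1}$ by exploiting the doubling of $dw_\a^+$. I expect the matching of these $z$-dependent estimates with integrability in $(t,z)\in A$ to be the main obstacle and to require generalizations of the technique of \cite{NS1} beyond the form used in \cite{Sz}. Once the kernel bounds are in place, standard vector-valued weighted Calder\'on-Zygmund theory on the space of homogeneous type $(\R,dw_\a^+,|\cdot|)$ delivers the $L^p(W^+ dw_\a^+)$-boundedness for $W^+\in A_p^{\a,+}$, $1<p<\infty$, and the weak $L^1$ estimate for $W^+\in A_1^{\a,+}$, which in view of the initial reduction gives Theorem \ref{main}.
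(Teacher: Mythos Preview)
Your outline is correct and follows essentially the same route as the paper: reduction to $(\R,dw_\a^+)$ via the $\Z$-decomposition of $T_t^\a$ and the $\eps$-symmetric splitting of $f$, then verification that the resulting Laguerre-type square functions are vector-valued Calder\'on--Zygmund operators (spectral $L^2$-bound plus standard kernel estimates obtained from the Sasso/Nowak--Stempak integral representation of $G_t^{\a,\eps}$), with the area integrals handled by carrying the extra $(z,t)$-variable through the estimates and controlling the factor $\varphi_\a(x,z,t)$. The only small deviation is in the $L^2$-bound for the area integrals: the paper does not invoke doubling of $V_{\sqrt t}^{\a,+}$ but instead integrates out the $x$-variable directly via Fubini and the elementary one-dimensional bounds on $\int \chi_{\{|x_j-z_j|<\sqrt t\}}\,x_j^{2\a_j+1}(x_j+\sqrt t)^{-2\a_j-1}\,dx_j/\sqrt t$, thereby reducing to the $g$-function case; your doubling argument would work equally well.
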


Proving Theorem \ref{main} can be reduced to showing similar mapping properties for certain square functions emerging from those defined above and related to the restricted space $(\R,dw_\a^+)$; recall that $w_{\a}^{+}$ is the restriction of $w_{\a}$ to $\R$. The details are as follows. For $\eps\in\Z$, we consider the operators acting on $L^2(\R,dw_{\a}^{+})$ and defined by
\begin{align*}
T_{t}^{\a,\eps,+}f=
\sum_{n=0}^{\infty}e^{-t\ll_{n}^{\a}}
\sum_{\substack{|m|=n\\ m\in \mathcal{N}_{\eps}}}
\langle f,h_{m}^{\a}\rangle_{dw_{\a}^{+}}h_{m}^{\a},\qquad f\in L^{2}(\R,dw_{\a}^{+}).
\end{align*}
The integral representation of $T_t^{\a,\eps,+}$ is
\begin{align*}
T_t^{\alpha,\eps,+} f(x)=\int_{\R} G_t^{\alpha,\eps}(x,y)f(y)\,dw_{\alpha}^{+}(y),
  \qquad x\in \R, \quad t>0.
\end{align*} 
The estimates
\begin{align}\label{oszh1}
|h_{m}^{\a}(x)|
&\lesssim 
(|m|+1)^{c_{d,\a}}
,\qquad m\in \N^{d},\quad x\in\R,\\\label{oszh2}
|\langle f,h_{m}^{\a}\rangle_{dw_{\a}^{+}}|
&\lesssim
\big(|m|+1\big)^{c_{d,\a,p}}\|f\|_{L^{p}(\R,Udw_{\a}^{+})},\qquad m\in \N^{d},
\end{align}
which hold for general $f\in L^p(\R,Udw_\a^+)$, $U\in A_{p}^{\a,+}$, $1\leq p<\infty$,
allow to check that for each $\eps\in\Z$ the series defining $T_{t}^{\a,\eps,+}$ converges pointwise for such $f$ and produces a smooth function of $(t,x)\in(0,\infty)\times\R$. An analogous claim is true for the integral representation.
The bound \eqref{oszh1} is a consequence of Muckenhoupt's generalization \cite{Mu} of the classical estimates for the standard Laguerre functions due to Askey and Wainger \cite{AW}. Actually, those estimates imply a sharper version of \eqref{oszh1} that involves some exponential decay in $x$, which together with the arguments from the proof of \cite[Lemma 4.2]{No} justifies \eqref{oszh2}.

Next, we define the Laguerre-type square functions
\begin{displaymath}
\begin {array}{lll}
g_{V}^{\eps,+}(f)(x)&=
\big\|\partial_{t}T_{t}^{\a,\eps,+}f(x)\big\|_{L^{2}(tdt)},&\\
g_{H}^{j,\eps,+}(f)(x)&=
\big\|\delta_{j}T_{t}^{\a,\eps,+}f(x)\big\|_{L^{2}(dt)},\qquad&
j=1,\ldots,d,\\
g_{H,*}^{j,\eps,+}(f)(x)&=
\big\|\delta_{j}^{*}T_{t}^{\a,\eps,+}f(x)\big\|_{L^{2}(dt)},\qquad&
j=1,\ldots,d,\\
S_{V}^{\eps,+}(f)(x)&=
\Big(\int_{A(x)}t\big|\partial_{t}T_{t}^{\a,\eps,+}f(z)\big|^{2}
\chi_{\{z\in\R\}}
\frac{dw_{\a}^{+}(z)}{V_{\sqrt{t}}^{\a,+}(x)}
\,dt\Big)^{1\slash 2},&\\
S_{H}^{j,\eps,+}(f)(x)&=
\Big(\int_{A(x)}\big|\delta_{j}T_{t}^{\a,\eps,+}f(z)\big|^{2}
\chi_{\{z\in\R\}}
\frac{dw_{\a}^{+}(z)}{V_{\sqrt{t}}^{\a,+}(x)}
\,dt\Big)^{1\slash 2},\qquad&
j=1,\ldots,d,\\
S_{H,*}^{j,\eps,+}(f)(x)&=
\Big(\int_{A(x)}\big|\delta_{j}^{*}T_{t}^{\a,\eps,+}f(z)\big|^{2}
\chi_{\{z\in\R\}}
\frac{dw_{\a}^{+}(z)}{V_{\sqrt{t}}^{\a,+}(x)}
\,dt\Big)^{1\slash 2},\qquad&
j=1,\ldots,d.
\end {array}
\end{displaymath}
Here $V_{t}^{\a,+}(x)$ denotes the $w_{\a}^{+}$ measure of the cube centered at $x$ and with side lengths $2t$, restricted to $\R$. More precisely,
\begin{align}\label{defV+}
V_{t}^{\a,+}(x)=\prod_{j=1}^{d}V_{t}^{\a_{j},+}(x_j),\qquad x\in\R,\quad t>0,
\end{align}
and for $j=1,\ldots,d$,
\begin{align*}
V_{t}^{\a_j,+}(x_j)
=
w_{\a_{j}}^{+}\big((x_{j}-t,x_{j}+t)\cap\mathbb{R}_{+}\big)
=
\left\{ \begin{array}{ll}
\frac{(x_{j}+t)^{2\a_j+2}}{2\a_j+2}, & x_j<t\\
\frac{(x_{j}+t)^{2\a_j+2}-(x_{j}-t)^{2\a_j+2}}{2\a_j+2}, & x_j\geq t
\end{array} \right..
\end{align*}
Notice that 
\begin{align}\label{esv}
V_{t}^{\a,+}(x)\simeq t^{d}\prod_{j=1}^{d}(x_{j}+t)^{2\a_{j}+1},
\qquad x\in\R,\quad t>0.
\end{align}
Observe also that the Laguerre-type Lusin's area integrals can be written as
\begin{displaymath}
\begin {array}{lll}
S_{V}^{\eps,+}(f)(x)&=
\big\|\partial_{t}T_{t}^{\a,\eps,+}f(x+z)\sqrt{\v_{\a}(x,z,t)}
\,\chi_{\{x+z\in\R\}}\big\|_{L^2(A,tdtdz)},&\\
S_{H}^{j,\eps,+}(f)(x)&=
\big\|\delta_{j}T_{t}^{\a,\eps,+}f(x+z)\sqrt{\v_{\a}(x,z,t)}
\,\chi_{\{x+z\in\R\}}\big\|_{L^2(A,dtdz)},\qquad&
j=1,\ldots,d,\\
S_{H,*}^{j,\eps,+}(f)(x)&=
\big\|\delta_{j}^{*}T_{t}^{\a,\eps,+}f(x+z)\sqrt{\v_{\a}(x,z,t)}
\,\chi_{\{x+z\in\R\}}\big\|_{L^2(A,dtdz)},\qquad&
j=1,\ldots,d,
\end {array}
\end{displaymath}
where the function $\v_{\a}$ is given by
\begin{equation}\label{deffi}
\v_{\a}(x,z,t)=\prod_{j=1}^{d}
\frac{(x_{j}+z_{j})^{2\a_{j}+1}}{V_{\sqrt{t}}^{\a_{j},+}(x_{j})},\qquad x\in\R,\quad z\in\RR,\quad x+z\in\R.
\end{equation}
We are now in a position to reduce the proof of Theorem \ref{main} to showing the following.

\begin{thm}\label{main+}
Assume that $\a\in[-1\slash 2,\infty)^{d}$ and $\eps\in\Z$. Then each of the Laguerre-type square functions 
\begin{align*}
g_{V}^{\eps,+},\quad g_{H}^{j,\eps,+},\quad g_{H,*}^{j,\eps,+},\quad
S_{V}^{\eps,+},\quad S_{H}^{j,\eps,+},\quad S_{H,*}^{j,\eps,+},\qquad
j=1,\ldots,d,
\end{align*}
is bounded on $L^{p}(\R,U dw_{\a}^{+})$, $U\in A_{p}^{\a,+}$, $1<p<\infty$, and from $L^1(\R,U dw_{\a}^{+})$ to weak $L^{1}(\R,U dw_{\a}^{+})$, $U\in A_{1}^{\a,+}$.
\end{thm}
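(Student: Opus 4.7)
The plan is to realize each of the six Laguerre-type square functions as a vector-valued Calder\'on-Zygmund operator on the space of homogeneous type $(\R,dw_{\a}^{+},|\cdot|)$ and then invoke the standard weighted theory of such operators. For the $g$-type functions the target Hilbert space $\B$ is $L^{2}((0,\infty),tdt)$ in the case of $g_{V}^{\eps,+}$ and $L^{2}((0,\infty),dt)$ in the case of $g_{H}^{j,\eps,+}$, $g_{H,*}^{j,\eps,+}$; the associated $\B$-valued kernels are $\partial_{t}G_{t}^{\a,\eps}(x,y)$, $\delta_{j,x}G_{t}^{\a,\eps}(x,y)$ and $\delta_{j,x}^{*}G_{t}^{\a,\eps}(x,y)$, respectively, regarded as functions of $(x,y)\in\R\times\R$, $x\neq y$. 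For the area-type functions the alternative $L^{2}(A,\cdot)$-representation displayed in Section \ref{pre} reveals the very same structure, now with the larger Hilbert space $\B=L^{2}(A,tdtdz)$ or $L^{2}(A,dtdz)$ and with the kernel obtained from the $g$-function kernel by replacing $G_{t}^{\a,\eps}(x,y)$ with
$$
G_{t}^{\a,\eps}(x+z,y)\sqrt{\v_{\a}(x,z,t)}\,\chi_{\{x+z\in\R\}}.
$$

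The first ingredient, $L^{2}(\R,dw_{\a}^{+})$-boundedness of each operator, is elementary. For the $g$-type objects, Plancherel in the basis $\{h_{m}^{\a}:m\in\mathcal{N}_{\eps}\}$ together with the spectral identity $L_{\a}h_{m}^{\a}=\ll_{|m|}^{\a}h_{m}^{\a}$ and the known action of $\delta_{j}$, $\delta_{j}^{*}$ on generalized Hermite functions (which send $h_{m}^{\a}$ to a scalar multiple of $h_{m\mp e_{j}}^{\a}$ lying in a neighbouring $\eps$-class) reduces the question to a uniform-in-$n$ bound on scalar integrals such as $\int_{0}^{\infty}(\ll_{n}^{\a})^{2}e^{-2t\ll_{n}^{\a}}tdt$, all trivially controlled. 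The $L^{2}$-boundedness of the area functions then follows from the $g$-function case by Fubini, after observing that $w_{\a}^{+}(x+z)/V_{\sqrt{t}}^{\a,+}(x)\lesssim 1$ on the cone $A$ so that $\v_{\a}(x,z,t)\lesssim 1$ there. With $L^{2}$-boundedness in hand, the vector-valued CZ theory on $(\R,dw_{\a}^{+},|\cdot|)$ produces the desired weighted $L^p$- and weak-type $(1,1)$-bounds provided the $\B$-valued kernel $K(x,y)$ satisfies the standard growth bound
$$
\|K(x,y)\|_{\B}\lesssim \frac{1}{w_{\a}^{+}(B(x,|y-x|))}
$$
and H\"older-type smoothness in each of $x$ and $y$ with some fixed exponent $\gamma>0$.

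The main obstacle is the proof of these kernel estimates, to be carried out in Sections \ref{o+s} and \ref{ker}. My approach is to follow and suitably extend the Sasso/Nowak-Stempak technique: insert a Poisson-type integral representation of $I_{\a_{j}+\eps_{j}}$ into the explicit product formula for $G_{t}^{\a,\eps}(x,y)$, change variables in $t$ so as to collapse the hyperbolic factors $\sinh 2t$, $\coth 2t$ into a manageable one-parameter family, and then compute $\B$-norms of the resulting expressions. Differences in $x$ or $y$ are handled by the Mean Value Theorem, which shifts the analysis onto first-order derivatives of integrands of exactly the same type, while the final pointwise estimates are matched against the $w_{\a}^{+}$-doubling measure through the explicit comparison \eqref{esv} for $V_{\sqrt{t}}^{\a,+}(x)$. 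The genuinely new technical layer compared with \cite{Sz} concerns the area-type kernels: one has to integrate the $g$-function-style kernel also in $z$ against $\v_{\a}(x,z,t)\,dz$ over $A$, keeping the H\"older estimate in $x$ uniform in $z$. Here the product structure of $V_{\sqrt{t}}^{\a,+}(x)$ lets us balance the $(x_{j}+z_{j})^{2\a_{j}+1}$ factors in $\v_{\a}$ against the pointwise decay of the Bessel-weighted integrand, so that the $z$-integration contributes only a bounded factor that is absorbed into the doubling normalization. Carrying out this program uniformly in $\eps\in\Z$ and simultaneously for all six square functions, with a common positive H\"older exponent, is the longest and most delicate part of the proof.
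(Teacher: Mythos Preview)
Your outline follows the paper's strategy exactly: each square function is interpreted as a vector-valued Calder\'on--Zygmund operator on $(\R,dw_{\a}^{+},|\cdot|)$ (the paper's Theorem~\ref{CZ}, split into Propositions~\ref{preogr}, \ref{prestow} and Theorem~\ref{kes}), and then the standard weighted theory yields the conclusion.

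One correction is needed in your $L^2$ argument for the area integrals. The pointwise bound $\v_{\a}(x,z,t)\lesssim 1$ on the cone is false; from \eqref{esv} one only has $\v_{\a}(x,z,t)\lesssim t^{-d/2}$, and this alone does not reduce $S_{V}^{\eps,+}$ to $g_{V}^{\eps,+}$ because the integrand still depends on $z$. The right step is Fubini in the \emph{outer} variable: starting from the original definition of $S_{V}^{\eps,+}$ (with cone vertex $x$ and integration variable $z\in\R$), one switches the $dw_{\a}^{+}(x)$ and $dw_{\a}^{+}(z)\,dt$ integrations and uses
\[
\int_{\R}\chi_{\{|x-z|<\sqrt t\}}\,\frac{dw_{\a}^{+}(x)}{V_{\sqrt t}^{\a,+}(x)}\simeq 1
\]
uniformly in $z\in\R$ and $t>0$, which follows from \eqref{esv}. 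This is precisely Proposition~\ref{ogr2} in the paper and gives $\|S_{V}^{\eps,+}(f)\|_{L^2}\simeq\|g_{V}^{\eps,+}(f)\|_{L^2}$.
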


For the sake of brevity, we give a detailed description of the reduction only in the case of $S_{H}^{j}$, adapting suitably arguments from the proof of \cite[Theorem 1]{NS2}. The remaining cases are treated in a similar way and the cases of $g_{V}$, $g_{H}^{j}$ and $g_{H,*}^{j}$ are even simpler.
In what follows, we shall use the following terminology. Given $\eps\in\Z$, we say that a function $f\colon\RR\to\mathbb{C}$ is $\eps$-symmetric if for each $j=1,\ldots,d$, $f$ is either even or odd with respect to the $j$th coordinate according to whether $\eps_{j}=0$ or $\eps_{j}=1$, respectively. If $f$ is $(0,\ldots,0)$-symmetric, then we simply say that $f$ is symmetric. Furthermore, if there exists $\eps\in\Z$ such that $f$ is $\eps$-symmetric, then we denote by $f^{+}$ the restriction of $f$ to $\R$. This convention pertains also to $\eps$-symmetric weights defined on $\RR$.

Let $j\in\{1,\ldots,d\}$ and $1\leq p<\infty$ be fixed, and let $W$ be a symmetric weight on $\RR$ such that $W^+\in A_p^{\a,+}$. According to \eqref{dec}, we decompose $\delta_{j}T_{t}^{\a}$ into a finite sum,
\begin{align*}
\delta_{j}T_{t}^{\a} f=
\sum_{\eps\in\Z}\delta_{j}T_{t}^{\a,\eps} f.
\end{align*}
Next, we invoke the differentiation rule (see \cite[(4.4)]{NS3})
\begin{align*}
\delta_j h_m^\a=\Phi(m_j,\a_j) h_{m-e_j}^\a,
\end{align*}
where
\begin{align*}
\Phi(m_{j},\a_{j})=
\left\{ \begin{array}{ll}
\sqrt{2m_{j}} & \textrm{if} \; m_{j} \; \textrm{is even}\\
\sqrt{2m_{j}+4\a_{j}+2} & \textrm{if} \; m_{j} \; \textrm{is odd}
\end{array} \right. ;
\end{align*}
here and elsewhere we use the convention that $h_{m}^{\a}=0$ if $m\notin \N^{d}$. Then, in view of the estimates similar to \eqref{oszh1} and \eqref{oszh2}, but adjusted to the space $(\RR,dw_\a,|\cdot|)$, we may write
\begin{align*}
\delta_{j}T_{t}^{\a,\eps} f=
\sum_{n=0}^{\infty}e^{-t\ll_{n}^{\a}}
\sum_{\substack{|m|=n\\ m\in \mathcal{N}_{\eps}}}
\langle f,h_{m}^{\a}\rangle_{dw_{\a}}
\Phi(m_{j},\a_{j})h_{m-e_{j}}^{\a},\qquad f\in L^p(\RR,Wdw_{\a}).
\end{align*}
Proceeding as in \cite[Section 3]{NS2}, we split a function $f\in L^{p}(\RR,Wdw_{\a})$ into a sum of $\eps$-symmetric functions $f_{\eps}$,
$$
f = \sum_{\varepsilon \in \mathbb{Z}_2^d} f_{\varepsilon}, \qquad \qquad
	f_{\varepsilon}(x) = \frac{1}{2^d} \sum_{\eta \in \{-1,1\}^d} \eta^{\varepsilon} f(\eta x),
$$
where $\eta^{\eps}=\eta_1^{\eps_1}\cdot\ldots\cdot\eta_d^{\eps_d}$ and $\eta x=(\eta_1 x_1,\ldots,\eta_d x_d)$.
Since $h_{m}^{\a}$ is $\eps$-symmetric if and only if $m\in\mathcal{N}_{\eps}$, we see that
\begin{align}\label{eqt}
\delta_{j}T_{t}^{\a} f
=\sum_{\eps\in\Z}\delta_{j}T_{t}^{\a,\eps} f
=\sum_{\eps\in\Z}\delta_{j}T_{t}^{\a,\eps} f_{\eps},
\end{align}
and the function $\delta_{j}T_{t}^{\a,\eps} f_{\eps}$ is $(\eps\pm e_{j})$-symmetric, depending on whether $\eps_{j}=0$ or $\eps_{j}=1$. 

Consider the auxiliary square functions $\mathcal{S}_{H}^{j,\eps}$, $\eps\in\Z$, acting on functions on $\RR$ and defined by
\begin{align*}
\mathcal{S}_{H}^{j,\eps} h(x)=
\bigg(\int_{A}\big|\delta_{j}T_{t}^{\a,\eps}h(x+z)\big|^{2}
\frac{w_{\a}(x+z)}{V_{\sqrt{t}}^{\a}(x)}
\,dz\,dt\bigg)^{1\slash 2}.
\end{align*}
Since $|\delta_{j}T_{t}^{\a,\eps} f_{\eps}|$ and $w_{\a}$ are symmetric, and $A$ is a symmetric set, it follows that $\mathcal{S}_{H}^{j,\eps}f_{\eps}$ is also symmetric. Moreover, by \eqref{eqt} we see that
\begin{align*}
S_{H}^{j} (f)(x)\leq\sum_{\eps\in\Z}\mathcal{S}_{H}^{j,\eps}f_{\eps}(x).
\end{align*}
Now, by the inclusions 
\begin{align*}
\big\{z\in\RR : |z-x|<\sqrt{t} \big\}\subset 
\Big(\bigcup_{\eta\in\Z}
\big\{z\in\mathbb{R}_{\eta} : |z-\sigma^{\eta}(x)|<\sqrt{t}\big\}\Big)
\cup M,\qquad x\in\R,\quad t>0, 
\end{align*}
where
\begin{align*}
M=&\big\{z\in\RR : \; \textrm{there exists} \; i\in\{1,\ldots,d\} \; \textrm{such that} \; z_{i}=0\big\},\\
\mathbb{R}_{\eta}=&\big\{z\in\RR : z_{i}>0 \; \textrm{if} \; \eta_i=0,
\; z_{i}<0 \; \textrm{if} \; \eta_i=1, \; i=1,\ldots,d \big\},
\end{align*}
and $\sigma^{\eta}=\sigma_{1}^{\eta_{1}}\circ\ldots\circ\sigma_{d}^{\eta_{d}}$, we get for any $x\in\R$,
\begin{align*}
\big(\mathcal{S}_{H}^{j,\eps} f_{\eps}(x)\big)^{2}&=
\int_{|z-x|<\sqrt{t}}\big|\delta_{j}T_{t}^{\a,\eps}f_{\eps}(z)\big|^{2}
\frac{w_{\a}(z)}{V_{\sqrt{t}}^{\a}(x)}\,dz\,dt\\
&\leq
\sum_{\eta\in\Z}
\int_{|z-\sigma^{\eta}(x)|<\sqrt{t}}\big|\delta_{j}T_{t}^{\a,\eps}f_{\eps}(z)\big|^{2}
\frac{w_{\a}(z)}{V_{\sqrt{t}}^{\a}(x)}\,\chi_{\{z\in\mathbb{R}_{\eta}\}}\,dz\,dt,
\end{align*}
since $M$ has the Lebesgue measure $0$. Then the change of variable $z\mapsto \sigma^{\eta}(z)$ reveals that
\begin{align*}
\big(\mathcal{S}_{H}^{j,\eps} f_{\eps}(x)\big)^{2}
\leq
2^{d}
\int_{|z-x|<\sqrt{t}}\big|\delta_{j}T_{t}^{\a,\eps}f_{\eps}(z)\big|^{2}
\frac{w_{\a}(z)}{V_{\sqrt{t}}^{\a}(x)}\,\chi_{\{z\in\R\}}\,dz\,dt.
\end{align*}
Thus, in view of the above estimates, the inequality $V_{\sqrt{t}}^{\a,+}(x)\leq V_{\sqrt{t}}^{\a}(x)$ and the fact that for each $m\in\mathcal{N}_{\eps}$ we have $\langle f_{\eps}, h_{m}^{\a}\rangle_{dw_{\a}}=2^{d}\langle f_{\eps}^{+}, h_{m}^{\a}\rangle_{dw_{\a}^{+}}$ and consequently $\delta_j T_t^{\a,\eps}f_\eps=2^{d}\delta_j T_t^{\a,\eps,+}(f_\eps^{+})$ on $\R$, we get
\begin{align*}
\mathcal{S}_{H}^{j,\eps}f_{\eps}(x)
\leq
2^{{3d}\slash {2}}
S_{H}^{j,\eps,+}(f_{\eps}^{+})(x),\qquad x\in\R.
\end{align*}
Taking into account the symmetry of $\mathcal{S}_{H}^{j,\eps} f_{\eps}$ and $Wdw_{\a}$, we obtain
\begin{align*}
\|S_{H}^{j} (f)\|_{L^{p}(\RR,Wdw_{\a})}
\leq
2^{d\slash p}\sum_{\eps\in\Z}\|\mathcal{S}_{H}^{j,\eps} f_{\eps}\|_{L^{p}(\R,W^{+}dw_{\a}^{+})}
\lesssim
\sum_{\eps\in\Z}\|S_{H}^{j,\eps,+} (f_{\eps}^{+})\|_{L^{p}(\R,W^{+}dw_{\a}^{+})}
\end{align*}
and similarly
$$
\int_{\left\{x\in\RR : S_{H}^{j} (f)(x)>\lambda \right\}}W(y)\, dw_{\a}(y) \le 2^d \sum_{\eps\in\Z}
\int_{\left\{x\in\R : S_{H}^{j,\eps,+} (f_{\eps}^{+}) (x)>2^{-5d\slash 2}\lambda 
\right\}} W^{+}(y)\, dw_{\a}^{+}(y),
\qquad \lambda >0.
$$
Since we have (see \cite[p.\,6]{NS2} for the unweighted case)
\begin{align*}
\|f\|_{L^p(\RR,Wdw_{\alpha})} \simeq \sum_{\varepsilon \in \mathbb{Z}_2^d} 
\|f^{+}_{\varepsilon}\|_{L^p(\mathbb{R}^d_+,W^{+}dw_{\alpha}^+)},
\end{align*}
this shows that the estimates
\begin{align*}
\|S_{H}^{j,\eps,+} (f_{\eps}^{+})\|_{L^{p}(\R,W^{+}dw_{\a}^{+})}
\lesssim
\|f^{+}_{\varepsilon}\|_{L^p(\mathbb{R}^d_+,W^{+}dw_{\alpha}^+)},\qquad \eps\in\Z,
\end{align*}
imply the estimate
\begin{align*}
\|S_{H}^{j} (f)\|_{L^{p}(\RR,Wdw_{\a})}
\lesssim
\|f\|_{L^{p}(\RR,Wdw_{\a})}.
\end{align*}
and an analogous implication is true for the weighted weak type $(1,1)$ inequalities.

Thus we reduced proving Theorem \ref{main} to showing Theorem \ref{main+}.
The proof of the latter result is based on the general Calder\'on-Zygmund theory.
Clearly, the square functions are not linear, but in the well-known way they can be viewed as vector-valued linear operators, see \cite[Section 2]{Sz}.
In fact, we will show that each of the square functions from Theorem \ref{main+}, viewed as a vector-valued operator, is a Calder\'on-Zygmund operator in the sense of the space of homogeneous type $(\R,dw_{\a}^{+},|\cdot|)$. We shall need a slightly more general version of the Calder\'on-Zygmund theory than the one used in \cite{Sz}. More precisely, here we allow weaker smoothness estimates as indicated below. 

Let $\B$ be a Banach space and $K(x,y)$ be a kernel defined on $\R\times\R\backslash\{(x,y) : x=y\}$ and taking values in $\B$. We say that $K(x,y)$ is a standard kernel in the sense of the space of homogeneous type $(\R,dw_{\a}^{+},|\cdot|)$ if it satisfies the growth estimate
\begin{align}\label{gr}
\|K(x,y)\|_{\B}
&\lesssim
\frac{1}{w_{\alpha}^{+}(B(x,|y-x|))}
\end{align}
and the smoothness estimates
\begin{align}\label{sm1}
\|K(x,y)-K(x',y)\|_{\B}
&\lesssim
\bigg(\frac{|x-x'|}{|x-y|}\bigg)^{\delta} \;
\frac{1}{w_{\alpha}^{+}(B(x,|y-x|))},\qquad |x-y|>2|x-x'|,\\\label{sm2}
\|K(x,y)-K(x,y')\|_{\B}
&\lesssim
\bigg(\frac{|y-y'|}{|x-y|}\bigg)^{\delta} \;
\frac{1}{w_{\alpha}^{+}(B(x,|y-x|))},\qquad |x-y|>2|y-y'|,
\end{align}
for some fixed $\delta>0$; here $B(x,r)$ denotes the ball centered at $x$ and with radius $r$, restricted to $\R$. Notice that the bounds \eqref{sm1} and \eqref{sm2} imply analogous estimates with any $0<\delta '<\delta$ replacing $\delta>0$.

A linear operator $T$ assigning to each $f\in L^2(\R,dw_{\a}^+)$ a measurable $\B$-valued function $Tf$ on $\R$ is a (vector-valued) Calder\'on-Zygmund operator in the sense of the space $(\R,dw_{\alpha}^+,|\cdot|)$ if
\begin{itemize}
    \item[(i)] $T$ is bounded from $L^2(\R,dw_{\a}^+)$ to $L^2_{\B}(\R,dw_{\a}^+)$,
    \item[(ii)] there exists a standard $\B$-valued kernel $K(x,y)$ such that
\begin{align*}
Tf(x)=\int_{\R}K(x,y)f(y)\,dw_{\a}^+(y),\qquad \textrm{a.e.}\,\,\, x\notin \supp f,
\end{align*} 
for every $f\in L^2(\R,dw_{\a}^+)$ vanishing outside a compact set contained in $\R$ (we write shortly $T\sim K(x,y)$ for this kind of association).
\end{itemize}
Here integration of $\B$-valued functions is understood in Bochner's sense, and $L_{\B}^{2}$ is the Bochner-Lebesgue space of all $\B$-valued $dw_{\a}^+$-square integrable functions on $\R$.
It is well known that a large part of the classical theory of Calder\'on-Zygmund operators remains valid, with appropriate adjustments, when the underlying space is of homogeneous type and the associated kernels are vector-valued, see the comments in \cite[p.\,649]{NS1} and references given there.

The following result, combined with the general theory of Calder\'on-Zygmund operators and arguments similar to those from the proof of \cite[Corollary 2.5]{Sz}, implies Theorem \ref{main+}, and thus also Theorem \ref{main} by the reduction reasoning described above.

\begin{thm}\label{CZ}
Assume that $\a\in[-1\slash 2,\infty)^{d}$ and $\eps\in\Z$. Then each of the square functions 
\begin{align*}
g_{V}^{\eps,+},\quad g_{H}^{j,\eps,+},\quad g_{H,*}^{j,\eps,+},\quad
S_{V}^{\eps,+},\quad S_{H}^{j,\eps,+},\quad S_{H,*}^{j,\eps,+},\qquad
j=1,\ldots,d,
\end{align*}
viewed as a vector-valued operator related to either $\B=L^{2}(tdt)$ (the case of $g_{V}^{\eps,+}$), or $\B=L^{2}(dt)$ (the cases of $g_{H}^{j,\eps,+}$ and $g_{H,*}^{j,\eps,+}$), or $\B=L^{2}(A,tdtdz)$ (the case of $S_{V}^{\eps,+}$), or $\B=L^{2}(A,dtdz)$ (the cases of $S_{H}^{j,\eps,+}$ and $S_{H,*}^{j,\eps,+}$), is a Calder\'on-Zygmund operator in the sense of the space of homogeneous type $(\R,dw_{\a}^+,|\cdot|)$.
\end{thm}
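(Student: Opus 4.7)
The plan is to verify, for each of the six families of operators, the two defining conditions of a vector-valued Calder\'on-Zygmund operator: $L^2$-boundedness into the appropriate Bochner space, and association with a standard $\B$-valued kernel satisfying \eqref{gr}--\eqref{sm2}. The candidate kernel is in every case read off by formally applying the relevant differential operator (acting in $x$, or at the shifted point $x+z$ in the area cases) to the component heat kernel $G_t^{\a,\eps}(x,y)$, multiplied in the Lusin area cases by the factor $\sqrt{\v_{\a}(x,z,t)}\,\chi_{\{x+z\in\R\}}$.

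\textbf{$L^2$-boundedness.} For $g_V^{\eps,+}$ this is a direct spectral computation: expanding $f$ in the basis $\{h_m^\a : m\in\mathcal{N}_\eps\}$, orthonormality yields
\[
\|g_V^{\eps,+}f\|_{L^2(dw_\a^+)}^2 = \sum_{m\in\mathcal{N}_\eps} \big|\langle f,h_m^\a\rangle_{dw_\a^+}\big|^2 \int_0^\infty t(\ll_{|m|}^\a)^2 e^{-2t\ll_{|m|}^\a}\,dt = \tfrac{1}{4}\|f\|_{L^2(dw_\a^+)}^2.
\]
For $g_H^{j,\eps,+}$ and $g_{H,*}^{j,\eps,+}$, the differentiation rule $\delta_j h_m^\a = \Phi(m_j,\a_j)h_{m-e_j}^\a$ (and its $\delta_j^*$-counterpart) gives analogous spectral identities, bounded by $\|f\|_{L^2(dw_\a^+)}^2$ because $\Phi(m_j,\a_j)^2 \lesssim \ll_{|m|}^\a$. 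For the area integrals, Fubini and the doubling estimate $V_{\sqrt{t}}^{\a,+}(x)\simeq V_{\sqrt{t}}^{\a,+}(z)$ for $|x-z|<\sqrt{t}$, immediate from \eqref{esv}, reduce their $L^2$-bounds to the $g$-function cases just proved.

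\textbf{Kernel identification.} For $f\in L^2(\R,dw_\a^+)$ with compact support in $\R$ and $x\notin\supp f$, rapid decay of $G_t^{\a,\eps}$ and of its $\partial_t$, $\delta_j$, $\delta_j^*$ derivatives in the $y$-variable justifies pulling the differential operator under the integral sign in $T_t^{\a,\eps,+}f(x)=\int_{\R} G_t^{\a,\eps}(x,y)f(y)\,dw_{\a}^+(y)$. This yields explicit formulas for the $\B$-valued kernels; for example, the candidate for $S_H^{j,\eps,+}$ is
\[
K(x,y)(z,t)=\delta_j G_t^{\a,\eps}(x+z,y)\,\sqrt{\v_\a(x,z,t)}\,\chi_{\{x+z\in\R\}},\qquad (z,t)\in A,
\]
where $\delta_j$ acts in the first argument of $G_t^{\a,\eps}$ before the shift by $z$, and the other five cases are analogous (simpler when no cone is present).

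\textbf{Kernel estimates (the main obstacle).} The bulk of the work is to show that each candidate kernel satisfies \eqref{gr}--\eqref{sm2}. The plan is to follow the Sasso--Nowak--Stempak route alluded to in the introduction: substitute into the explicit formula for $G_t^{\a,\eps}(x,y)$ the Poisson-type integral representation of the modified Bessel function $I_\nu$, so that the kernel becomes an integral over a compact $(\z,s)$-parameter set of Gaussian-type exponentials of the form $\exp(-q_\pm(x,y,s)/\sinh 2t)$ times elementary factors. After applying $\partial_t$, $\delta_j$, or $\delta_j^*$, one obtains pointwise bounds uniform in the parameters. The growth estimate \eqref{gr} then follows by integrating these bounds against $dt$ and the parameter measure, and comparing the outcome with $w_\a^+(B(x,|x-y|))$ via \eqref{esv}. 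The smoothness estimates \eqref{sm1} and \eqref{sm2} are obtained by differentiating once more in $x$ (respectively $y$) under the parameter integral, applying the mean value theorem, and tolerating a small loss in the Gaussian exponent that is absorbed into the growth bound. The genuinely new technical input concerns the area-integral kernels: the cone variable $z$ and the weight $\v_\a$ force the kernel bounds to be proved in a form that remains integrable over $A$ after an $L^2(A,\cdot)$-Minkowski step, and this is where the announced generalizations of the Nowak--Stempak framework enter. This step occupies all of Section \ref{ker} and is, by its length and technicality, the main obstacle.
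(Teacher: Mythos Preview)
Your proposal is correct and follows essentially the same three-part structure as the paper: Proposition \ref{preogr} ($L^2$-boundedness via spectral computation for the $g$-functions and a Fubini reduction for the area integrals), Proposition \ref{prestow} (kernel association, where the paper uses a duality argument with test functions rather than your direct differentiation-under-the-integral, but the two are equivalent here), and Theorem \ref{kes} (the Schl\"afli-representation kernel estimates that you correctly identify as the main technical obstacle). One small slip: the system $\{h_m^\a : m\in\mathcal{N}_\eps\}$ is only orthogonal in $L^2(\R,dw_\a^+)$, with $\|h_m^\a\|^2=2^{-d}$, so your constant $1/4$ in the $g_V^{\eps,+}$ computation should be $2^{-2d-2}$; this does not affect the boundedness conclusion.
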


The proof of Theorem \ref{CZ} splits naturally into proving the following three results. Showing them will complete the whole reasoning justifying Theorem \ref{main}.

\begin{pro}\label{preogr}
Let $\a\in[-1\slash 2,\infty)^{d}$ and $\eps\in\Z$. Then the square functions from Theorem \ref{CZ} are bounded on $L^2(\R,dw_{\a}^{+})$. Consequently, each of them, viewed as a vector-valued operator, is bounded from $L^2(\R,dw^{+}_{\a})$ to $L^2_{\B}(\R,dw^{+}_{\a})$, where $\B$ is as in Theorem \ref{CZ}.
\end{pro}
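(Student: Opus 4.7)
The plan is to prove $L^{2}$-boundedness of the scalar square functions; the stated $L^{2}\to L^{2}_{\B}$ bound for the vector-valued operators will then be a tautological consequence, since by Fubini $\|(Tf)(\cdot)\|_{\B}$ \emph{is} the scalar square function. I would dispatch the $g$-functions first by a direct spectral computation via Plancherel/Bessel, and then deduce the Lusin-type bounds by Fubini combined with a doubling-type estimate on $(\R,dw_\a^+,|\cdot|)$.

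For $g_V^{\eps,+}$, I would differentiate the defining spectral series termwise to get
\[
\partial_t T_t^{\a,\eps,+} f(x) = -\sum_{n=0}^\infty \ll_n^\a e^{-t\ll_n^\a}\sum_{\substack{|m|=n\\ m\in\mathcal{N}_\eps}} \langle f, h_m^\a\rangle_{dw_\a^+}\, h_m^\a(x),
\]
apply the orthogonality of $\{h_m^\a\}_{m\in\mathcal{N}_\eps}$ in $L^{2}(\R,dw_\a^+)$ (each $h_m^\a|_{\R}$ has squared norm $2^{-d}$, since $|h_m^\a|^{2}$ is fully symmetric), and integrate against $t\,dt$. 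Using $\int_0^\infty t\ll^{2}e^{-2t\ll}\,dt = 1/4$ (independent of $\ll>0$), this collapses to
\[
\|g_V^{\eps,+} f\|_{L^{2}(dw_\a^+)}^{2} \lesssim \sum_{m\in\mathcal{N}_\eps}|\langle f,h_m^\a\rangle_{dw_\a^+}|^{2} \lesssim \|f\|_{L^{2}(dw_\a^+)}^{2}.
\]
The horizontal variants are analogous: the rule $\delta_j h_m^\a = \Phi(m_j,\a_j)\, h_{m-e_j}^\a$, together with the dual identity $\delta_j^* h_m^\a = \Phi(m_j+1,\a_j)\, h_{m+e_j}^\a$, places the shifted functions back inside the orthogonal system $\{h_m^\a\}_{m\in\N^d}$. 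Bessel then yields $\|\delta_j T_t^{\a,\eps,+}f\|_{L^{2}}^{2} \lesssim \sum_m e^{-2t\ll_{|m|}^\a}\Phi(m_j,\a_j)^{2}|\langle f,h_m^\a\rangle|^{2}$, and integration $\int_0^\infty e^{-2t\ll}\,dt = 1/(2\ll)$ combined with the elementary bound $\Phi(m_j,\a_j)^{2}\leq 2|m|+4|\a|+2 \lesssim \ll_{|m|}^\a$ gives the $L^{2}$ estimate for $g_H^{j,\eps,+}$ (and likewise $g_{H,*}^{j,\eps,+}$).

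For the Lusin-type integrals I would apply Fubini to swap the $x$-integration with the $(z,t)$-integration. For $S_V^{\eps,+}$ this produces
\[
\|S_V^{\eps,+} f\|_{L^{2}(dw_\a^+)}^{2} = \int_0^\infty \int_\R |\partial_t T_t^{\a,\eps,+} f(z)|^{2}\, \mathcal{W}_{\sqrt t}(z)\, dw_\a^+(z)\, t\,dt,
\]
where $\mathcal{W}_{\sqrt t}(z)=\int_{\{x\in\R:\,|x-z|<\sqrt t\}} (V_{\sqrt t}^{\a,+}(x))^{-1}\,dw_\a^+(x)$. The decisive point is the uniform bound $\mathcal{W}_{\sqrt t}(z)\lesssim 1$: by \eqref{esv}, whenever $|x-z|<\sqrt t$ one has $x_j+\sqrt t\simeq z_j+\sqrt t$ for every $j$, hence $V_{\sqrt t}^{\a,+}(x)\simeq V_{\sqrt t}^{\a,+}(z)$; since the $w_\a^+$-measure of the integration domain is trivially bounded by $V_{\sqrt t}^{\a,+}(z)$, the quotient is $\lesssim 1$. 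Consequently $\|S_V^{\eps,+} f\|_{L^{2}}^{2}\lesssim \|g_V^{\eps,+} f\|_{L^{2}}^{2}\lesssim \|f\|_{L^{2}}^{2}$, and the same reduction handles $S_H^{j,\eps,+}$ and $S_{H,*}^{j,\eps,+}$ via $g_H^{j,\eps,+}$ and $g_{H,*}^{j,\eps,+}$.

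I do not anticipate a substantive obstacle here. The entire argument is spectral plus one elementary geometric doubling lemma, and the only technicalities requiring routine care are the termwise differentiation of the spectral series (justified by \eqref{oszh1}--\eqref{oszh2} on a dense class such as finite linear combinations of the $h_m^\a$, and then extended by density using the $L^{2}$ bounds obtained) and the application of Fubini to nonnegative integrands.
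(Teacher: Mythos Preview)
Your proposal is correct and follows essentially the same route as the paper: the $g$-functions are handled by the spectral/Plancherel computation using the orthonormality of $\{2^{d/2}h_m^\a:m\in\mathcal{N}_\eps\}$ in $L^2(\R,dw_\a^+)$ together with the differentiation rule $\delta_j h_m^\a=\Phi(m_j,\a_j)h_{m-e_j}^\a$, and the Lusin integrals are reduced to the $g$-functions via Fubini plus the estimate $\int_{\{x\in\R:|x-z|<\sqrt t\}}(V_{\sqrt t}^{\a,+}(x))^{-1}\,dw_\a^+(x)\lesssim 1$. The only cosmetic difference is that the paper carries out this last bound coordinate-wise (and also records the matching lower bound, giving $\|S_V^{\eps,+}f\|_{L^2}\simeq\|g_V^{\eps,+}f\|_{L^2}$), whereas you phrase it via the comparability $V_{\sqrt t}^{\a,+}(x)\simeq V_{\sqrt t}^{\a,+}(z)$ for $|x-z|<\sqrt t$; these are the same argument.
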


Formal computations suggest that $S_{V}^{\eps,+}$, $S_{H}^{j,\eps,+}$, $S_{H,*}^{j,\eps,+}$ are associated with the kernels
\begin{align}\label{kv}
K_{z,t}^{\a,\eps,V}(x,y)&=\partial_{t}\big(G_{t}^{\a,\eps}(x+z,y)\big)\sqrt{\v_{\a}(x,z,t)}
\,\chi_{\{x+z\in\R\}},\\\nonumber
K_{z,t}^{\a,\eps,H,j}(x,y)&=\delta_{j,x}\big(G_{t}^{\a,\eps}(x+z,y)\big)\sqrt{\v_{\a}(x,z,t)}
\,\chi_{\{x+z\in\R\}},\qquad j=1,\ldots,d,\\\nonumber
K_{z,t}^{\a,\eps,H,*,j}(x,y)&=\delta_{j,x}^{*}\big(G_{t}^{\a,\eps}(x+z,y)\big)\sqrt{\v_{\a}(x,z,t)}
\,\chi_{\{x+z\in\R\}},\qquad j=1,\ldots,d,
\end{align}
respectively. A part of the next result shows that this is indeed true, at least in the Calder\'on-Zygmund theory sense.

\begin{pro}\label{prestow}
Let $\a\in [-1\slash 2,\infty)^{d}$ and $\eps\in\Z$. Then the square functions from Theorem \ref{CZ}, viewed as vector-valued linear operators related to $\B$ as in Theorem \ref{CZ}, are associated with the following kernels:
\begin{displaymath}
\begin {array}{lll}
g_{V}^{\eps,+}\sim\big\{\partial_{t}G_{t}^{\a,\eps}(x,y)\big\}_{t>0}
,\qquad
&S_{V}^{\eps,+}\sim\big\{K_{z,t}^{\a,\eps,V}(x,y)\big\}_{(z,t)\in A}
,
&\\
g_{H}^{j,\eps,+}\sim\big\{\delta_{j,x}G_{t}^{\a,\eps}(x,y)\big\}_{t>0}
,\qquad
&S_{H}^{j,\eps,+}\sim\big\{K_{z,t}^{\a,\eps,H,j}(x,y)\big\}_{(z,t)\in A}
,
&\qquad j=1,\ldots,d,\\
g_{H,*}^{j,\eps,+}\sim\big\{\delta_{j,x}^{*}G_{t}^{\a,\eps}(x,y)\big\}_{t>0}
,\qquad
&S_{H,*}^{j,\eps,+}\sim\big\{K_{z,t}^{\a,\eps,H,*,j}(x,y)\big\}_{(z,t)\in A}
,
&\qquad j=1,\ldots,d.
\end {array}
\end{displaymath}
\end{pro}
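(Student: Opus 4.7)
The plan is to establish the association formula
\[
T f(x) = \int_{\R} K(x,y) f(y) \, dw_{\a}^{+}(y)
\]
as a Bochner integral with values in $\B$, for every $f \in L^{2}(\R, dw_{\a}^{+})$ supported in a compact set $F \subset \R$ and every $x \in \R \setminus F$. The overall strategy treats all six cases uniformly: start from the pointwise integral representation
\[
T_{t}^{\a,\eps,+} f(x) = \int_{\R} G_{t}^{\a,\eps}(x,y) f(y) \, dw_{\a}^{+}(y),
\]
justify passing $\partial_{t}$, $\delta_{j,x}$, or $\delta_{j,x}^{*}$ under the integral by dominated convergence, and finally promote the resulting pointwise identity to a Bochner identity in $\B$ by a Fubini argument once the relevant norm of the kernel is shown to be integrable in $y$ against $f$.

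For the vertical $g$-function, the component kernel $G_{t}^{\a,\eps}(x,y)$ is jointly smooth on $(0,\infty) \times \R \times \R$, since the Bessel function $I_{\nu}$ is entire and $x_{i} y_{i} > 0$ on $\R$. Consequently $|\partial_{t} G_{t}^{\a,\eps}(x,y)|$ is bounded uniformly in $y \in F$ and in $t$ ranging over any compact subinterval of $(0,\infty)$, for any fixed $x$. Dominated convergence then yields
\[
\partial_{t} T_{t}^{\a,\eps,+} f(x) = \int_{\R} \partial_{t} G_{t}^{\a,\eps}(x,y) f(y) \, dw_{\a}^{+}(y), \qquad t > 0.
\]
To reinterpret this as a Bochner identity in $\B = L^{2}(tdt)$, a standard Fubini argument shows that it suffices to verify
\[
\int_{F} \bigl\| \partial_{\cdot} G_{\cdot}^{\a,\eps}(x,y) \bigr\|_{L^{2}(tdt)} \, |f(y)| \, dw_{\a}^{+}(y) < \infty,
\]
which reduces to a uniform bound on the inner norm for $y \in F$ and $x$ bounded away from $F$. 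The cases of $g_{H}^{j,\eps,+}$ and $g_{H,*}^{j,\eps,+}$ are handled identically, with $\delta_{j,x}$ or $\delta_{j,x}^{*}$ taking the role of $\partial_{t}$: both involve only an $x$-derivative and a reflection in the $j$th coordinate, and therefore commute with the $y$-integration under analogous local bounds.

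The area integrals $S_{V}^{\eps,+}$, $S_{H}^{j,\eps,+}$, $S_{H,*}^{j,\eps,+}$ follow the same template, now with $\B$ an $L^{2}$ space over the parabolic cone $A$. Fixing $x \notin F$, for each $(z,t) \in A$ with $x + z \in \R$ the preceding pointwise interchange applied at $x + z$ in place of $x$ gives
\[
\partial_{t} T_{t}^{\a,\eps,+} f(x+z) \sqrt{\v_{\a}(x,z,t)} \, \chi_{\{x+z \in \R\}} = \int_{\R} K_{z,t}^{\a,\eps,V}(x,y) f(y) \, dw_{\a}^{+}(y),
\]
with $K_{z,t}^{\a,\eps,V}$ as in \eqref{kv}, and analogously for the horizontal area integrals. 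Promoting this to a Bochner identity in $\B = L^{2}(A, tdtdz)$ or $L^{2}(A, dtdz)$ reduces once again, by Fubini, to finiteness of $\int_{F} \| K_{\cdot,\cdot}^{\a,\eps,\bullet}(x,y) \|_{\B} \, |f(y)| \, dw_{\a}^{+}(y)$, where $\bullet$ stands for $V$, $(H,j)$ or $(H,*,j)$. The main obstacle is supplying the requisite qualitative norm bounds, uniform in $y \in F$ and in $x$ on a compact subset of $\R \setminus F$; these follow from the joint smoothness of $G_{t}^{\a,\eps}$ in $(t,x,y) \in (0,\infty) \times \R \times \R$, exponential decay as $t \to \infty$ coming from the spectral series restricted to $\mathcal{N}_{\eps}$ (the lowest eigenvalue $\ll_{0}^{\a}$ being strictly positive), and integrability of the $t \to 0^{+}$ singularity off the diagonal. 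The much sharper quantitative estimates required for the Calder\'on-Zygmund standardness conditions \eqref{gr}--\eqref{sm2} are not needed at this stage; they are the subject of the subsequent kernel section.
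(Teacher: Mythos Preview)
Your approach is correct in outline and reaches the same conclusion, but it differs from the paper's argument in method. The paper establishes the kernel association by a weak (duality) argument: it tests both sides of the claimed identity against tensor products $r(x,z,t)=r_{1}(x)r_{2}(z,t)$ with $r_{1}\in C_{c}^{\infty}(\R)$, $r_{2}\in C_{c}^{\infty}(A)$ and $\supp f\cap\supp r_{1}=\emptyset$, expands $T_{t}^{\a,\eps,+}f$ and $G_{t}^{\a,\eps}$ via the spectral series in $h_{m}^{\a}$, and justifies the Fubini manipulations on both sides using the polynomial bound \eqref{oszh1} and, crucially, the growth condition from Theorem~\ref{kes}. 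You, by contrast, argue directly at the level of the Bochner integral: pass the derivative or $\delta_{j,x}^{(*)}$ under the $y$-integral pointwise by dominated convergence, and then upgrade to an identity in $\B$ via the integrability criterion $\int_{F}\|K(x,\cdot)\|_{\B}|f|\,dw_{\a}^{+}<\infty$. This is a legitimate and somewhat more elementary route, and it avoids the forward reference to Theorem~\ref{kes}; however, your claim that only qualitative bounds are needed is where the real work hides, especially for the area integrals, where the $z$-integration over the cone, the weight $\v_{\a}$, and the behaviour near $\partial\R$ must all be controlled simultaneously. The paper simply invokes the sharp growth estimate (which has to be proved anyway) to dispatch this step; your sketch of the qualitative argument is plausible but would need to be fleshed out to be complete.
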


\begin{thm}\label{kes}
Assume that $\alpha\in [-1\slash 2,\infty)^{d}$ and $\eps\in\Z$. Let $K(x,y)$ be any of the vector-valued kernels listed in Proposition \ref{prestow}. Then $K(x,y)$ satisfies the standard estimates \eqref{gr}, \eqref{sm1} and \eqref{sm2} with the relevant space $\B$ and either $\delta=1$ in the cases of $g$-functions, or $\delta=1\slash 2$ in the cases of area integrals.
\end{thm}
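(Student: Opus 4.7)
The plan is to reduce all six kernel estimates to a unified scheme based on an integral representation of the component heat kernels $G_t^{\a,\eps}(x,y)$. Specifically, since $\a_i+\eps_i\geq -1\slash 2$, one can apply the Poisson-type integral representation of the modified Bessel function $I_{\a_i+\eps_i}$ to every factor of the explicit product formula for $G_t^{\a,\eps}$. This yields a representation of the form
\begin{equation*}
G_t^{\a,\eps}(x,y)=c_{\a,\eps}\int_{[-1,1]^d}\exp\bigl(-q_{\pm}(x,y,s)\coth(2t)\bigr)\,F_{\eps}(t,x,y,s)\,d\Pi_{\a,\eps}(s),
\end{equation*}
where $q_{\pm}$ is a nonnegative quadratic-type expression in $(x,y,s)$, $F_{\eps}$ is an elementary smooth factor with the prefactor $(\sinh 2t)^{-d}$ absorbed, and $\Pi_{\a,\eps}$ is an explicit measure on $[-1,1]^d$. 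This is precisely the technique of \cite{Sa,NS1,Sz} mentioned in the introduction. Applying $\partial_t$, $\delta_{j,x}$, or $\delta_{j,x}^{*}$ produces the same integral but with extra polynomial factors in $t,x,y,s$ in front of the exponential.

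For the growth estimate \eqref{gr}, I would take the $\B$-norm inside the $s$-integral by Minkowski's inequality. The problem then reduces to estimating a scalar integral of the form
\begin{equation*}
\Big(\int_0^\infty t^{a}\bigl|P(t,x,y,s)\bigr|^2\exp(-2q_{\pm}(x,y,s)\coth 2t)\,dt\Big)^{1\slash 2}
\end{equation*}
(with an additional integration over $z$ with $|z|<\sqrt{t}$ in the area-integral cases). The $t$-integral is handled by the usual splitting into small-$t$ and large-$t$ regions, using $\sinh 2t\simeq t$ near $0$ and the exponential decay from $e^{-Ct}$ at infinity. After integrating in $t$ and inserting the result into the $s$-integral, one recognizes the outcome as comparable to $1\slash w_{\a}^{+}(B(x,|y-x|))$ by arguments parallel to those of \cite{Sz,NS1}, the key point being that the value of $q_{\pm}$ on the support of $\Pi_{\a,\eps}$ is comparable to $|y-x|^2$ modulo reflections, and the factor $\prod(x_j+t^{1\slash 2})^{2\a_j+1}$ matches \eqref{esv}. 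For the area integrals, the weight $\sqrt{\v_{\a}(x,z,t)}$ combined with $|z|<\sqrt{t}$ is absorbed using \eqref{esv} and the comparability $V_{\sqrt{t}}^{\a,+}(x+z)\simeq V_{\sqrt{t}}^{\a,+}(x)$ on the cone.

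For the smoothness estimates \eqref{sm1} and \eqref{sm2}, I would apply the mean value theorem, writing the increment $K(x,y)-K(x',y)$ as an integral of $\nabla_x K(\xi,y)$ along the segment joining $x$ and $x'$ (and analogously in $y$). Differentiating the above integral representation brings down an extra factor roughly of size $|\nabla_x q_{\pm}|\coth(2t)$ times the same Gaussian, which contributes a polynomial gain that, after running the same scalar integration as above, translates into the factor $|x-x'|\slash|x-y|$. This yields $\delta=1$ for the $g$-function kernels. For the area integrals, however, the additional integration over $z$ in the cone $|z|<\sqrt{t}$ forces a Cauchy--Schwarz step to convert the gradient bound into an $L^2(A,dtdz)$ norm; this step unavoidably costs a factor $t^{1\slash 2}$ that can only partially be turned into $|x-y|$, yielding $\delta=1\slash 2$ rather than $\delta=1$. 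The main obstacle I anticipate is the technical bookkeeping of a master list of scalar integral lemmas — one per kernel and per region of $t$ — that handle the joint dependence on $s\in[-1,1]^d$ (the density of $\Pi_{\a,\eps}$ vanishes near the endpoints when $\a_i+\eps_i$ is large) and the various parity classes $\eps\in\Z$; once this is done, all six kernels are treated uniformly by the same template.
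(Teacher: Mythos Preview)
Your overall framework matches the paper's: the Schl\"afli integral representation \eqref{hk}, Minkowski's inequality to push the $\B$-norm inside the $s$-integral, and the mean value theorem for smoothness. For the $g$-function kernels this outline is essentially correct and is exactly what the paper does.

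However, your explanation for why the area-integral kernels yield only $\delta=1/2$ misidentifies the obstruction. It is \emph{not} a Cauchy--Schwarz step over the cone that costs a factor $t^{1/2}$. The $z$-integration over $|z|<\sqrt t$ is harmless: the volume $\sim t^{d/2}$ is exactly cancelled by the bound $\varphi_\alpha(x,z,t)\lesssim t^{-d/2}$ (cf.\ \eqref{esv}, \eqref{fi}), and indeed for the smoothness in $y$ the paper obtains the full $\delta=1$. The real issue is the $x$-dependence of the two non-smooth factors in $K_{z,t}^{\a,\eps,V}(x,y)$: the indicator $\chi_{\{x+z\in\R\}}$ is not differentiable in $x$, and $\sqrt{\varphi_\alpha(x,z,t)}$ involves a square root of a quantity that may vanish (when some $x_j+z_j\to 0^+$), so $\nabla_x\sqrt{\varphi_\alpha}$ is not uniformly controlled. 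Your proposal to ``write the increment as an integral of $\nabla_x K(\xi,y)$ along the segment'' therefore does not apply directly to these kernels.

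The paper's remedy is a case analysis you have not anticipated. One splits the cone $A$ into four regions according to whether $x+z$ and $x'+z$ lie in $\R$. On the region where both do, one separates the difference into a heat-kernel increment (handled by the mean value theorem as you describe) and a term $|\partial_t G_t^{\a,\eps}(x+z,y)|\cdot|\sqrt{\varphi_\alpha(x,z,t)}-\sqrt{\varphi_\alpha(x',z,t)}|$; for the latter one uses $(\sqrt a-\sqrt b)^2\le|a-b|\le |x-x'|\,|\nabla_x\varphi_\alpha(\theta,z,t)|$ together with a lemma bounding $\int_{|z|<\sqrt t}|\nabla_x\varphi_\alpha|\,dz\lesssim t^{-1/2}$ (Lemma~\ref{fakt}); this is where $\sqrt{|x-x'|}$ appears. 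On the regions where exactly one of $x+z,x'+z$ lies in $\R$, one cannot difference at all, but the $z$-measure of such $z$ is $\lesssim |x-x'|\,t^{(d-1)/2}$, which again yields $\sqrt{|x-x'|}$ after taking the $L^2(dz)$ norm. You will also need the comparison $q_{\pm}(x+z,y,s)\ge\tfrac12 q_{\pm}(x,y,s)-|z|^2$ (Lemma~\ref{qz}) to transfer estimates from the shifted point $x+z$ back to $x$; your claimed comparability $V_{\sqrt t}^{\a,+}(x+z)\simeq V_{\sqrt t}^{\a,+}(x)$ does not by itself accomplish this.
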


The proofs of Propositions \ref{preogr} and \ref{prestow} are given in Section \ref{o+s} (in fact we show somewhat stronger result than Proposition \ref{preogr}). The proof of Theorem \ref{kes} is the most technical and tricky part of the paper and is located in Section \ref{ker}.

We conclude this section with various comments and remarks related to the main result.
First, we note that our results imply analogous results for $g$-functions emerging from the Poisson semigroup related to the Dunkl harmonic oscillator. To be more precise, consider the semigroup $\{P_{t}^{\a}\}_{t>0}$ generated by $\sqrt{\mathcal{L}_{\alpha}}$,
\begin{align*}
P_t^{\a} f =
e^{-t\sqrt{\mathcal{L}_{\alpha}}} f 
=
\sum_{n=0}^{\infty} e^{-t\sqrt{\ll_{n}^{\a}}}
   \sum_{|m|=n}
\langle f,h_{m}^{\a}\rangle_{dw_{\a}}h_{m}^{\a},
\end{align*}
and the auxiliary operators
\begin{align*}
P_t^{\a,\eps,+} f =
\sum_{n=0}^{\infty} e^{-t\sqrt{\ll_{n}^{\a}}}
    \sum_{\substack{|m|=n\\ m\in \mathcal{N}_{\eps}}}
\langle f,h_{m}^{\a}\rangle_{dw^{+}_{\a}}h_{m}^{\a},\qquad \eps\in\Z.
\end{align*}
Clearly, by the subordination principle,
\begin{align}\label{subpr}
P_t^\a f(x)=\int_0^\infty T_{t^{2}\slash (4u)}^{\a} f(x)\,
\frac{e^{-u}\,du}{\sqrt{\pi u}},\qquad
P_t^{\a,\eps,+} f(x)=\int_0^\infty \,T_{t^{2}\slash (4u)}^{\a,\eps,+} f(x)\,
\frac{e^{-u}\,du}{\sqrt{\pi u}}.
\end{align}
We consider the following $g$-functions:
\begin{displaymath}
\begin {array}{llll}
g_{V,P}(f)(x)&=
\big\|\partial_{t}P_{t}^{\a}f(x)\big\|_{L^{2}(tdt)},\quad
g_{V,P}^{\eps,+}(f)(x)&=
\big\|\partial_{t}P_{t}^{\a,\eps,+}f(x)\big\|_{L^{2}(tdt)},&\\
g_{H,P}^{j}(f)(x)&=
\big\|\delta_{j}P_{t}^{\a}f(x)\big\|_{L^{2}(tdt)},\quad
g_{H,P}^{j,\eps,+}(f)(x)&=
\big\|\delta_{j}P_{t}^{\a,\eps,+}f(x)\big\|_{L^{2}(tdt)},\qquad&
j=1,\ldots,d,\\
g_{H,*,P}^{j}(f)(x)&=
\big\|\delta_{j}^{*}P_{t}^{\a}f(x)\big\|_{L^{2}(tdt)},\quad
g_{H,*,P}^{j,\eps,+}(f)(x)&=
\big\|\delta_{j}^{*}P_{t}^{\a,\eps,+}f(x)\big\|_{L^{2}(tdt)},\qquad&
j=1,\ldots,d.
\end {array}
\end{displaymath}
The result below is a consequence of \eqref{subpr} and Theorems \ref{main} and \ref{main+}.
\begin{thm}\label{Poisson}
Assume that $\a\in[-1\slash 2,\infty)^{d}$ and $W$ is a weight on $\RR$ invariant under the reflections $\sigma_1,\ldots,\sigma_d$. Then each of the $g$-functions 
\begin{align*}
g_{V,P},\quad g_{H,P}^{j},\quad g_{H,*,P}^{j},\qquad j=1,\ldots,d,
\end{align*}
is bounded on $L^{p}(\RR,Wdw_{\a})$, $W^{+}\in A_{p}^{\a,+}$, $1<p<\infty$, and from $L^{1}(\RR,Wdw_{\a})$ to weak $L^{1}(\RR,Wdw_{\a})$, $W^{+}\in A_{1}^{\a,+}$. 
Furthermore, the Laguerre-type square functions
\begin{align*}
g_{V,P}^{\eps,+},\quad g_{H,P}^{j,\eps,+},\quad g_{H,*,P}^{j,\eps,+},\qquad
j=1,\ldots,d,\quad\eps\in\Z,
\end{align*}
are bounded on $L^{p}(\R,U dw_{\a}^{+})$, $U\in A_{p}^{\a,+}$, $1<p<\infty$, and from $L^1(\R,U dw_{\a}^{+})$ to weak $L^{1}(\R,U dw_{\a}^{+})$, $U\in A_{1}^{\a,+}$.
\end{thm}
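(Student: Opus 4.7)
The plan is to show pointwise majorizations of the Poisson square functions by their heat analogues, after which Theorems \ref{main} and \ref{main+} deliver the conclusion with no additional work on weights. The mechanism is the subordination identity \eqref{subpr}, combined with Minkowski's integral inequality in the outer $L^{2}(t\,dt)$ norm, followed by the change of variables $s=t^{2}/(4u)$.

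More concretely, fix $j$ and differentiate \eqref{subpr} under the integral sign to obtain
\begin{align*}
\delta_{j}P_{t}^{\a}f(x)=\int_{0}^{\infty}\delta_{j}T_{t^{2}\slash(4u)}^{\a}f(x)\,\frac{e^{-u}\,du}{\sqrt{\pi u}},
\end{align*}
and similarly with $\delta_{j}^{*}$ in place of $\delta_{j}$. Applying Minkowski's inequality in $L^{2}(t\,dt)$ yields
\begin{align*}
\big\|\delta_{j}P_{t}^{\a}f(x)\big\|_{L^{2}(tdt)}\leq\int_{0}^{\infty}\big\|\delta_{j}T_{t^{2}\slash(4u)}^{\a}f(x)\big\|_{L^{2}(tdt)}\,\frac{e^{-u}\,du}{\sqrt{\pi u}}.
\end{align*}
For each fixed $u>0$ the change of variables $s=t^{2}\slash(4u)$ sends $t\,dt$ to $2u\,ds$, so the inner norm equals $\sqrt{2u}\,g_{H}^{j}(f)(x)$. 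Inserting this and using $\int_{0}^{\infty}\sqrt{2u}\,e^{-u}(\pi u)^{-1\slash 2}\,du<\infty$ gives the pointwise bound $g_{H,P}^{j}(f)(x)\lesssim g_{H}^{j}(f)(x)$. The same argument produces $g_{H,*,P}^{j}(f)(x)\lesssim g_{H,*}^{j}(f)(x)$, and in the ``$+$'' setting it gives the analogous pointwise majorization $g_{H,P}^{j,\eps,+}(f)(x)\lesssim g_{H}^{j,\eps,+}(f)(x)$ and its starred counterpart (the subordination formula and its proof go through verbatim for $P_{t}^{\a,\eps,+}$ since it is expressed via integration against $G_{t}^{\a,\eps}(x,y)\,dw_{\a}^{+}(y)$).

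The vertical case requires slightly more care because of the $\partial_{t}$. Differentiating \eqref{subpr} gives
\begin{align*}
\partial_{t}P_{t}^{\a}f(x)=\int_{0}^{\infty}\frac{t}{2u}\,\big(\partial_{s}T_{s}^{\a}f\big)(x)\Big|_{s=t^{2}\slash(4u)}\,\frac{e^{-u}\,du}{\sqrt{\pi u}}.
\end{align*}
Minkowski again reduces matters to estimating, for each $u>0$, the $L^{2}(t\,dt)$ norm of the integrand. Under the same substitution $s=t^{2}\slash(4u)$ one computes $t^{2}\slash(4u^{2})\cdot t\,dt=\tfrac{1}{2u}\cdot 4us\cdot\tfrac{1}{2u}\cdot 2u\,ds=2s\,ds$, so the inner norm equals $\sqrt{2}\,g_{V}(f)(x)$, independent of $u$. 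Integrating against $e^{-u}(\pi u)^{-1\slash 2}\,du$, which has value $1$, we obtain $g_{V,P}(f)(x)\lesssim g_{V}(f)(x)$ pointwise, and likewise $g_{V,P}^{\eps,+}(f)(x)\lesssim g_{V}^{\eps,+}(f)(x)$.

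Combining the four pointwise majorizations with Theorems \ref{main} and \ref{main+} yields both the weighted $L^{p}$ boundedness, $1<p<\infty$, and the weighted weak type $(1,1)$ inequalities claimed in Theorem \ref{Poisson}. There is no serious obstacle here: the only point requiring attention is justifying the differentiation under the integral sign in \eqref{subpr}, which is standard once one uses the estimates \eqref{oszh1}, \eqref{oszh2} (applied to $P_{t}^{\a,\eps,+}$ coordinate-wise) together with the rapid decay of $e^{-t\sqrt{\ll_{n}^{\a}}}$ and of the subordination density to exchange derivative and integral on compact subsets of $(0,\infty)$.
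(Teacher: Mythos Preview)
Your proposal is correct and follows essentially the same approach as the paper, which simply states that Theorem \ref{Poisson} is a consequence of the subordination formula \eqref{subpr} together with Theorems \ref{main} and \ref{main+}; you have supplied the routine details (Minkowski's inequality plus the change of variables $s=t^{2}/(4u)$) that the paper leaves implicit.
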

Treatment of Lusin's area integrals associated to the Poisson semigroup is more subtle. In particular, one cannot apply the arguments from \cite[Section 2]{BMR} since in the present situation the function $V_t^\a(x)$ depends not only on $t$, but also on $x$. 

Next, we note that for the particular $\a=(-1\slash 2,\ldots,-1\slash 2)$ the generalized Hermite functions become the classic Hermite functions and $L_{\a}$ is the Euclidean harmonic oscillator. Thus Theorem \ref{main} provides, in particular, results in the Hermite setting for which certain square functions were studied earlier. To be more precise, the vertical $g$-function $g_V$ was considered by Thangavelu \cite[Chapter 4]{T} to prove the Marcinkiewicz multiplier theorem for Hermite function expansions. The Poisson semigroup based $g$-functions $g_{V,P}$, $g_{H,P}^{j}$, $g_{H,*,P}^{j}$, $j=1,\ldots,d$, were studied by Harboure, de Rosa, Segovia and Torrea \cite{HRST}, in connection with Riesz transforms associated to the Hermite setting. All the abovementioned square functions were reinvestigated later by Stempak and Torrea \cite{StTo2}. Lusin's area integrals for Hermite function expansions were studied recently, in the one-dimensional case, by Betancor, Molina and Rodr\'{\i}guez-Mesa \cite{BMR}. The area integral $g^2_{\mathbb{W}}$ there coincides, up to a multiplicative constant, with our area integral $S_V$ with slightly modified aperture of the parabolic cone $A$ (see also Remark \ref{ext} below).

We now focus on the relation between the Laguerre-type square functions studied in this paper and the Laguerre setting from \cite{Sz}. We note that for the particular $\eps_{0}=(0,\ldots,0)$, the operators $T_{t}^{\a,\eps_{0},+}$, $t>0$, coincides, up to the factor $2^{-d}$, with the Laguerre semigroup $T_{t}^{\a}$ considered in \cite{Sz}. 
Moreover, for $\eps=e_j$, $j=1,\ldots,d$, the operators $T_t^{\a,e_j,+}$ are related to the modified Laguerre semigroups $\widetilde T_{t}^{\a,j}$ (see \cite[Section 2]{Sz} for the definition) by
\begin{align}\label{eq5}
\widetilde T_t^{\a,j}=2^{d}e^{-2t}\,T_{t}^{\a,e_{j},+}.
\end{align}
Therefore many results of \cite{Sz} can be seen as special cases of Theorem \ref{CZ}. More precisely, these observations, or rather analogous observations concerning the integral kernels of the semigroups in question, combined with Theorem \ref{CZ} show that the $g$-functions $g_{V,T}$, $g_{H,T}^{i}$, $g_{H,\widetilde T}^{j,i}$, $i,j=1,\ldots,d$, investigated in \cite{Sz} can be viewed as vector-valued Calder\'on-Zygmund operators. The fact that $g_{V,\widetilde T}^{j}$, $j=1,\ldots,d$, from \cite{Sz} may be interpreted as vector-valued Calder\'on-Zygmund operators can be, in principle, also recovered from the results and reasonings of this paper; this, however, is less explicit because of the factor $e^{-2t}$ in \eqref{eq5}, which does not affect the horizontal $g$-functions.

Further, we define Lusin's area type integrals in the Laguerre function setting of convolution type; such operators were not considered in \cite{Sz}. We adopt the notation from \cite{Sz}, but to avoid a confusion, here we denote the Laguerre heat semigroup by $\mathbb{T}_t^\a$. Let
\begin{displaymath}
\begin {array}{lll}
S_{V,\mathbb{T}}(f)(x)&=
\Big(\int_{A(x)}t\big|\partial_{t}\mathbb{T}_{t}^{\a}f(z)\big|^{2}
\chi_{\{z\in\R\}}
\frac{d\mu_{\a}(z)}{V_{\sqrt{t}}^{\a,+}(x)}
\,dt\Big)^{1\slash 2},&\\
S_{H,\mathbb{T}}^{j}(f)(x)&=
\Big(\int_{A(x)}\big|\delta_{j}\mathbb{T}_{t}^{\a}f(z)\big|^{2}
\chi_{\{z\in\R\}}
\frac{d\mu_{\a}(z)}{V_{\sqrt{t}}^{\a,+}(x)}
\,dt\Big)^{1\slash 2},\qquad&
j=1,\ldots,d,\\
S_{H,\widetilde{T}}^{j,i}(f)(x)&=
\Big(\int_{A(x)}\big|\delta_{i}\widetilde{T}_{t}^{\a,j}f(z)\big|^{2}
\chi_{\{z\in\R\}}
\frac{d\mu_{\a}(z)}{V_{\sqrt{t}}^{\a,+}(x)}
\,dt\Big)^{1\slash 2},\qquad&
i,j=1,\ldots,d,\quad i\ne j,\\
S_{H,\widetilde{T}}^{j,j}(f)(x)&=
\Big(\int_{A(x)}\big|\delta_{j}^{*}\widetilde{T}_{t}^{\a,j}f(z)\big|^{2}
\chi_{\{z\in\R\}}
\frac{d\mu_{\a}(z)}{V_{\sqrt{t}}^{\a,+}(x)}
\,dt\Big)^{1\slash 2},\qquad&
j=1,\ldots,d,
\end {array}
\end{displaymath}
where $V_t^{\a,+}(x)$ is defined by \eqref{defV+}, because $d\mu_\a\equiv dw_\a^+$. Thus Theorems \ref{main+} and \ref{CZ} provide, in particular, the following result for the Laguerre area integrals.
\begin{thm}\label{Lag}
Assume that $\a\in[-1\slash 2,\infty)^{d}$. Then each of the Lusin's area type integrals
\begin{align*}
S_{V, \mathbb T}, \quad S_{H,\mathbb{T}}^{j},\quad S_{H,\widetilde{T}}^{j,i},
\qquad
i,j=1,\ldots,d,
\end{align*}
viewed as a vector-valued operator related to either $\B=L^{2}(A,tdtdz)$ (the case of $S_{V,\mathbb{T}}$), or $\B=L^{2}(A,dtdz)$ (the cases of $S_{H,\mathbb{T}}^{j}$ and $S_{H,\widetilde{T}}^{j,i}$), is a Calder\'on-Zygmund operator in the sense of the space of homogeneous type $(\R,d\mu_{\a},|\cdot|)$. Consequently, these square functions are bounded on $L^{p}(\R,U d\mu_{\a})$, $U\in A_{p}^{\a,+}$, $1<p<\infty$, and from $L^1(\R,U d\mu_{\a})$ to weak $L^{1}(\R,U d\mu_{\a})$, $U\in A_{1}^{\a,+}$.
\end{thm}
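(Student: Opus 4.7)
The plan is to reduce Theorem \ref{Lag} directly to Theorem \ref{CZ} (and hence to Theorem \ref{main+}) by identifying each Laguerre area integral, up to a bounded multiplicative factor, with one of the Dunkl Laguerre-type area integrals $S_{V}^{\eps,+}$, $S_{H}^{j,\eps,+}$, $S_{H,*}^{j,\eps,+}$ already treated there. Since $d\mu_{\a} \equiv dw_{\a}^{+}$ and the volume function $V_{\sqrt{t}}^{\a,+}(x)$ appearing in the definitions is exactly the same, the reduction amounts to matching the underlying semigroups.

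The required identifications were pointed out in the discussion around \eqref{eq5}: the Laguerre heat semigroup $\mathbb{T}_{t}^{\a}$ coincides with $T_{t}^{\a,\eps_{0},+}$ up to the factor $2^{-d}$, where $\eps_{0} = (0,\ldots,0)$, while the modified Laguerre semigroups satisfy $\widetilde T_{t}^{\a,j} = 2^{d} e^{-2t}\, T_{t}^{\a,e_{j},+}$. Inserting these into the definitions immediately yields $S_{V,\mathbb T} = c\, S_{V}^{\eps_{0},+}$ and $S_{H,\mathbb T}^{j} = c\, S_{H}^{j,\eps_{0},+}$ with $c$ a nonzero constant of magnitude $2^{-d}$, so the first two cases of the theorem follow at once from Theorem \ref{CZ} applied with $\eps = \eps_{0}$. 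For the two remaining cases, the associated $\B$-valued kernels with $\B = L^{2}(A,dtdz)$ are obtained from the kernels of $S_{H}^{i,e_{j},+}$ (for $i \neq j$) and $S_{H,*}^{j,e_{j},+}$ listed in Proposition \ref{prestow}, multiplied pointwise by the scalar factor $2^{d} e^{-2t}$. Since this factor is uniformly bounded by $2^{d}$ in $(z,t,x,y)$, multiplying a $\B$-valued kernel by it can only multiply its $L^{2}(A,dtdz)$ norm by at most $2^{d}$; hence the growth bound \eqref{gr} and both smoothness bounds \eqref{sm1}, \eqref{sm2} supplied by Theorem \ref{kes} persist without change. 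Likewise, $L^{2}$-boundedness of the vector-valued operator on $L^{2}(\R,dw_{\a}^{+})$ and the Calder\'on-Zygmund kernel association are inherited from Propositions \ref{preogr} and \ref{prestow} by the same bounded-factor argument. Consequently each Laguerre area integral is a vector-valued Calder\'on-Zygmund operator in the sense of $(\R,dw_{\a}^{+},|\cdot|)$, and the weighted $L^{p}$ and weak type $(1,1)$ conclusions follow from the general Calder\'on-Zygmund machinery, exactly as in Theorem \ref{main+}.

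I do not expect a genuinely new obstacle here: the entire analytic heavy lifting is already done in Theorems \ref{main+}, \ref{CZ}, and \ref{kes}. The only point that requires a brief justification is the routine observation that introducing the spatially independent, uniformly bounded factor $e^{-2t}$ preserves every standard estimate and the kernel association demanded of a vector-valued Calder\'on-Zygmund operator on $(\R,dw_{\a}^{+},|\cdot|)$.
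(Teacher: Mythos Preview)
Your proposal is correct and follows essentially the same route as the paper, which does not give a separate proof but derives Theorem \ref{Lag} directly from Theorems \ref{main+} and \ref{CZ} via the semigroup identifications $\mathbb{T}_{t}^{\a}=2^{d}T_{t}^{\a,\eps_{0},+}$ and $\widetilde{T}_{t}^{\a,j}=2^{d}e^{-2t}T_{t}^{\a,e_{j},+}$. Your added remark that the spatially independent bounded factor $e^{-2t}$ preserves the standard kernel estimates, the $L^{2}$-boundedness, and the kernel association is exactly the routine observation the paper leaves implicit.
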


\begin{rem}\label{lowerest}
Theorem \ref{main}, Theorem \ref{main+}, the first identity of Proposition \ref{ogr1} and the analogous equalities for $g_V$, $g_{V,P}^{\eps,+}$ and $g_{V,P}$, together with standard arguments, see \cite[Remark 2.6]{Sz}, allow to show also lower weighted $L^p$ estimates for the vertical $g$-functions under consideration. With the assumption $\a\in [-1\slash 2,\infty)^d$, for $\eps\in\Z$ and $U\in A_{p}^{\a,+}$, $1<p<\infty$, we have
\begin{displaymath}
\begin {array}{lll}
\|f\|_{L^{p}(\R,Udw_{\a}^{+})}
\lesssim&
\|g_{V}^{\eps,+}(f)\|_{L^{p}(\R,Udw_{\a}^{+})},&\qquad f\in L^{p}(\R,Udw_{\a}^{+}),\\
\|f\|_{L^{p}(\R,Udw_{\a}^{+})}
\lesssim&
\|g_{V,P}^{\eps,+}(f)\|_{L^{p}(\R,Udw_{\a}^{+})},&\qquad f\in L^{p}(\R,Udw_{\a}^{+}).
\end {array}
\end{displaymath}
Consequently, if $W$ is a symmetric weight on $\RR$, $W^{+}\in A_{p}^{\a,+}$, $1<p<\infty$, we also have 
\begin{displaymath}
\begin {array}{lll}
\|f\|_{L^{p}(\RR,Wdw_{\a})}
\lesssim&
\|g_{V}(f)\|_{L^{p}(\RR,Wdw_{\a})},&\qquad f\in L^{p}(\RR,Wdw_{\a}),\\
\|f\|_{L^{p}(\RR,Wdw_{\a})}
\lesssim&
\|g_{V,P}(f)\|_{L^{p}(\RR,Wdw_{\a})},&\qquad f\in L^{p}(\RR,Wdw_{\a}).
\end {array}
\end{displaymath}
\end{rem}

\begin{rem}\label{ext}
The exact aperture of the parabolic cone $A$ is not essential for our developments. Indeed, if we fix $\beta>0$ and write $A_{\beta}=\Big\{(z,t)\in \RR\times(0,\infty) : |z|<\beta\sqrt{t}\Big\}$ instead of $A$ in the definitions of Lusin's area type integrals, then the results of this paper, and in particular Theorem \ref{main}, remain valid. 
\end{rem}
\section{$L^{2}$-Boundedness and Kernel associations}\label{o+s}
\setcounter{equation}{0}

In this section we check that the Laguerre-type square functions under consideration are bounded on the Hilbert space $L^2(\R,dw^{+}_{\a})$. We also show that these square functions, viewed as vector-valued operators, are associated with the relevant kernels.

The following result is essentially a slight generalization of \cite[Proposition 3.1]{Sz} and \cite[Proposition 3.2]{Sz}. The proof is nearly identical and thus is omitted. A crucial fact needed in the proof is that for each $\eps\in\Z$ the system $\{2^{d\slash 2}h_{m}^{\a} : m\in\mathcal{N}_{\eps}\}$ is an orthonormal basis in $L^2(\R,dw^{+}_{\a})$.

\begin{pro}\label{ogr1}
Assume that $\a\in [-1\slash 2,\infty)^{d}$ and $\eps\in\Z$. Then

\begin {align}\nonumber
\|g_{V}^{\eps,+}(f)\|_{L^2(\R,dw^{+}_{\a})}
=&\,
2^{-d-1}\|f\|_{L^2(\R,dw^{+}_{\a})},\qquad f\in L^2(\R,dw^{+}_{\a}),\\\label{iden1}
\Big\|\big|\big(g_{H}^{1,\eps,+}(f),\ldots,g_{H}^{d,\eps,+}(f)\big)\big|_{\ell^2}\Big\|_{L^2(\R,dw^{+}_{\a})}
\lesssim&\,
\|f\|_{L^2(\R,dw^{+}_{\a})},\qquad
f\in L^2(\R,dw^{+}_{\a}),\\\nonumber
\Big\|\big|\big(g_{H,*}^{1,\eps,+}(f),\ldots,g_{H,*}^{d,\eps,+}(f)\big)\big|_{\ell^2}\Big\|_{L^2(\R,dw^{+}_{\a})}
\simeq&\,
\|f\|_{L^2(\R,dw^{+}_{\a})},\qquad
f\in L^2(\R,dw^{+}_{\a}).
\end {align}
Moreover, if $\eps\ne(0,\ldots,0)$, then the relation "$\lesssim$" in \eqref{iden1} can be replaced by "$\simeq$". The same is true for $\eps=(0,\ldots,0)$ provided that $f$ is taken from the subspace $\{h_{(0,\ldots,0)}^{\a}\}^{\perp}\subset L^2(\R,dw_{\a}^+)$.
\end{pro}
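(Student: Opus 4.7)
The plan is to expand every function in sight in the orthonormal basis $\{2^{d\slash 2} h_m^{\a} : m\in\mathcal{N}_{\eps}\}$ of $L^{2}(\R,dw_{\a}^{+})$ recalled just before the statement, and then apply Plancherel and Fubini to reduce each square function norm to an explicit spectral multiplier. In particular, for every $f\in L^{2}(\R,dw_{\a}^{+})$ one has the Parseval identity $\|f\|_{L^{2}(\R,dw_{\a}^{+})}^{2} = 2^{d}\sum_{m\in\mathcal{N}_{\eps}} |\langle f,h_{m}^{\a}\rangle_{dw_{\a}^{+}}|^{2}$, together with $\|h_{m}^{\a}\|_{L^{2}(\R,dw_{\a}^{+})}^{2}=2^{-d}$ for $m\in\mathcal{N}_{\eps}$ (by $\eps$-symmetry of $|h_{m}^{\a}|^{2}$).

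For $g_{V}^{\eps,+}$, I would differentiate the spectral series for $T_{t}^{\a,\eps,+}f$ termwise in $t$, take the $L^{2}(\R,dw_{\a}^{+})$-norm squared by Plancherel, and integrate against $t\,dt$ using Fubini. Each mode $m$ then contributes $\int_{0}^{\infty} t(\ll_{|m|}^{\a})^{2}e^{-2t\ll_{|m|}^{\a}}\,dt = 1\slash 4$, and the whole expression collapses to $2^{-d-2}\sum_{m\in\mathcal{N}_{\eps}}|\langle f,h_{m}^{\a}\rangle_{dw_{\a}^{+}}|^{2} = 2^{-2d-2}\|f\|^{2}$, giving exactly the constant $2^{-d-1}$.

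For the horizontal cases I would invoke the differentiation formula $\delta_{j}h_{m}^{\a}=\Phi(m_{j},\a_{j})h_{m-e_{j}}^{\a}$ stated in the introduction and its adjoint companion $\delta_{j}^{*}h_{m}^{\a}=\Phi(m_{j}+1,\a_{j})h_{m+e_{j}}^{\a}$. Applied termwise to the series for $T_{t}^{\a,\eps,+}f$, these produce expansions in a system $\{h_{k}^{\a}: k\in\mathcal{N}_{\eps\pm e_{j}}\}$ which is still orthogonal in $L^{2}(\R,dw_{\a}^{+})$ with the same norms $2^{-d\slash 2}$. Together with $\int_{0}^{\infty} e^{-2t\ll}\,dt = 1\slash(2\ll)$, the $\ell^{2}$-aggregate of the $d$ horizontal norms then equals
\begin{equation*}
2^{-d}\sum_{m\in\mathcal{N}_{\eps}}|\langle f,h_{m}^{\a}\rangle_{dw_{\a}^{+}}|^{2}\; \frac{\Psi(m)}{2\ll_{|m|}^{\a}},
\end{equation*}
with $\Psi(m)=\sum_{j=1}^{d}\Phi(m_{j},\a_{j})^{2}$ in the $\delta_{j}$ case and $\Psi(m)=\sum_{j=1}^{d}\Phi(m_{j}+1,\a_{j})^{2}$ in the $\delta_{j}^{*}$ case. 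Comparing with $\|f\|^{2}=2^{d}\sum_{m}|\langle f,h_{m}^{\a}\rangle_{dw_{\a}^{+}}|^{2}$, the whole proposition reduces to a two-sided multiplier estimate $\Psi(m)\simeq \ll_{|m|}^{\a}$.

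The only step needing care is this multiplier bound, and it is also the place where the "$\lesssim$ vs.\ $\simeq$" dichotomy of the statement originates. For $\delta_{j}$, a direct count gives $\Psi(m)=2|m|+2\sum_{j:\, m_{j}\,\textrm{odd}}(2\a_{j}+1)$, which is at most $2\ll_{|m|}^{\a}$ (using $\a_{j}\geq -1\slash 2$) and at least $2|m|$; this is comparable to $\ll_{|m|}^{\a}$ as soon as $|m|\geq 1$. Since $m\in\mathcal{N}_{\eps}$ with $\eps\ne(0,\ldots,0)$ forces $|m|\geq 1$, the equivalence "$\simeq$" holds automatically in that case, while for $\eps=(0,\ldots,0)$ the mode $m=0$ must be excluded -- which is precisely the restriction to $\{h_{(0,\ldots,0)}^{\a}\}^{\perp}$ noted in the proposition (indeed $\delta_{j}h_{0}^{\a}=\Phi(0,\a_{j})h_{-e_{j}}^{\a}=0$). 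For $\delta_{j}^{*}$, the analogous computation gives $\Psi(m)=2|m|+2d+\sum_{j:\, m_{j}\,\textrm{even}}(4\a_{j}+2)$, which is bounded below by $2d>0$ uniformly in $m$ and $\eps$, so the equivalence holds for every $\eps$ without any exception. No serious obstacle is anticipated: all sums are nonnegative so Plancherel and Fubini apply verbatim, and the proof closely follows \cite[Propositions 3.1, 3.2]{Sz}.
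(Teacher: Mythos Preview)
Your proposal is correct and follows essentially the same approach as the paper, which omits the proof and refers to \cite[Propositions~3.1, 3.2]{Sz}, noting only that the key ingredient is the orthonormal basis $\{2^{d/2}h_{m}^{\a}:m\in\mathcal{N}_{\eps}\}$ of $L^{2}(\R,dw_{\a}^{+})$. You have simply written out the spectral/Plancherel computation that the paper leaves implicit, including the multiplier bounds that explain the $\lesssim$ vs.\ $\simeq$ distinction.
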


\begin{pro}\label{ogr2}
Assume that $\a\in [-1\slash 2,\infty)^{d}$ and $\eps\in\Z$. Then
\begin {align}\nonumber
\|S_{V}^{\eps,+}(f)\|_{L^2(\R,dw^{+}_{\a})}
\simeq&\,
\|f\|_{L^2(\R,dw^{+}_{\a})},\qquad f\in L^2(\R,dw^{+}_{\a}),\\\label{iden2}
\Big\|\big|\big(S_{H}^{1,\eps,+}(f),\ldots,S_{H}^{d,\eps,+}(f)\big)\big|_{\ell^2}\Big\|_{L^2(\R,dw^{+}_{\a})}
\lesssim&\,
\|f\|_{L^2(\R,dw^{+}_{\a})},\qquad
f\in L^2(\R,dw^{+}_{\a}),\\\nonumber
\Big\|\big|\big(S_{H,*}^{1,\eps,+}(f),\ldots,S_{H,*}^{d,\eps,+}(f)\big)\big|_{\ell^2}\Big\|_{L^2(\R,dw^{+}_{\a})}
\simeq&\,
\|f\|_{L^2(\R,dw^{+}_{\a})},\qquad
f\in L^2(\R,dw^{+}_{\a}).
\end {align}
Moreover, if $\eps\ne(0,\ldots,0)$, then the relation "$\lesssim$" in \eqref{iden2} can be replaced by "$\simeq$". The same is true for $\eps=(0,\ldots,0)$ provided that $f$ is taken from the subspace $\{h_{(0,\ldots,0)}^{\a}\}^{\perp}\subset L^2(\R,dw_{\a}^+)$.
\end{pro}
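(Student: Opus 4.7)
The plan is to reduce Proposition \ref{ogr2} to the corresponding identities for the $g$-functions already proved in Proposition \ref{ogr1}, via Fubini's theorem and a change of variables. The key point is that the weight $\v_{\a}(x,z,t)$ appearing in the definition of the Laguerre-type area integrals is designed precisely so that, after integrating in $x$ for fixed cone variables $(z,t)$, the resulting kernel is comparable to a constant uniformly in the ``observation point'' $y\in\R$ and the time $t>0$.

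I would first carry out the manipulation for $S_V^{\eps,+}$; the horizontal cases go through identically, with $\partial_t$ replaced by $\delta_j$ or $\delta_j^*$ and the measure $t\,dt\,dz$ replaced by $dt\,dz$. Writing out the squared $L^2$ norm, applying Fubini to push the $x$-integration inside, and substituting $y=x+z$ converts the expression into
\begin{align*}
\|S_V^{\eps,+}(f)\|_{L^2(\R,dw_\a^+)}^2
=
\int_0^\infty t \int_\R \big|\partial_t T_t^{\a,\eps,+}f(y)\big|^2\, J(y,t)\, dw_\a^+(y)\, dt,
\end{align*}
where, after the further substitution $u=y-z$ inside the remaining $z$-integral,
\begin{align*}
J(y,t)=\int_{\{|y-u|<\sqrt{t}\}\cap\R}\;\prod_{j=1}^d\frac{u_j^{2\a_j+1}}{V_{\sqrt{t}}^{\a_j,+}(u_j)}\,du.
\end{align*}

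The main task is then to show $J(y,t)\simeq 1$ uniformly in $y\in\R$ and $t>0$. For the upper bound I would enlarge the Euclidean ball to the axis-aligned box $\prod_j(y_j-\sqrt{t},y_j+\sqrt{t})$, split into a product of one-dimensional integrals, and bound each factor by a constant using \eqref{esv} together with the trivial inequality $u_j^{2\a_j+1}\le(u_j+\sqrt{t})^{2\a_j+1}$. For the lower bound I would restrict $J(y,t)$ to the concentric box of half-side $\sqrt{t}/\sqrt{d}$, which sits inside the Euclidean ball; on this box \eqref{esv} yields $V_{\sqrt{t}}^{\a_j,+}(u_j)\simeq V_{\sqrt{t}}^{\a_j,+}(y_j)$, so the resulting numerator is the doubling-comparable quantity $V_{\sqrt{t}/\sqrt{d}}^{\a_j,+}(y_j)$, which matches the denominator up to a constant depending only on $d$ and $\a$.

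Once $J(y,t)\simeq 1$ is established, the identity for $S_V^{\eps,+}$ follows immediately from the first identity of Proposition \ref{ogr1}. The same manipulation, applied componentwise to $S_H^{j,\eps,+}$ and $S_{H,*}^{j,\eps,+}$ and summed in $j$, produces the exact counterparts of the $\ell^2$-identities of Proposition \ref{ogr1}, from which \eqref{iden2} and its $\delta_j^*$-analogue follow; the sharpening from ``$\lesssim$'' to ``$\simeq$'' in the two exceptional situations (either $\eps\ne(0,\ldots,0)$ or $f\in\{h_{(0,\ldots,0)}^{\a}\}^{\perp}$) is inherited directly from Proposition \ref{ogr1}. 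I expect the only real obstacle to be the uniform comparability $J(y,t)\simeq 1$, which is essentially a one-dimensional exercise split into the regimes $y_j\le\sqrt{t}$ and $y_j>\sqrt{t}$ and becomes routine once the doubling behavior of $V_{\sqrt{t}}^{\a_j,+}$ encoded in \eqref{esv} is exploited.
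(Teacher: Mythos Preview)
Your proposal is correct and follows essentially the same approach as the paper: apply Fubini to the squared $L^2$ norm, change variables so that the integrand factors as $t|\partial_t T_t^{\a,\eps,+}f(y)|^2$ times a kernel $J(y,t)$, show $J(y,t)\simeq 1$ via a one-dimensional estimate based on \eqref{esv}, and then invoke Proposition \ref{ogr1}. The paper's one-dimensional bounds differ only cosmetically from yours (it bounds the integrand by $t^{-1/2}$ for the upper bound and, for the lower bound, restricts to a subinterval such as $[z_j+\sqrt{t}/2,\,z_j+\sqrt{t}]$ and uses the monotonicity of $u\mapsto (u/(u+\sqrt{t}))^{2\a_j+1}$ rather than the doubling of $V_{\sqrt{t}}^{\a_j,+}$); your explicit care with the $\sqrt{d}$ factor when passing from the Euclidean ball to a box is, if anything, slightly cleaner.
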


\begin {proof}
We give a justification only for the first relation. The remaining cases, being similar, are left to the reader. 
Using the Fubini-Tonelli theorem, the estimate \eqref{esv} of $V_{\sqrt{t}}^{\a,+}(x)$ and then the inequalities
\begin{align*}
\int_{0}^{\infty}\chi_{\{|x_{j}-z_{j}|<\sqrt{t}\}}\,
\frac{x_{j}^{2\a_{j}+1}}{\sqrt{t}(x_{j}+\sqrt{t})^{2\a_{j}+1}}\,dx_{j}\leq 2,
\end{align*}
which is legitimate since the integrand is dominated by $t^{-1\slash 2}$, and 
\begin{align*}
\int_{0}^{\infty}\chi_{\{|x_{j}-z_{j}|<\sqrt{t}\}}\,
\frac{x_{j}^{2\a_{j}+1}}{\sqrt{t}(x_{j}+\sqrt{t})^{2\a_{j}+1}}\,dx_{j}
\geq
t^{-1\slash 2}
\int_{z_{j}+\sqrt{t}\slash 2}^{z_{j}+\sqrt{t}}
\bigg(\frac{x_j}{x_j+\sqrt{t}}\bigg)^{2\a_{j}+1}\,dx_j
\geq
\frac{1}{2}3^{-2\a_j-1},
\end{align*}
which holds because the function $x_j\mapsto \Big(\frac{x_j}{x_j+\sqrt{t}}\Big)^{2\a_j+1}$ is increasing for $x_j>0$, we obtain
\begin{align*}
\|S_{V}^{\eps,+}(f)\|_{L^2(\R,dw^{+}_{\a})}^{2}
&\simeq
\int_{\R}\int_{0}^{\infty}t|\partial_{t}T_{t}^{\a,\eps,+}f(z)|^{2}\,dt\,dw^{+}_{\a}(z)
=\|g_{V}^{\eps,+}(f)\|_{L^2(\R,dw^{+}_{\a})}^{2}.
\end{align*}
Now the conclusion follows from the first identity of Proposition \ref{ogr1}.
\end {proof}
Proposition \ref{ogr1} together with Proposition \ref{ogr2} imply Proposition \ref{preogr}.

Next we prove that each of the Laguerre-type square functions under consideration, viewed as a vector-valued linear operator, is indeed associated with the relevant kernel in the sense of the Calder\'on-Zygmund theory. We adapt essentially the reasoning given in the proof of \cite[Proposition 2.3]{Sz}, see also comments and references given there.

\begin {proof}[Proof of Proposition \ref{prestow}.]
A careful repetition of the arguments given in the proof of \cite[Proposition 2.3]{Sz}, see also \cite[Section 2]{StTo2}, leads to the desired conclusions for the $g$-functions $g_{V}^{\eps,+}$, $g_{H}^{j,\eps,+}$ and $g_{H,*}^{j,\eps,+}$, since we have a suitable estimate for the generalized Hermite functions, see \eqref{oszh1}, and the relevant kernel estimates, see Theorem \ref{kes}.

Treatment of the area integrals $S_{V}^{\eps,+}$, $S_{H}^{j,\eps,+}$ and $S_{H,*}^{j,\eps,+}$, is slightly different, but relies on similar arguments. Hence we give the details only in the case of $S_{V}^{\eps,+}$, leaving the remaining cases to the reader. Let $\B=L^2(A,tdtdz)$. Proceeding as in the proof of \cite[Proposition 2.3]{Sz} one reduces the task to checking that
\begin{align}\nonumber
\Big\langle&\Big\{\partial_{t}T_{t}^{\a,\eps,+} f(x+z)\chi_{\{x+z\in\R\}}\sqrt{\v_{\a}(x,z,t)}\Big\}_{(z,t)\in A},r\Big\rangle_{L^2_{\B}(\R,dw_{\a}^{+})}\\\label{eqs}
&=
\bigg\langle\int_{\R}\big\{K_{z,t}^{\a,\eps,V}(x,y)\big\}_{(z,t)\in A}f(y)
\,dw^{+}_{\a}(y),r\bigg\rangle_{L^2_{\B}(\R,dw_{\a}^{+})}
\end{align}
for every $f\in C^{\infty}_c(\R)$ and $r(x,z,t)=r_{1}(x)r_{2}(z,t)$, where $r_{1}\in C^{\infty}_c(\R)$,  $r_{2}\in C^{\infty}_c(A)$ and $\supp f\cap \supp r_{1}=\emptyset$. We first deal with the left-hand side of \eqref{eqs},
\begin{align*}
\Big\langle&\Big\{\partial_{t}T_{t}^{\a,\eps,+} f(x+z)\chi_{\{x+z\in\R\}}\sqrt{\v_{\a}(x,z,t)}\Big\}_{(z,t)\in A},r\Big\rangle_{L^2_{\B}(\R,dw_{\a}^{+})}\\
=&\,
\int_{A}t\overline{r_{2}(z,t)}\int_{\R}
\partial_{t}T_{t}^{\a,\eps,+}f(x+z)\chi_{\{x+z\in\R\}}\sqrt{\v_{\a}(x,z,t)}
\,\overline{r_{1}(x)}\,dw^{+}_{\a}(x)\,dt\,dz\\
=&
-
\int_{A}t\overline{r_{2}(z,t)}\int_{\R}\bigg(\sum_{n=0}^{\infty}\ll_{n}^{\a}
\,e^{-t\ll_{n}^{\a}}
\sum_{\substack{|m|=n\\ m\in \mathcal{N}_{\eps}}}
\langle f,h_{m}^{\a}\rangle_{dw^{+}_{\a}}h_{m}^{\a}(x+z)\bigg)\chi_{\{x+z\in\R\}}\\
&\times
\sqrt{\v_{\a}(x,z,t)}\,
\overline{r_{1}(x)}\,dw^{+}_{\a}(x)\,dt\,dz.
\end{align*}
The first identity above follows by Fubini's theorem; the possibility of its application can be justified with the aid of the boundedness of $S_{V}^{\eps,+}$ on $L^2(\R,dw^{+}_{\a})$. The second equality is obtained by exchanging the order of $\partial_{t}$ and $\sum$, which is legitimate in view of \eqref{oszh1}.

Now we focus on the right-hand side of \eqref{eqs}. Changing the order of integrals, which is justified by the growth condition for the kernel $\big\{K_{z,t}^{\a,\eps,V}(x,y)\big\}$, see Theorem \ref{kes}, and using the fact that the supports of $f$ and $r_{1}$ are disjoint and compact, we see that the expression in question is equal
\begin{align*}
\int_{A}t\overline{r_{2}(z,t)}\int_{\R}\int_{\R}K_{z,t}^{\a,\eps,V}(x,y)f(y)
\overline{r_{1}(x)}\,dw^{+}_{\a}(y)\,dw^{+}_{\a}(x)\,dt\,dz.
\end{align*}
Then expressing $K_{z,t}^{\a,\eps,V}$ by means of the series and then using Fubini's theorem, whose application is legitimate in view of \eqref{oszh1}, we get
\begin{align*}
\int_{\R}&K_{z,t}^{\a,\eps,V}(x,y)f(y)
\,dw^{+}_{\a}(y)\\
=&
-
\int_{\R}\bigg(\sum_{n=0}^{\infty}\ll_{n}^{\a}\,e^{-t\ll_{n}^{\a}}
\sum_{\substack{|m|=n\\ m\in \mathcal{N}_{\eps}}}
h_{m}^{\a}(x+z)h_{m}^{\a}(y)\bigg)\chi_{\{x+z\in\R\}}
\sqrt{\v_{\a}(x,z,t)}
f(y)\,dw^{+}_{\a}(y)\\
=&
-
\sum_{n=0}^{\infty}\ll_{n}^{\a}\,e^{-t\ll_{n}^{\a}}
\sum_{\substack{|m|=n\\ m\in \mathcal{N}_{\eps}}}
\langle f,h_{m}^{\a}\rangle_{dw^{+}_{\a}}
h_{m}^{\a}(x+z)\chi_{\{x+z\in\R\}}
\sqrt{\v_{\a}(x,z,t)}.
\end{align*}
Integrating the last identity against $t\overline{r_{1}(x)r_{2}(z,t)}\,dw_{\a}^{+}(x)\,dt\,dz$, we see that both sides of \eqref{eqs} coincide.
\end {proof}

\section{Kernel estimates}\label{ker}
\setcounter{equation}{0}
This section is devoted to the proofs of the relevant kernel estimates for all the considered square functions. We generalize the arguments developed in \cite{NS1,NS3}, which are based on Schl\"afli's integral representation for the modified Bessel function $I_{\nu}$ involved in the Dunkl heat kernel. This method was used also by the author in \cite{Sz} to obtain the standard estimates for the kernel $G_{t}^{\a,\eps}(x,y)$ in the extreme case when $\eps=(0,\ldots,0)$. Recall that we always assume that $\a\in[-1\slash 2,\infty)^{d}$.

Given $\eps\in\Z$, the $\eps$-component of the Dunkl heat kernel is given by, see \cite[Section 5]{NS3},
\begin{equation}\label{hk}
G_{t}^{\a,\eps}(x,y)=\frac{1}{2^{d}}\Big(\frac{1-\zeta^{2}}{2\zeta}\Big)^{d+|\alpha|+|\eps|}(xy)^{\eps}
\int_{[-1,1]^{d}}\exp\Big(-{\frac{1}{4\zeta}}q_{+}(x,y,s)-\frac{\zeta}{4}q_{-}(x,y,s)\Big)\,\Pi_{\a+\eps}(ds),
\end{equation}
where $(xy)^{\eps}=(x_{1}y_{1})^{\eps_{1}}\cdot\ldots\cdot (x_{d}y_{d})^{\eps_{d}}$,
\begin{align*}
q_{\pm}(x,y,s)=
|x|^2+|y|^2\pm 2\sum_{i=1}^{d}x_{i}y_{i}s_{i},
\end{align*}
and $t>0$ and $\z\in(0,1)$ are related by $\zeta=\tanh t$; equivalently
\begin{equation}\label{zz}
t=t(\z)=\frac{1}{2}\log\frac{1+\z}{1-\z}.
\end{equation}
The measure $\Pi_{\beta}$ appearing in \eqref{hk} is a product of one-dimensional measures, $\Pi_{\beta}=\bigotimes_{i=1}^{d}\Pi_{\beta_{i}}$, where $\Pi_{\beta_{i}}$ is given by the density
$$
\Pi_{\beta_{i}}(ds_{i}) = \frac{(1-s_{i}^2)^{\beta_{i}-1\slash 2}ds_{i}}{\sqrt{\pi} 2^{\beta_{i}}\Gamma{(\beta_{i}+1\slash 2)}},
    \qquad \beta_{i} > -1\slash 2,
$$
and in the limiting case of $\beta_{i}=-1\slash 2$, $\Pi_{-1\slash 2}=\big( \eta_{-1} + \eta_1 \big)\slash \sqrt{2\pi}$, with $\eta_{-1}$ and $\eta_{1}$ denoting the point masses at $-1$ and $1$, respectively.

To estimate expressions related to $G_{t}^{\a,\eps}(x,y)$ we will use several technical lemmas which are gathered below. Some of them were obtained elsewhere, but we state them anyway for the sake of completeness and reader's convenience.

To begin with, notice that we have the asymptotics
\begin{equation}\label{cal}
\log\frac{1+\zeta}{1-\zeta}\sim \zeta,\quad\zeta\to 0^{+}
\qquad \textrm{and}\qquad
\log\frac{1+\zeta}{1-\zeta}\sim -\log(1-\zeta),\quad\zeta\to 1^{-}.
\end{equation}

The following result is a compilation of \cite[Lemma 4.1, Lemma 4.2, Lemma 4.4]{Sz}.

\begin{lem}\label{comp}
Let $b\geq 0$ and $c>0$ be fixed. Then for any $j=1,\ldots,d,$ we have
\begin{align*}
& \emph{(a)} \qquad
	|x_{j}\pm y_{j}s_{j}|\leq \sqrt{q_{\pm}(x,y,s)}\qquad \textrm{and}\qquad
|y_{j}\pm x_{j}s_{j}|\leq \sqrt{q_{\pm}(x,y,s)},\\
& \emph{(b)} \qquad
\big(q_{\pm}(x,y,s)\big)^{b}\exp\big(-cAq_{\pm}(x,y,s)\big)\lesssim A^{-b}\exp\Big(\frac{-cA}{2}q_{\pm}(x,y,s)\Big),\\
& \emph{(c)} \qquad
\int_{0}^{1}\zeta^{-3}\log\frac{1+\zeta}{1-\zeta}\,\exp\Big(-\frac{c}{\zeta}q_{+}(x,y,s)\Big)\,d\zeta
\lesssim \frac{1}{q_{+}(x,y,s)},
\end{align*}
uniformly in $x,y \in \R$, $s \in [-1,1]^d$, and also in $A>0$ if (b) is considered.
\end{lem}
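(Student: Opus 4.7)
The plan is to handle the three parts separately, since they are essentially independent estimates of very different flavors, each one elementary.

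For (a), I would derive the algebraic identity
\[
q_{\pm}(x,y,s) = \sum_{i=1}^d (x_i \pm y_i s_i)^2 + \sum_{i=1}^d y_i^2(1-s_i^2),
\]
which follows by adding and subtracting $\sum_i y_i^2 s_i^2$ in the definition of $q_{\pm}$. Since $|s_i|\leq 1$, both sums on the right are nonnegative, so retaining only the $j$-th term of the first sum gives $q_{\pm}(x,y,s) \geq (x_j \pm y_j s_j)^2$. The companion bound involving $y_j \pm x_j s_j$ follows from the same identity with the roles of $x$ and $y$ swapped, which is legitimate since $q_{\pm}(x,y,s) = q_{\pm}(y,x,s)$.

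For (b), writing $u = A q_{\pm}(x,y,s)$ the inequality is equivalent to $u^b e^{-cu/2} \lesssim 1$ on $u \geq 0$, which is immediate from the fact that $u\mapsto u^b e^{-cu/2}$ is bounded on $[0,\infty)$ (its maximum is attained at $u = 2b/c$). For (c), the strategy is to split $\int_0^1 = \int_0^{1/2} + \int_{1/2}^1$ and exploit the two asymptotics recorded in \eqref{cal}. On $(0, 1/2]$ we have $\log\frac{1+\zeta}{1-\zeta} \lesssim \zeta$, so the integrand is dominated by $\zeta^{-2} \exp(-c q_+ / \zeta)$; the substitution $u = 1/\zeta$ converts the integral into $\int_2^\infty e^{-c q_+ u}\, du = (c q_+)^{-1} e^{-2 c q_+} \lesssim 1/q_+$. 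On $[1/2, 1)$ the factor $\zeta^{-3}$ is bounded, $\exp(-c q_+ / \zeta) \leq e^{-c q_+} \lesssim 1/q_+$ (since $v \mapsto v e^{-cv}$ is bounded on $[0,\infty)$), and $\int_{1/2}^1 \log\frac{1+\zeta}{1-\zeta}\, d\zeta$ is a finite constant because $-\log(1-\zeta)$ is integrable at $\zeta = 1$.

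No single step poses a serious obstacle, as the lemma is a collection of standard estimates drawn from \cite{Sz}. Part (c) requires the most care, essentially because the integrand's behavior differs markedly near the two endpoints of $(0,1)$ and one must apply a different device on each subinterval; parts (a) and (b) are each one-line arguments once the right identity or substitution has been isolated.
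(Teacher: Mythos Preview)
Your proof is correct in all three parts. The paper does not give its own argument for this lemma but simply cites \cite[Lemmas 4.1, 4.2, 4.4]{Sz}; your elementary proofs (the algebraic identity for (a), the substitution $u=Aq_{\pm}$ for (b), and the dyadic splitting of $(0,1)$ for (c)) are precisely the kind of standard computations underlying those cited results, so nothing further is needed.
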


\begin{lem}$($\cite[Lemma 1.1]{StTo}$)$ \label{lem5.4}
Given $a>1$, we have
\begin{equation*}
\int_0^1\zeta^{-a}\exp(-T\zeta^{-1})\,d\zeta\lesssim T^{-a+1},
\qquad T>0.
\end{equation*}
\end{lem}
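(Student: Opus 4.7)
The plan is to reduce the integral to a form of the incomplete Gamma function by a single change of variables, and then use a crude uniform bound that loses nothing of size in $T$.

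First, I would substitute $u=T\slash\zeta$, so that $\zeta=T\slash u$ and $d\zeta=-T u^{-2}\,du$. Under this map the interval $\zeta\in(0,1)$ corresponds to $u\in(T,\infty)$, and the integrand transforms as
\begin{equation*}
\zeta^{-a}\exp(-T\zeta^{-1})\,d\zeta
= \bigl(T\slash u\bigr)^{-a}\exp(-u)\cdot \frac{T}{u^{2}}\,du
= T^{1-a}\, u^{a-2}\exp(-u)\,du.
\end{equation*}
Therefore
\begin{equation*}
\int_{0}^{1}\zeta^{-a}\exp(-T\zeta^{-1})\,d\zeta
= T^{1-a}\int_{T}^{\infty} u^{a-2}\exp(-u)\,du.
\end{equation*}

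Next, I would bound the $u$-integral trivially from above by extending the integration range to $(0,\infty)$. Since $a>1$ means $a-2>-1$, the integral
\begin{equation*}
\int_{0}^{\infty} u^{a-2}\exp(-u)\,du = \Gamma(a-1)
\end{equation*}
is a finite constant depending only on $a$. Hence
\begin{equation*}
\int_{0}^{1}\zeta^{-a}\exp(-T\zeta^{-1})\,d\zeta
\leq \Gamma(a-1)\, T^{1-a},
\qquad T>0,
\end{equation*}
which is the desired conclusion.

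There is essentially no obstacle here: the substitution $u=T\slash\zeta$ is the natural scaling that absorbs the dependence on $T$ into the factor $T^{1-a}$, and the hypothesis $a>1$ is used exactly to guarantee integrability of $u^{a-2}$ near $u=0$. The replacement of $\int_{T}^{\infty}$ by $\int_{0}^{\infty}$ costs nothing in the stated estimate because we are only claiming the uniform bound $\lesssim T^{1-a}$; a slightly sharper estimate involving the incomplete Gamma function $\Gamma(a-1,T)$ is available but is not needed for the applications in Section~\ref{ker}.
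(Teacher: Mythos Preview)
Your proof is correct. The paper does not supply its own argument for this lemma; it simply cites \cite[Lemma~1.1]{StTo}, so there is nothing to compare against beyond noting that your substitution $u=T/\zeta$ reducing to $\Gamma(a-1)$ is the standard and expected route.
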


The following result is a slight extension of \cite[Lemma 4.5]{Sz}, the proof being nearly identical.

\begin{lem}\label{lemma4.5}
If $x,y,z\in\R$ are such that $|x-y|>2|x-z|$, then 
\begin{align*}
\frac{1}{4}q_{\pm}(x,y,s)
\leq
q_{\pm}(z,y,s)
\leq
4q_{\pm}(x,y,s),\qquad s\in[-1,1]^d.
\end{align*}
The same holds after exchanging the roles of $x$ and $y$.
\end{lem}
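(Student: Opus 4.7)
The plan is to reduce the estimate to the reverse triangle inequality after rewriting $q_{\pm}$ as the squared Euclidean norm of a suitable vector. Specifically, I would first observe the algebraic identity
\begin{equation*}
q_{\pm}(x,y,s)=\sum_{i=1}^{d}\Bigl[(x_{i}\pm y_{i}s_{i})^{2}+y_{i}^{2}(1-s_{i}^{2})\Bigr],
\end{equation*}
which exhibits $\sqrt{q_{\pm}(\cdot,y,s)}$ as the Euclidean norm in $\mathbb{R}^{2d}$ of the vector with coordinates $(x_{i}\pm y_{i}s_{i})$ and $(y_{i}\sqrt{1-s_{i}^{2}})$. Since swapping $x$ for $z$ only changes the first $d$ coordinates, and does so by exactly $z-x$, the reverse triangle inequality yields
\begin{equation*}
\bigl|\sqrt{q_{\pm}(z,y,s)}-\sqrt{q_{\pm}(x,y,s)}\bigr|\leq |z-x|,\qquad s\in[-1,1]^{d}.
\end{equation*}

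Next I would show the lower bound $\sqrt{q_{\pm}(x,y,s)}\geq |x-y|$ for $x,y\in\mathbb{R}_{+}^{d}$. Indeed, since all $x_{i},y_{i}\geq 0$ and $s_{i}\in[-1,1]$, we have $\pm 2x_{i}y_{i}s_{i}\geq -2x_{i}y_{i}$ for each sign choice, so
\begin{equation*}
q_{\pm}(x,y,s)\geq |x|^{2}+|y|^{2}-2\sum_{i=1}^{d}x_{i}y_{i}=|x-y|^{2}.
\end{equation*}
Combined with the hypothesis, this gives $|x-z|<\tfrac{1}{2}|x-y|\leq \tfrac{1}{2}\sqrt{q_{\pm}(x,y,s)}$.

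Plugging this into the reverse triangle estimate in one direction gives $\sqrt{q_{\pm}(z,y,s)}\leq \tfrac{3}{2}\sqrt{q_{\pm}(x,y,s)}$, hence $q_{\pm}(z,y,s)\leq \tfrac{9}{4}q_{\pm}(x,y,s)\leq 4q_{\pm}(x,y,s)$; in the other direction it gives $\sqrt{q_{\pm}(x,y,s)}\leq \sqrt{q_{\pm}(z,y,s)}+\tfrac{1}{2}\sqrt{q_{\pm}(x,y,s)}$, so $\tfrac{1}{4}q_{\pm}(x,y,s)\leq q_{\pm}(z,y,s)$. The statement obtained by interchanging the roles of $x$ and $y$ follows immediately from the symmetry $q_{\pm}(x,y,s)=q_{\pm}(y,x,s)$.

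The only genuine subtlety — and thus the step most worth flagging — is the use of the hypothesis $x,y\in\mathbb{R}_{+}^{d}$ in deriving the lower bound $\sqrt{q_{\pm}(x,y,s)}\geq |x-y|$; without coordinate positivity the naive bound $\pm 2x_{i}y_{i}s_{i}\geq -2x_{i}y_{i}$ fails in the $q_{+}$ case, so the result is genuinely adapted to the space $(\mathbb{R}_{+}^{d},dw_{\a}^{+},|\cdot|)$. Everything else is a two-line manipulation of the reverse triangle inequality, explaining the author's remark that the proof is nearly identical to that of \cite[Lemma 4.5]{Sz}.
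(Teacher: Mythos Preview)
Your argument is correct. The paper does not actually give a proof of this lemma; it simply cites \cite[Lemma~4.5]{Sz} and remarks that the proof is nearly identical, so there is nothing to compare against in the present text. Your approach---rewriting $q_{\pm}(x,y,s)$ as the squared $\ell^2$-norm of a $2d$-vector, applying the reverse triangle inequality, and using the coordinate-wise positivity of $x,y$ to get $q_{\pm}(x,y,s)\geq |x-y|^2$---is exactly the natural route and in fact yields the sharper constants $1/4$ and $9/4$ (you only need $4$ on the right). One minor quibble: the positivity hypothesis is needed for the lower bound in \emph{both} the $q_{+}$ and $q_{-}$ cases, not only $q_{+}$, since each requires $x_iy_i\geq 0$.
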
 

\begin{lem}$($\cite[Lemma 5.3]{NS3}, \cite[Lemma 4]{NS2}$)$ \label{lem4}
Assume that $\alpha \in [-1\slash 2, \infty)^d$ and let $\delta,\kappa \in [0,\infty)^d$ be fixed.
Then for $x,y\in\R$, $x \neq y$,
\begin{equation*}
(x+y)^{2\delta} \int_{[-1,1]^d}
	 \big(q_{+}(x,y,s)\big)^{-d - |\alpha| -|\delta|}\,\Pi_{\alpha+\delta+\kappa}(ds)
\lesssim \frac{1}{w_{\alpha}^+(B(x,|y-x|))}
\end{equation*}
and
\begin{equation*}
 (x+y)^{2\delta}\int_{[-1,1]^d} 
    \big(q_{+}(x,y,s)\big)^{-d - |\alpha| -|\delta| - 1\slash 2}\,\Pi_{\alpha+\delta+\kappa}(ds)
\lesssim \frac{1}{|x-y|} \;
 \frac{1}{w_{\alpha}^+(B(x,|y-x|))}.
\end{equation*}
\end{lem}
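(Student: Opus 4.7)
The plan is to reduce both inequalities to a single core estimate. The identity $q_+(x,y,s)=|x-y|^2+2\sum_i x_iy_i(1+s_i)$, valid because $x,y\in\R$ and $1+s_i\ge 0$, gives $q_+(x,y,s)\ge |x-y|^2$ for every $s\in[-1,1]^d$. Consequently the factor $q_+^{-1/2}\le |x-y|^{-1}$ can be pulled outside the integral, and the second inequality of the lemma follows from the first. Moreover, from $y_j\le x_j+|x-y|$ one has $(x_j+y_j)^{2\delta_j}\lesssim(x_j+|x-y|)^{2\delta_j}$, and combined with the volume comparison $w_{\a}^+(B(x,r))\simeq r^d\prod_j(x_j+r)^{2\a_j+1}$ from \eqref{esv}, the proof reduces to showing
\begin{equation*}
\int_{[-1,1]^d} q_+(x,y,s)^{-d-|\a|-|\delta|}\,\Pi_{\a+\delta+\kappa}(ds)\lesssim \frac{1}{r^d\prod_{j=1}^d (x_j+r)^{2\a_j+2\delta_j+1}},\qquad r:=|x-y|.
\end{equation*}

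For this reduced bound I would change variables $u_i=1+s_i\in[0,2]$, so that $\Pi_{\a+\delta+\kappa}(ds)\simeq \prod_i u_i^{\a_i+\delta_i+\kappa_i-1/2}(2-u_i)^{\a_i+\delta_i+\kappa_i-1/2}\,du_i$ and $q_+\simeq r^2+\sum_i x_iy_iu_i$. Since $q_+\gtrsim x_jy_j u_j$, the range $u_j\in[1,2]$ contributes only a harmless factor, and the main work lies on the cube $[0,1]^d$. There I would partition the domain according to the sign pattern $\eta\in\{0,1\}^d$, where $\eta_j=1$ means $x_jy_ju_j\ge r^2$; on each subregion $q_+$ is comparable to $r^2+\sum_{j:\eta_j=1}x_jy_ju_j$. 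The integral over each subregion then reduces, via the elementary majorization $q_+^{-\beta}\le r^{-2\beta_0}\prod_j(x_jy_ju_j)^{-\beta_j}$ with $\beta=\beta_0+\sum_j\beta_j$, $\beta_j\ge 0$, $\beta_j<\a_j+\delta_j+\kappa_j+1/2$, to a product of one-dimensional Beta-type integrals of the form $\int_0^c u_j^{\a_j+\delta_j+\kappa_j-1/2-\beta_j}\,du_j$. Choosing $\beta_j$ small when $x_j\lesssim r$ (so the $j$th factor is dominated by $r^2$ and yields a pure power of $r$) and $\beta_j$ close to $\a_j+\delta_j+\kappa_j+1/2$ when $x_j\gtrsim r$ (in which case also $y_j\simeq x_j$, yielding a factor $x_j^{-2\a_j-2\delta_j-1}$) recovers the per-coordinate weight $(x_j+r)^{-2\a_j-2\delta_j-1}$, while the excess exponents combine to give the overall $r^{-d}$.

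The main technical obstacle is the bookkeeping: verifying that the $2^d$ subregional contributions add up to exactly the target geometric factor rather than a looser one, and that for any $\kappa\in[0,\infty)^d$ one can indeed realize a split with $\beta_0\ge\max(0,d/2-|\kappa|)$ while respecting the strict upper bounds on the $\beta_j$'s. An arguably cleaner route, in the spirit of \cite{NS3}, avoids the case analysis by invoking the Gamma representation $q_+^{-\beta}=\Gamma(\beta)^{-1}\int_0^\infty t^{\beta-1}e^{-tq_+}\,dt$: the inner $s$-integration then factorizes into one-dimensional integrals of the form $\int_{-1}^1 e^{-2tx_jy_js_j}(1-s_j^2)^{\a_j+\delta_j+\kappa_j-1/2}\,ds_j$, each expressible in terms of the modified Bessel function $I_{\a_j+\delta_j+\kappa_j}(2tx_jy_j)$, and the bound follows by inserting standard small- and large-argument asymptotics of $I_\nu$ and performing the $t$-integration. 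In either approach the boundary case $\a_i=-1/2$ has to be treated separately, as $\Pi_{-1/2}=(\eta_{-1}+\eta_{1})/\sqrt{2\pi}$ is then atomic and the corresponding integration collapses to a finite sum handled by direct inspection.
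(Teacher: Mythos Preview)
The paper does not give its own proof of this lemma; it is quoted verbatim from \cite[Lemma~5.3]{NS3} and \cite[Lemma~4]{NS2}, so there is no in-paper argument to compare against. What you should know is that your second proposed route---the Gamma representation $q_+^{-\beta}=\Gamma(\beta)^{-1}\int_0^\infty t^{\beta-1}e^{-tq_+}\,dt$ followed by factorization of the $s$-integral into modified Bessel functions---is precisely the method used in \cite{NS3} (and earlier in \cite{NS1}), so in that branch your proposal coincides with the original proof.

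Your preliminary reductions are correct and standard: the identity $q_+(x,y,s)=|x-y|^2+2\sum_i x_iy_i(1+s_i)\ge|x-y|^2$ does reduce the second estimate to the first, and the passage from $(x+y)^{2\delta}$ to $\prod_j(x_j+r)^{2\delta_j}$ together with \eqref{esv} is exactly how one converts the right-hand side into the ball-volume form. The first, ``direct'' approach you sketch (partitioning $[0,1]^d$ by the sign pattern of $x_jy_ju_j-r^2$ and splitting the exponent as $\beta=\beta_0+\sum_j\beta_j$) can also be pushed through, but the obstacle you flag is real: taking $\beta_j$ \emph{strictly} below $\a_j+\delta_j+\kappa_j+\tfrac12$ leaves a residual $\epsilon$-power that must be reabsorbed, and doing so uniformly in the $2^d$ subregions requires an extra dyadic summation rather than a single choice of exponents. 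The Bessel route avoids this entirely because the large-argument asymptotic $I_\nu(z)\sim(2\pi z)^{-1/2}e^{z}$ gives the sharp decay without any $\epsilon$-loss, which is why the cited references prefer it.
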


\begin{lem}\label{F2}
Let $\gamma>0$ be fixed. On the set $\{(x,y,z)\in\R\times\R\times\R : |x-y|>2|x-z|\}$ we have  
\begin{align*}
\bigg(\frac{1}{|z-y|}\bigg)^{\gamma} \;
\frac{1}{w_{\alpha}^{+}(B(z,|z-y|))}
\simeq
\bigg(\frac{1}{|x-y|}\bigg)^{\gamma} \;
\frac{1}{w_{\alpha}^{+}(B(x,|y-x|))}.
\end{align*}
\end{lem}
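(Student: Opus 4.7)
The plan is to reduce the lemma to the explicit description of the $w_\alpha^+$-measure of a ball in terms of the product formula \eqref{esv}, which in turn reduces everything to purely one-dimensional comparisons.

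First I would use the hypothesis $|x-y|>2|x-z|$ together with the triangle inequality to obtain
\[
\tfrac{1}{2}|x-y| \;<\; |z-y| \;<\; \tfrac{3}{2}|x-y|,
\]
so $|z-y|\simeq|x-y|$ uniformly on the region under consideration. This disposes of the $|\cdot|^{-\gamma}$ factors on both sides and reduces the lemma to establishing the comparability
\[
w_\alpha^+\bigl(B(z,|z-y|)\bigr)\;\simeq\; w_\alpha^+\bigl(B(x,|y-x|)\bigr).
\]

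To handle this, I would invoke the fact that balls and cubes of comparable radii have comparable $w_\alpha^+$-measures, together with the product estimate \eqref{esv}, which yields
\[
w_\alpha^+\bigl(B(u,r)\bigr)\;\simeq\;r^{d}\prod_{j=1}^{d}(u_j+r)^{2\alpha_j+1},\qquad u\in\R,\; r>0.
\]
Since $|z-y|^d\simeq|x-y|^d$ by the first step, the problem collapses to proving the coordinatewise comparison
\[
z_j+|z-y|\;\simeq\;x_j+|x-y|,\qquad j=1,\ldots,d,
\]
with constants independent of $x,y,z$.

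The main (and only) remaining point is this one-dimensional comparison, which I would settle by splitting into two cases according to whether $x_j\leq|x-y|$ or $x_j>|x-y|$. In the first case, $x_j+|x-y|\simeq|x-y|$, and the bound $|z_j-x_j|\leq|x-z|<\tfrac12|x-y|$ together with $|z-y|\simeq|x-y|$ gives immediately $z_j+|z-y|\simeq|x-y|$ as well. In the second case, $x_j>|x-y|>2|x-z|\geq 2|x_j-z_j|$, so $z_j\geq\tfrac12 x_j$ and $z_j\leq\tfrac{3}{2}x_j$, while $|z-y|\leq\tfrac{3}{2}|x-y|<\tfrac{3}{2}x_j$; hence both sides are comparable to $x_j$. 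Combining the two cases and taking the product over $j$ gives the ball-measure comparison, and hence the lemma. I do not foresee a genuine obstacle here; the only point requiring a little care is the case split in the coordinatewise comparison, which must be performed uniformly in $x_j$ in order to cover all positions (including $x_j$ very small) in $\R$.
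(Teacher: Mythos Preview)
Your argument is correct, but it takes a more computational route than the paper. Both proofs begin identically, using the triangle inequality to obtain $\tfrac12|x-y|\le|z-y|\le\tfrac32|x-y|$. From this point the paper simply invokes the doubling property of $w_\alpha^+$: since $|x-z|<\tfrac12|x-y|$ and $|z-y|\simeq|x-y|$, each of the two balls $B(x,|x-y|)$ and $B(z,|z-y|)$ is contained in a fixed dilate of the other, and doubling gives the measure comparison in one stroke. You instead unpack the measure via the product formula \eqref{esv} and carry out a case split on each coordinate. Your approach has the minor advantage of being fully self-contained and explicit, but the paper's argument is shorter, works verbatim for any doubling measure, and avoids the coordinatewise bookkeeping altogether.
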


\begin {proof}
Observe that
\begin{align*}
\frac{1}{2}|y-x|
\leq
|y-x|-|x-z|
\leq
|y-z|\leq
|y-x|+|x-z|
\leq\frac{3}{2}|y-x|.
\end{align*}
Now the conclusion is an easy consequence of the doubling property of the measure $w_{\a}^{+}$.
\end {proof}

To state the next lemma, and also to perform the relevant kernel estimates, we will use the same abbreviations as in \cite{Sz},
$$
\lo(\zeta)=\log\frac{1+\zeta}{1-\zeta}, \qquad \e\b=\exp\Big(-{\frac{1}{4\zeta}}q_{+}(x,y,s)-\frac{\zeta}{4}q_{-}(x,y,s)\Big).
$$
Furthermore, we will often neglect the set of integration $[-1,1]^{d}$ in integrals against $\Pi_{\alpha}$ and write shortly $q_{\pm}$ omitting the arguments.

\begin{lem}\label{lemat}
Assume that $\a\in[-1\slash 2,\infty)^{d}$ and $\xi,\rho,\eps\in\Z$ are fixed and such that $\xi\leq\eps$, $\rho\leq\eps$. Given $C>0$ and $u\in \mathbb{R}$, consider the function acting on $\R\times\R\times (0,1)$ and defined by
\begin{align*}
p_{u}(x,y,\z)=
\sqrt{1-\z^2}\,\z^
{-d-|\a|-|\eps|+|\xi|\slash 2+|\rho|\slash 2-u\slash 2}
\,x^{\eps-\xi}y^{\eps-\rho}\int_{[-1,1]^{d}}\big(\e\b\big)^{C}\,\Pi_{\a+\eps}(ds).
\end{align*}
\begin{itemize}
\item[(a)] If $u\geq 1$, then we have the estimate
\begin{align*}
\|p_{u}(x,y,\z(t))\|_{L^2(dt)}
&\lesssim
\frac{1}{|x-y|^{u-1}} \;
\frac{1}{w^{+}_{\alpha}(B(x,|y-x|))},\qquad x\ne y,
\end{align*}
where $t$ and $\z$ are related as in $\eqref{zz}$.
\item[(b)] If $u\geq 2$, then we also have
\begin{align*}
\|p_{u}(x,y,\z(t))\|_{L^2(tdt)}
&\lesssim
\frac{1}{|x-y|^{u-2}} \;
\frac{1}{w^{+}_{\alpha}(B(x,|y-x|))},\qquad x\ne y.
\end{align*}
\end{itemize}
\end{lem}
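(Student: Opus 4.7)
The plan is to estimate the $L^{2}$-in-$t$ norm of $p_{u}$ fiberwise in the Bessel variable $s\in[-1,1]^{d}$ via Minkowski's integral inequality, and then match the resulting Bessel integral against Lemma \ref{lem4}. The crucial observation is that the naive alternative, a Cauchy-Schwarz bound $(\int\cdot\,d\Pi)^{2}\leq\Pi_{\a+\eps}([-1,1]^{d})\int(\cdot)^{2}\,d\Pi$, would square the prefactor to $x^{2(\eps-\xi)}y^{2(\eps-\rho)}$. The only componentwise domination of the squared prefactor by $(x+y)^{2\delta}$ would then force $\delta_{i}=2\eps_{i}-\xi_{i}-\rho_{i}$, violating the constraint $\delta\leq\eps$ that the index $\a+\delta+\kappa=\a+\eps$ in Lemma \ref{lem4} imposes. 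Minkowski preserves $x^{\eps-\xi}y^{\eps-\rho}$ un-squared and makes the correct splitting available.

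Concretely, I first change variables $t\mapsto\z=\tanh t$, so $dt=d\z\slash(1-\z^{2})$; the $\sqrt{1-\z^{2}}$ inside $p_{u}$ cancels the Jacobian after squaring. Minkowski then yields
\begin{align*}
\|p_{u}(x,y,\z(t))\|_{L^{2}(dt)}\leq\int_{[-1,1]^{d}}x^{\eps-\xi}y^{\eps-\rho}\Big(\int_{0}^{1}\z^{2A}\big(\e\b\big)^{2C}\,d\z\Big)^{1\slash 2}\,\Pi_{\a+\eps}(ds),
\end{align*}
where $A=-d-|\a|-|\eps|+(|\xi|+|\rho|)\slash 2-u\slash 2$. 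Using $e^{-C\z q_{-}\slash 2}\leq 1$ and applying Lemma \ref{lem5.4} with $a=-2A$ (the hypothesis $a>1$ is automatic from $d+|\a|\geq d\slash 2\geq 1\slash 2$ and $u\geq 1$), the inner $\z$-integral is $\lesssim q_{+}^{2A+1}$, so
\begin{align*}
\|p_{u}\|_{L^{2}(dt)}\lesssim x^{\eps-\xi}y^{\eps-\rho}\int q_{+}^{A+1\slash 2}\,d\Pi_{\a+\eps}.
\end{align*}
For part (b), the extra factor $t\simeq\lo(\z)$ inside the $\z$-integral is handled by splitting at $\z=1\slash 2$: on $(0,1\slash 2)$ the bound $\lo(\z)\lesssim\z$ combined with Lemma \ref{lem5.4} with exponent shifted by one (requiring $A<-1$, automatic for $u\geq 2$) gives $\lesssim q_{+}^{2A+2}$; on $(1\slash 2,1)$ the factor $\z^{2A}\lo(\z)$ is integrable and $e^{-Cq_{+}\slash(2\z)}\leq e^{-cq_{+}}\lesssim q_{+}^{2A+2}$, where the last inequality follows because the exponential beats any polynomial for large $q_{+}$ while $q_{+}^{2A+2}\geq 1$ for $q_{+}\leq 1$. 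Altogether $\|p_{u}\|_{L^{2}(tdt)}\lesssim x^{\eps-\xi}y^{\eps-\rho}\int q_{+}^{A+1}\,d\Pi_{\a+\eps}$.

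To finish, I bound $x^{\eps-\xi}y^{\eps-\rho}\leq(x+y)^{2\delta}$ componentwise with $\delta_{i}=\eps_{i}-(\xi_{i}+\rho_{i})\slash 2$ and set $\kappa_{i}=(\xi_{i}+\rho_{i})\slash 2$; these satisfy $\delta+\kappa=\eps$ and $\delta,\kappa\geq 0$, so Lemma \ref{lem4} is applicable with the matching measure $\Pi_{\a+\delta+\kappa}=\Pi_{\a+\eps}$. Using the pointwise bound $q_{+}(x,y,s)\geq|x-y|^{2}$, which is immediate from $s\in[-1,1]^{d}$ and $x_{i}y_{i}\geq 0$, I split
\begin{align*}
q_{+}^{-N}\leq q_{+}^{-N_{0}}|x-y|^{-2(N-N_{0})},\qquad N_{0}=d+|\a|+|\delta|,
\end{align*}
with $N-N_{0}=(u-1)\slash 2$ in part (a) and $N-N_{0}=(u-2)\slash 2$ in part (b), both non-negative by hypothesis. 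Lemma \ref{lem4}(i) applied to the remaining Bessel integral absorbs $(x+y)^{2\delta}$ and produces $1\slash w^{+}_{\alpha}(B(x,|y-x|))$, delivering the advertised $|x-y|^{-(u-1)}\slash w^{+}_{\alpha}(B)$ and $|x-y|^{-(u-2)}\slash w^{+}_{\alpha}(B)$. The principal obstacle is the opening Minkowski manoeuvre together with the correct choice of $(\delta,\kappa)$; once these are in place, the remainder is a mechanical matching with the technical lemmas already established.
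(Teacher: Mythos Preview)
Your proof is correct and follows essentially the same route as the paper: change variables to $\zeta$, apply Minkowski's integral inequality, estimate the resulting $\zeta$-integral to produce a power of $q_{+}$, and then invoke Lemma \ref{lem4} with the same choice $\delta=\eps-(\xi+\rho)\slash 2$, $\kappa=(\xi+\rho)\slash 2$. The only technical difference is in how the $\zeta$-integral is handled: the paper first applies Lemma \ref{comp}(b) to trade all but two powers of $\zeta^{-1}$ for powers of $q_{+}^{-1}$ and then uses Lemma \ref{lem5.4} with the fixed exponent $a=2$ (respectively Lemma \ref{comp}(c) for part (b)), whereas you apply Lemma \ref{lem5.4} directly with the larger exponent $a=-2A$ and, for (b), split at $\zeta=1\slash 2$ instead of using Lemma \ref{comp}(c). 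Both routes give the same power $q_{+}^{A+1\slash 2}$ (resp.\ $q_{+}^{A+1}$) and the remainder of the argument is identical.
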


\begin {proof}
We start with proving the first estimate.  
Changing the variable according to \eqref{zz} and then using sequently the Minkowski integral inequality, Lemma \ref{comp} (b) (applied with $b=2d+2|\alpha|+2|\eps|-|\xi|-|\rho|+u-2$, $c=C\slash 4$, $A=\zeta^{-1}$), Lemma \ref{lem5.4} (with $a=2$ and $T=\frac{Cq_{+}}{4}$) and the inequality $|x-y|^2\leq q_{+}$, we obtain
\begin{align*}
\|p&_{u}(x,y,\z(t))\|_{L^2(dt)}\\
&=
x^{\eps-\xi}y^{\eps-\rho}
\bigg(\int_{0}^{1}\Big(\frac{1}{\z}\Big)^{2d+2|\a|+2|\eps|-|\xi|-|\rho|+u}
\bigg(\int\big(\e\b\big)^{C}\,\Pi_{\a+\eps}(ds)\bigg)^{2}\,d\z
\bigg)^{1\slash 2}\\
&\leq
x^{\eps-\xi}y^{\eps-\rho}
\int\bigg(\int_{0}^{1}\Big(\frac{1}{\z}\Big)^{2d+2|\a|+2|\eps|-|\xi|-|\rho|+u}
\big(\e\b\big)^{2C}\,d\z
\bigg)^{1\slash 2}\,\Pi_{\a+\eps}(ds)\\
&\lesssim
x^{\eps-\xi}y^{\eps-\rho}
\int(q_{+})^{-d-|\a|-|\eps|+|\xi|\slash 2+|\rho|\slash 2+1-u\slash 2}
\bigg(\int_{0}^{1}\Big(\frac{1}{\z}\Big)^{2}\big(\e\b\big)^{C}\,d\z
\bigg)^{1\slash 2}\,\Pi_{\a+\eps}(ds)\\
&\lesssim
x^{\eps-\xi}y^{\eps-\rho}
\int(q_{+})^{-d-|\a|-|\eps|+|\xi|\slash 2+|\rho|\slash 2+1\slash 2-u\slash 2}
\,\Pi_{\a+\eps}(ds)\\
&\lesssim
\frac{1}{|x-y|^{u-1}}
(x+y)^{2\eps-\xi-\rho}
\int(q_{+})^{-d-|\a|-|\eps-\xi\slash 2-\rho\slash 2|}
\,\Pi_{\a+\eps}(ds).
\end{align*}
Now an application of Lemma \ref{lem4} (with $\delta=\eps-\xi\slash 2-\rho\slash 2$ and $\kappa=\xi\slash 2+\rho\slash 2$) leads to the desired conclusion.

Similar arguments (using this time Lemma \ref{comp} (c) instead of Lemma \ref{lem5.4}) justify the second estimate.
\end {proof}

\begin{lem}$($\cite[Lemma 4.7]{Sz}$)$\label{wch}
Let $F\colon (0,\infty)\times[-1,1]^{d}\to\mathbb{R}$ be a function such that $F(\cdot,s)$ is continuously differentiable for each fixed $s$, and $F(z,\cdot)\in L^1(\Pi_{\a}(ds))$ for any $z>0$. Further, assume that for each $v>0$ there exists $a<v<b$ and a function $f_{a,b}\in L^{1}(\Pi_{\a}(ds))$ such that $|\partial_{z}F(z,s)|\leq f_{a,b}(s)$ for all $z\in[a,b]$ and $s\in[-1,1]^{d}$. Then 
$$
\partial_{z}\int_{[-1,1]^{d}}F(z,s)\,\Pi_{\a}(ds)=
\int_{[-1,1]^{d}}\partial_{z}F(z,s)\,\Pi_{\a}(ds),\qquad z>0.
$$
\end{lem}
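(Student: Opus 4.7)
The plan is straightforward: apply the dominated convergence theorem to the difference quotient of the integral, with pointwise convergence supplied by the mean value theorem and the continuity of $\partial_z F(\cdot,s)$, and with the dominating function provided directly by the hypothesis. The assumptions are tailored precisely so that this mechanism goes through on each bounded subinterval around a given point.

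In detail, I would fix an arbitrary $v>0$ and invoke the hypothesis to produce $a<v<b$ and $f_{a,b}\in L^{1}(\Pi_{\a}(ds))$ with $|\partial_{z}F(z,s)|\leq f_{a,b}(s)$ for every $(z,s)\in[a,b]\times[-1,1]^{d}$. For $h\ne 0$ small enough that $v+h\in[a,b]$, the integrability of $F(v,\cdot)$ and $F(v+h,\cdot)$ allows me to rewrite the difference quotient inside the integral,
$$
\frac{1}{h}\bigg(\int_{[-1,1]^{d}}F(v+h,s)\,\Pi_{\a}(ds)-\int_{[-1,1]^{d}}F(v,s)\,\Pi_{\a}(ds)\bigg)=\int_{[-1,1]^{d}}\frac{F(v+h,s)-F(v,s)}{h}\,\Pi_{\a}(ds).
$$
By the mean value theorem applied to the $C^{1}$ map $z\mapsto F(z,s)$ on $[a,b]$, for each $s$ there exists $\theta=\theta(h,s)$ lying between $v$ and $v+h$ such that $[F(v+h,s)-F(v,s)]/h=\partial_{z}F(\theta,s)$. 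Because $\theta\in[a,b]$, the integrand is pointwise bounded by the $\Pi_{\a}$-integrable function $f_{a,b}(s)$, uniformly in small $h$, while the continuity of $\partial_{z}F(\cdot,s)$ gives $\partial_{z}F(\theta,s)\to\partial_{z}F(v,s)$ as $h\to 0$ for every $s$. The dominated convergence theorem then yields
$$
\lim_{h\to 0}\int_{[-1,1]^{d}}\frac{F(v+h,s)-F(v,s)}{h}\,\Pi_{\a}(ds)=\int_{[-1,1]^{d}}\partial_{z}F(v,s)\,\Pi_{\a}(ds),
$$
which is the claimed identity at $v$; since $v>0$ was arbitrary, the lemma follows.

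There is no genuine obstacle here: this is the classical Leibniz rule for differentiation under the integral sign, and the hypotheses (local $L^{1}$ domination of $\partial_{z}F$ together with $C^{1}$-regularity in $z$) are exactly the ingredients needed by DCT. The only minor verification is that the difference of the two integrals equals the integral of the difference quotient, which is immediate from the assumed $\Pi_{\a}$-integrability of $F(z,\cdot)$ for every fixed $z$.
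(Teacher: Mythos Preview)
Your argument is correct and is the standard dominated convergence proof of the Leibniz rule for differentiation under the integral sign. The paper itself gives no proof of this lemma; it is quoted verbatim from \cite[Lemma~4.7]{Sz}, so there is nothing to compare against beyond noting that your approach is exactly the classical one the hypotheses are designed for.
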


In what follows it is convenient to use the following notation.
Given $x,y\in\R$, we write $x\leq y$ if $x_{j}\leq y_{j}$ for each $j=1,\ldots,d$. We denote by $\max\{x,y\}$ the point in $\R$ having the coordinates $\max\{x_{j},y_{j}\}$, $j=1,\ldots,d$, and similarly for $\min\{x,y\}$.

\subsection{Vertical $g$-function based on $\mathbf{\{T_{t}^{\alpha,\eps,+}\}}$}\label{k1}

\begin {proof}[Proof of Theorem \ref{kes}; the case of $g_{V}^{\eps,+}$.]
We first deal with the growth condition. Differentiating \eqref{hk} in $t$ (passing with $\partial_{t}$ under the integral sign can be easily justified by Lemma \ref{wch}, see \cite[Section 4]{Sz}) we get
\begin{align}\label{eq1}
\partial_{t}G_{t}^{\alpha,\eps}(x,y)=
-\frac{1}{2^{d}}(xy)^{\eps}\Big(\frac{1-\z^2}{2\z}\Big)^{d+|\a|+|\eps|}h(x,y,\z),
\end{align}
where the auxiliary function $h$ is given by
\begin{align*}
h(x,y,\z)=&(d+|\a|+|\eps|)\frac{1+\z^2}{\z}\int\e\b\,\Pi_{\a+\eps}(ds)\\
&+\frac{1-\z^2}{\z}\int\e\b
\Big[\frac{\z}{4}q_{-}-\frac{1}{4\z}q_{+}\Big]\,\Pi_{\a+\eps}(ds).
\end{align*}
Notice that the function $h$ depends on $\a$ and $\eps$, but to shorten the notation we do not indicate that explicitly (a similar convention will concern other auxiliary functions appearing in the sequel).

Using Lemma \ref{comp} (b) (first with $b=1$, $c=1\slash 4$, $A=\z$ and then with $b=1$, $c=1\slash 4$, $A=\z^{-1}$) we obtain
\begin{align}\nonumber
|h(x,y,\z)|
&\lesssim
\z^{-1}\int\e\b\,\Pi_{\a+\eps}
+\z^{-1}\int\e\b \Big[\z q_{-}+\frac{q_{+}}{\z}\Big]\,\Pi_{\a+\eps}(ds)\\\label{nier1}
&\lesssim
\z^{-1}\int\big(\e\b\big)^{1\slash 2}\,\Pi_{\a+\eps}(ds).
\end{align}
This, in view of \eqref{eq1}, gives
\begin{align}\label{nier2}
|\partial_{t}G_{t}^{\alpha,\eps}(x,y)|\lesssim
\sqrt{1-\z^2}\,\z^{-d-|\a|-|\eps|-1}\,(xy)^{\eps}\int\big(\e\b\big)^{1\slash 2}\,\Pi_{\a+\eps}(ds).
\end{align}
Finally, Lemma \ref{lemat} (b) (specified to $u=2$, $\xi=\rho=0$) leads directly to the desired bound.

We pass to proving the smoothness estimates. By symmetry reasons, it suffices to show that
$$
\big\|\partial_{t}G_{t}^{\a,\eps}(x,y)-\partial_{t}G_{t}^{\a,\eps}(x',y)\big\|_{L^2(tdt)}
\lesssim
\frac{|x-x'|}{|x-y|} \;
\frac{1}{w^{+}_{\a}(B(x,|y-x|))},\qquad |x-y|>2|x-x'|.
$$
By the mean value theorem 
\begin{align*}
\big|\partial_{t}G_{t}^{\a,\eps}(x,y)-\partial_{t}G_{t}^{\a,\eps}(x',y)\big|
\leq&
\,|x-x'|
\big|\nabla_{\!x}\partial_{t}G_{t}^{\a,\eps}(\t,y)\big|,
\end{align*}
where $\t$ is a convex combination of $x$ and $x'$ that depends also on $t$. Thus our task reduces to proving that 
\begin{align*}
\|\partial_{x_{i}}\partial_{t}G_{t}^{\a,\eps}(\t,y)\|_{L^{2}(tdt)}\lesssim
\frac{1}{|x-y|} \;
\frac{1}{w^{+}_{\a}(B(x,|y-x|))},\qquad |x-y|>2|x-x'|,
\end{align*}
for each $i=1,\ldots,d$. To proceed we first analyze the derivative
\begin{align*}
\partial_{x_{i}}\partial_{t}G_{t}^{\a,\eps}(x,y)=
&-\frac{1}{2^{d}}(xy)^{\eps}
\Big(\frac{1-\z^{2}}{2\z}\Big)^{d+|\a|+|\eps|}
\partial_{x_{i}}h(x,y,\z)\\
&-\chi_{\{\eps_{i}=1\}}
\frac{1}{2^{d}}x^{\eps-e_{i}}y^{\eps}
\Big(\frac{1-\z^{2}}{2\z}\Big)^{d+|\a|+|\eps|}
h(x,y,\z).
\end{align*}
An elementary computation shows that
\begin{align*}
\partial_{x_{i}}h(x,y,\z)=&
-(d+|\a|+|\eps|)\frac{1+\z^2}{\z}\int\e\b\Big[
\frac{1}{2\z}(x_{i}+y_{i}s_{i})+\frac{\z}{2}(x_{i}-y_{i}s_{i})\Big]\,\Pi_{\a+\eps}(ds)\\
&-
\frac{1-\z^2}{\z}\int\e\b\Big[
\frac{1}{2\z}(x_{i}+y_{i}s_{i})+\frac{\z}{2}(x_{i}-y_{i}s_{i})\Big]
\Big[\frac{\z}{4}q_{-}-\frac{1}{4\z}q_{+}\Big]\,\Pi_{\a+\eps}(ds)\\
&+
\frac{1-\z^2}{\z}\int\e\b \Big[\frac{\z}{2}(x_{i}-y_{i}s_{i})-\frac{1}{2\z}(x_{i}+y_{i}s_{i})\Big]\,\Pi_{\a+\eps}(ds).
\end{align*}
Applying Lemma \ref{comp} (a) and then repeatedly Lemma \ref{comp} (b) (specified to $b=1\slash 2$ or $b=1$) we get
\begin{align*}
|\partial_{x_{i}}h(x,y,\z)|
\lesssim\,&
\z^{-1}\int\e\b \Big[
\frac{\sqrt{q_{+}}}{\z}+\z \sqrt{q_{-}}\Big]\,\Pi_{\a+\eps}(ds)\\
&+
\z^{-1}\int\e\b \Big[
\frac{\sqrt{q_{+}}}{\z}+\z \sqrt{q_{-}}\Big]
\Big[\z q_{-}+\frac{q_{+}}{\z}\Big]
\,\Pi_{\a+\eps}(ds)\\
\lesssim\,&
\z^{-3\slash 2}\int\big(\e\b\big)^{1\slash 4}\,\Pi_{\a+\eps}(ds).
\end{align*}
Denote $x^{*}=\max\{x,x'\}$ and observe that $\t\leq x^{*}$ and also $|x-x^{*}|\leq|x-x'|$. Then using the last estimate of $\partial_{x_{i}}h(x,y,\z)$,  \eqref{nier1} and then Lemma \ref{lemma4.5} (first with $z=\t$ and then with $z=x^{*}$) produces
\begin{align}\label{nier3}
|\partial&_{x_{i}}\partial_{t}G_{t}^{\a,\eps}(\t,y)|\\\nonumber
\lesssim&
\sqrt{1-\z^2}\,\z^{-d-|\a|-|\eps|-3\slash 2}\,(\t y)^{\eps}\int\big(\e\c\big)^{1\slash 4}\,\Pi_{\a+\eps}(ds)\\\nonumber
&+
\chi_{\{\eps_{i}=1\}}\sqrt{1-\z^{2}}\,\z^{-d-|\a|-|\eps|-1}\,
\t^{\eps-e_{i}}y^{\eps}\int\big(\e\c\big)^{1\slash 2}\,\Pi_{\a+\eps}(ds)\\\nonumber
\lesssim&
\sqrt{1-\z^2}\,\z^{-d-|\a|-|\eps|-3\slash 2}\,(x^{*} y)^{\eps}\int\big(\e\eee\big)^{1\slash 64}\,\Pi_{\a+\eps}(ds)\\\nonumber
&+
\chi_{\{\eps_{i}=1\}}\sqrt{1-\z^{2}}\,\z^{-d-|\a|-|\eps|-1}\,
(x^{*})^{\eps-e_{i}}y^{\eps}\int\big(\e\eee\big)^{1\slash 32}\,\Pi_{\a+\eps}(ds),
\end{align}
provided that $|x-y|>2|x-x'|$. Now Lemma \ref{lemat} (b) (taken with $u=3$, $\xi=\rho=0$ and $\xi=e_{i}$, $\rho=0$) combined with Lemma \ref{F2} (specified to $z=x^{*}$) gives the desired smoothness condition. 

The proof of the case of $g_{V}^{\eps,+}$ in Theorem \ref{kes} is finished.
\end {proof}

\subsection{Horizontal $g$-functions based on  $\mathbf{\{T_{t}^{\a,\eps,+}\}}$}\label{k2}

\begin {proof}[Proof of Theorem \ref{kes}; the case of $g_{H}^{j,\eps,+}$.]
To compute $\delta_{j,x}G_{t}^{\a,\eps}(x,y)$, observe that $\delta_{j,x}$ may be replaced either by $\delta_{j,x}^{e}$ or $\delta_{j,x}^{o}$ (see \cite[p.\,548]{NS3}),
\begin{align*}
\delta_{j,x}^{e}=\partial_{x_{j}}+x_{j},\qquad \delta_{j,x}^{o}=\partial_{x_{j}}+x_{j}+\frac{2\a_{j}+1}{x_{j}},
\end{align*}
depending on whether $\eps_{j}=0$ or $\eps_{j}=1$, respectively. Then we see that
\begin{align*}
\delta_{j,x}G_{t}^{\a,\eps}(x,y)=&\frac{1}{2^d}\Big(\frac{1-\z^2}{2\z}\Big)^{d+|\a|+|\eps|}
h_{j}(x,y,\z),
\end{align*}
where the auxiliary functions $h_{j}$ are given by
\begin{align*}
h_{j}(x,y,\z)=&
-(xy)^{\eps}
\int\e\b\Big[\frac{1}{2\z}(x_{j}+y_{j}s_{j})+\frac{\z}{2}(x_{j}-y_{j}s_{j})\Big]
\,\Pi_{\a+\eps}(ds)\\
&+
x_{j}(xy)^{\eps}\int\e\b\,\Pi_{\a+\eps}(ds)\\
&+
\chi_{\{\eps_{j}=1\}}(2\a_{j}+2)
x^{\eps-e_{j}}y^{\eps}
\int\e\b\,\Pi_{\a+\eps}(ds).
\end{align*}
Using Lemma \ref{comp} (a), the fact that $x_{j}\leq \sqrt{q_{+}}+\sqrt{q_{-}}$ and then Lemma \ref{comp} (b) (taken with $b=1\slash 2$, $A=\z^{-1}$ and $A=\z$) we obtain
\begin{align}\nonumber
|\delta_{j,x}G_{t}^{\a,\eps}(x,y)|
\lesssim&
\sqrt{1-\z^{2}}\,\z^{-d-|\a|-|\eps|}(xy)^{\eps}\int\e\b
\Big[\frac{\sqrt{q_{+}}}{\z}+\z \sqrt{q_{-}}\Big]\,\Pi_{\a+\eps}(ds)\\\nonumber
&+
\sqrt{1-\z^2}\,\z^{-d-|\a|-|\eps|}(xy)^{\eps}\int
\e\b(\sqrt{q_{+}}+\sqrt{q_{-}})\,\Pi_{\a+\eps}(ds)\\\nonumber
&+
\chi_{\{\eps_{j}=1\}}\sqrt{1-\z^2}\,\z^{-d-|\a|-|\eps|}x^{\eps-e_{j}}y^{\eps}
\int\e\b\,\Pi_{\a+\eps}(ds)\\\label{nier8}
\lesssim&
\sqrt{1-\z^{2}}\,\z^{-d-|\a|-|\eps|-1\slash 2}(xy)^{\eps}\int\big(\e\b\big)^{1\slash 2}\,\Pi_{\a+\eps}(ds)\\\nonumber
&+
\chi_{\{\eps_{j}=1\}}\sqrt{1-\z^2}\,\z^{-d-|\a|-|\eps|}x^{\eps-e_{j}}y^{\eps}
\int\e\b\,\Pi_{\a+\eps}(ds).
\end{align}
Now an application of Lemma \ref{lemat} (a) (specified to $u=1$, $\xi=\rho=0$ and $\xi=e_{j}$, $\rho=0$) leads to the growth condition for $\big\{\delta_{j,x}G_{t}^{\a,\eps}(x,y)\big\}_{t>0}$.

To prove the smoothness estimates we first show that
\begin{align*}
\big\|\delta_{j,x}G_{t}^{\a,\eps}(x,y)-\delta_{j,x}G_{t}^{\a,\eps}(x',y)\big\|_{L^{2}(dt)}
&\lesssim
\frac{|x-x'|}{|x-y|} \;
\frac{1}{w^{+}_{\a}(B(x,|y-x|))},\qquad |x-y|>2|x-x'|.
\end{align*}
Using the mean value theorem we get
$$
\big|\delta_{j,x}G_{t}^{\a,\eps}(x,y)-\delta_{j,x}G_{t}^{\a,\eps}(x',y)\big|
\leq
|x-x'|\big|\nabla_{\!x}\delta_{j,x}G_{t}^{\a,\eps}(\t,y)\big|,
$$
where $\t$ is a convex combination of $x$ and $x'$ (notice that $\t$ depends also on $t$). Thus it suffices to show that for any $i,j=1,\ldots,d,$
\begin{align}\label{nier4}
\|\partial_{x_{i}}\delta_{j,x}G_{t}^{\a,\eps}(\t,y)\|_{L^{2}(dt)}
\lesssim
\frac{1}{|x-y|} \;
\frac{1}{w^{+}_{\a}(B(x,|y-x|))},\qquad |x-y|>2|x-x'|.
\end{align}
We shall first estimate $\partial_{x_{i}}h_{j}(x,y,\z)$. It is convenient to distinguish two cases.
\newline
{\bf Case 1:} $\mathbf {i\ne j.}$ An elementary computation produces
\begin{align*}
\partial&_{x_{i}}h_{j}(x,y,\z)\\
=&
\,(xy)^{\eps}\int\e\b\Big[\frac{1}{2\z}(x_{i}+y_{i}s_{i})+\frac{\z}{2}(x_{i}-y_{i}s_{i})\Big]
\Big[\frac{1}{2\z}(x_{j}+y_{j}s_{j})+\frac{\z}{2}(x_{j}-y_{j}s_{j})\Big]
\,\Pi_{\a+\eps}(ds)\\
&-
x_{j}(xy)^{\eps}\int\e\b\Big[\frac{1}{2\z}(x_{i}+y_{i}s_{i})+\frac{\z}{2}(x_{i}-y_{i}s_{i})\Big]
\,\Pi_{\a+\eps}(ds)\\
&-
\chi_{\{\eps_{j}=1\}}\,(2\a_{j}+2)x^{\eps-e_{j}}y^{\eps}\int\e\b
\Big[\frac{1}{2\z}(x_{i}+y_{i}s_{i})+\frac{\z}{2}(x_{i}-y_{i}s_{i})\Big]
\,\Pi_{\a+\eps}(ds)\\
&-
\chi_{\{\eps_{i}=1\}}\,x^{\eps-e_{i}}y^{\eps}\int\e\b
\Big[\frac{1}{2\z}(x_{j}+y_{j}s_{j})+\frac{\z}{2}(x_{j}-y_{j}s_{j})\Big]
\,\Pi_{\a+\eps}(ds)\\
&+
\chi_{\{\eps_{i}=1\}}\,x_{j}x^{\eps-e_{i}}y^{\eps}\int\e\b\,\Pi_{\a+\eps}(ds)\\
&+
\chi_{\{\eps_{i}=1\}}\chi_{\{\eps_{j}=1\}}\,(2\a_{j}+2)x^{\eps-e_{i}-e_{j}}y^{\eps}
\int\e\b\,\Pi_{\a+\eps}(ds).
\end{align*}
Using sequently Lemma \ref{comp} (a), the fact that $x_{j}\leq\sqrt{q_{+}}+\sqrt{q_{-}}$ and then Lemma \ref{comp} (b) (taken with $b=1\slash 2$, $A=\z^{-1}$ and $A=\z$, respectively) we get
\begin{align*}
|\partial_{x_{i}}h_{j}(x,y,\z)|\lesssim&
\,(xy)^{\eps}\int\e\b\Big[\frac{1}{\z}\sqrt{q_{+}}+\z\sqrt{q_{-}}\Big]^{2}\,\Pi_{\a+\eps}(ds)\\
&+
(xy)^{\eps}\int\e\b\big(\sqrt{q_{+}}+\sqrt{q_{-}}\big)\Big[\frac{1}{\z}\sqrt{q_{+}}+\z\sqrt{q_{-}}\Big]
\,\Pi_{\a+\eps}(ds)\\
&+
\chi_{\{\eps_{j}=1\}}\,x^{\eps-e_{j}}y^{\eps}\int\e\b\Big[\frac{1}{\z}\sqrt{q_{+}}+\z\sqrt{q_{-}}\Big]
\,\Pi_{\a+\eps}(ds)\\
&+
\chi_{\{\eps_{i}=1\}}\,x^{\eps-e_{i}}y^{\eps}\int\e\b
\Big[\frac{1}{\z}\sqrt{q_{+}}+\z\sqrt{q_{-}}\Big]
\,\Pi_{\a+\eps}(ds)\\
&+
\chi_{\{\eps_{i}=1\}}\,x^{\eps-e_{i}}y^{\eps}\int\e\b\big(\sqrt{q_{+}}+\sqrt{q_{-}}\big)\,\Pi_{\a+\eps}(ds)\\
&+
\chi_{\{\eps_{i}=1\}}\chi_{\{\eps_{j}=1\}}\,x^{\eps-e_{i}-e_{j}}y^{\eps}
\int\e\b\,\Pi_{\a+\eps}(ds)\\
\lesssim&
\,\z^{-1}\,(xy)^{\eps}\int\big(\e\b\big)^{1\slash 4}\,\Pi_{\a+\eps}(ds)\\
&+
\chi_{\{\eps_{j}=1\}}\,\z^{-1\slash 2}\,x^{\eps-e_{j}}y^{\eps}\int\big(\e\b\big)^{1\slash 2}
\,\Pi_{\a+\eps}(ds)\\
&+
\chi_{\{\eps_{i}=1\}}\,\z^{-1\slash 2}\,x^{\eps-e_{i}}y^{\eps}\int\big(\e\b\big)^{1\slash 2}
\,\Pi_{\a+\eps}(ds)\\
&+
\chi_{\{\eps_{i}=1\}}\chi_{\{\eps_{j}=1\}}\,x^{\eps-e_{i}-e_{j}}y^{\eps}
\int\e\b\,\Pi_{\a+\eps}(ds).
\end{align*}
{\bf Case 2:} $\mathbf {i= j.}$ We have
\begin{align*}
\partial&_{x_{j}}h_{j}(x,y,\z)\\
=&
\,(xy)^{\eps}\int\e\b\Big[\frac{1}{2\z}(x_{j}+y_{j}s_{j})+\frac{\z}{2}(x_{j}-y_{j}s_{j})\Big]^{2}
\,\Pi_{\a+\eps}(ds)\\
&-
\,(xy)^{\eps}\int\e\b\Big[\frac{1}{2\z}+\frac{\z}{2}\Big]
\,\Pi_{\a+\eps}(ds)
+
\,\big(1+\chi_{\{\eps_{j}=1\}}\big)(xy)^{\eps}\int\e\b
\,\Pi_{\a+\eps}(ds)\\
&-
\,x_{j}(xy)^{\eps}\int\e\b\Big[\frac{1}{2\z}(x_{j}+y_{j}s_{j})+\frac{\z}{2}(x_{j}-y_{j}s_{j})\Big]
\,\Pi_{\a+\eps}(ds)\\
&-
\chi_{\{\eps_{j}=1\}}\,(2\a_{j}+3)x^{\eps-e_{j}}y^{\eps}
\int\e\b\Big[\frac{1}{2\z}(x_{j}+y_{j}s_{j})+\frac{\z}{2}(x_{j}-y_{j}s_{j})\Big]
\,\Pi_{\a+\eps}(ds).
\end{align*}
Proceeding similarly as in Case 1 (and using the inequality $\z^{-1}\geq 1$) we obtain
\begin{align*}
|\partial_{x_{j}}h_{j}(x,y,\z)|\lesssim&
\,\z^{-1}\,(xy)^{\eps}\int\big(\e\b\big)^{1\slash 4}
\,\Pi_{\a+\eps}(ds)\\
&+
\chi_{\{\eps_{j}=1\}}\,\z^{-1\slash 2}\,x^{\eps-e_{j}}y^{\eps}
\int\big(\e\b\big)^{1\slash 2}\,\Pi_{\a+\eps}(ds).
\end{align*}
Now using the above estimates of $\partial_{x_{i}}h_{j}(x,y,\z)$, the fact that $\t\leq x^{*}$ and Lemma \ref{lemma4.5} twice (with $z=\t$ and $z=x^{*}$) we see that
\begin{align}\label{nier9}
|\partial&_{x_{i}}\delta_{j,x}G_{t}^{\a,\eps}(\t,y)|\\\nonumber
\lesssim&
\sqrt{1-\z^{2}}\,\z^{-d-|\a|-|\eps|-1}\,(x^{*}y)^{\eps}
\int\big(\e\eee\big)^{1\slash 64}\,\Pi_{\a+\eps}(ds)\\\nonumber
&+
\,\chi_{\{\eps_{j}=1\}}\,\sqrt{1-\z^{2}}\,\z^{-d-|\a|-|\eps|-1\slash 2}
(x^{*})^{\eps-e_{j}}y^{\eps}\int\big(\e\eee\big)^{1\slash 32}\,\Pi_{\a+\eps}(ds)\\\nonumber
&+
\,\chi_{\{\eps_{i}=1\}}\,\sqrt{1-\z^{2}}\,\z^{-d-|\a|-|\eps|-1\slash 2}
(x^{*})^{\eps-e_{i}}y^{\eps}\int\big(\e\eee\big)^{1\slash 32}\,\Pi_{\a+\eps}(ds)\\\nonumber
&+
\chi_{\{i\ne j\}}\chi_{\{\eps_{i}=1\}}\chi_{\{\eps_{j}=1\}}
\,\sqrt{1-\z^{2}}\,\z^{-d-|\a|-|\eps|}(x^{*})^{\eps-e_{i}-e_{j}}y^{\eps}
\int\big(\e\eee\big)^{1\slash 16}\,\Pi_{\a+\eps}(ds),
\end{align}
provided that $|x-y|>2|x-x'|$. From here \eqref{nier4} follows with the aid of Lemma \ref{lemat} (a) (specified to either $u=2$, $\rho=0$ and $\xi=0$ or $\xi=e_{j}$, $\xi=e_{i}$, or $\xi=e_{i}+e_{j}$) and Lemma \ref{F2} (taken with $z=x^{*}$).

The proof will be finished once we show that
\begin{align*}
\big\|\delta_{j,x}G_{t}^{\a,\eps}(x,y)-\delta_{j,x}G_{t}^{\a,\eps}(x,y')\big\|_{L^{2}(dt)}
&\lesssim
\frac{|y-y'|}{|x-y|} \;
\frac{1}{w^{+}_{\a}(B(x,|y-x|))},\qquad |x-y|>2|y-y'|.
\end{align*}
By the mean value theorem it is enough to verify that for any $i,j=1,\ldots,d,$ we have
\begin{align*}
\|\partial_{y_{i}}\delta_{j,x}G_{t}^{\a,\eps}(x,\t)\|_{L^{2}(dt)}
\lesssim
\frac{1}{|x-y|} \;
\frac{1}{w^{+}_{\a}(B(x,|y-x|))},\qquad |x-y|>2|y-y'|,
\end{align*} 
where $\t$ is a convex combination of $y$ and $y'$. When considering $\partial_{y_{i}}h_{j}(x,y,\z)$ again it is natural to distinguish two cases.
\newline
{\bf Case 1:} $\mathbf {i\ne j.}$
A simple computation gives
\begin{align*}
\partial&_{y_{i}}h_{j}(x,y,\z)\\
=&
\,(xy)^{\eps}\int\e\b\Big[\frac{1}{2\z}(x_{j}+y_{j}s_{j})+\frac{\z}{2}(x_{j}-y_{j}s_{j})\Big]
\Big[\frac{1}{2\z}(y_{i}+x_{i}s_{i})+\frac{\z}{2}(y_{i}-x_{i}s_{i})\Big]
\,\Pi_{\a+\eps}(ds)\\
&-
x_{j}(xy)^{\eps}\int\e\b\Big[\frac{1}{2\z}(y_{i}+x_{i}s_{i})+\frac{\z}{2}(y_{i}-x_{i}s_{i})\Big]
\,\Pi_{\a+\eps}(ds)\\
&-
\chi_{\{\eps_{j}=1\}}\,(2\a_{j}+2)x^{\eps-e_{j}}y^{\eps}
\int\e\b\Big[\frac{1}{2\z}(y_{i}+x_{i}s_{i})+\frac{\z}{2}(y_{i}-x_{i}s_{i})\Big]
\,\Pi_{\a+\eps}(ds)\\
&-
\chi_{\{\eps_{i}=1\}}\,x^{\eps}y^{\eps-e_{i}}\int\e\b
\Big[\frac{1}{2\z}(x_{j}+y_{j}s_{j})+\frac{\z}{2}(x_{j}-y_{j}s_{j})\Big]
\,\Pi_{\a+\eps}(ds)\\
&+
\chi_{\{\eps_{i}=1\}}\,x_{j}x^{\eps}y^{\eps-e_{i}}\int\e\b
\,\Pi_{\a+\eps}(ds)\\
&+
\chi_{\{\eps_{i}=1\}}\chi_{\{\eps_{j}=1\}}\,(2\a_{j}+2)x^{\eps-e_{j}}y^{\eps-e_{i}}
\int\e\b
\,\Pi_{\a+\eps}(ds).
\end{align*}
Proceeding as before (see the estimate of $\partial_{x_{i}}h_{j}(x,y,\z)$ above) we obtain
\begin{align*}
|\partial_{y_{i}}h_{j}(x,y,\z)|\lesssim&
\,\z^{-1}\,(xy)^{\eps}\int\big(\e\b\big)^{1\slash 4}
\,\Pi_{\a+\eps}(ds)\\
&+
\chi_{\{\eps_{j}=1\}}\,\z^{-1\slash 2}\,x^{\eps-e_{j}}y^{\eps}\int\big(\e\b\big)^{1\slash 2}
\,\Pi_{\a+\eps}(ds)\\
&+
\chi_{\{\eps_{i}=1\}}\,\z^{-1\slash 2}\,x^{\eps}y^{\eps-e_{i}}\int\big(\e\b\big)^{1\slash 2}
\,\Pi_{\a+\eps}(ds)\\
&+
\chi_{\{\eps_{i}=1\}}\chi_{\{\eps_{j}=1\}}\,x^{\eps-e_{j}}y^{\eps-e_{i}}
\int\e\b\,\Pi_{\a+\eps}(ds).
\end{align*}
{\bf Case 2:} $\mathbf {i= j.}$
It is not hard to check that
\begin{align*}
\partial&_{y_{j}}h_{j}(x,y,\z)\\
=&
\,(xy)^{\eps}\int\e\b\Big[\frac{1}{2\z}(x_{j}+y_{j}s_{j})+\frac{\z}{2}(x_{j}-y_{j}s_{j})\Big]
\Big[\frac{1}{2\z}(y_{j}+x_{j}s_{j})+\frac{\z}{2}(y_{j}-x_{j}s_{j})\Big]
\,\Pi_{\a+\eps}(ds)\\
&+
(xy)^{\eps}\int\e\b\Big[-\frac{1}{2\z}s_{j}+\frac{\z}{2}s_{j}\Big]
\,\Pi_{\a+\eps}(ds)\\
&-
x_{j}(xy)^{\eps}\int\e\b\Big[\frac{1}{2\z}(y_{j}+x_{j}s_{j})+\frac{\z}{2}(y_{j}-x_{j}s_{j})\Big]
\,\Pi_{\a+\eps}(ds)\\
&-
\chi_{\{\eps_{j}=1\}}\,x^{\eps}y^{\eps-e_{j}}\int\e\b
\Big[\frac{1}{2\z}(x_{j}+y_{j}s_{j})+\frac{\z}{2}(x_{j}-y_{j}s_{j})\Big]
\,\Pi_{\a+\eps}(ds)\\
&+
\chi_{\{\eps_{j}=1\}}\,x_{j}x^{\eps}y^{\eps-e_{j}}\int\e\b
\,\Pi_{\a+\eps}(ds)\\
&+
\chi_{\{\eps_{j}=1\}}\,(2\a_{j}+2)(xy)^{\eps-e_{j}}\int\e\b
\,\Pi_{\a+\eps}(ds)\\
&-
\chi_{\{\eps_{j}=1\}}\,(2\a_{j}+2)x^{\eps-e_{j}}y^{\eps}\int\e\b
\Big[\frac{1}{2\z}(y_{j}+x_{j}s_{j})+\frac{\z}{2}(y_{j}-x_{j}s_{j})\Big]
\,\Pi_{\a+\eps}(ds)
\end{align*}
and therefore (see Case 2 in the estimate of $\partial_{x_{i}}h_{j}(x,y,\z)$ above)
\begin{align*}
|\partial_{y_{j}}h_{j}(x,y,\z)|\lesssim&
\,\z^{-1}\,(xy)^{\eps}\int\big(\e\b\big)^{1\slash 4}
\,\Pi_{\a+\eps}(ds)\\
&+
\chi_{\{\eps_{j}=1\}}\,\z^{-1\slash 2}\,x^{\eps}y^{\eps-e_{j}}
\int\big(\e\b\big)^{1\slash 2}\,\Pi_{\a+\eps}(ds)\\
&+
\chi_{\{\eps_{j}=1\}}\,(xy)^{\eps-e_{j}}\int\e\b\,\Pi_{\a+\eps}(ds)\\
&+
\chi_{\{\eps_{j}=1\}}\,\z^{-1\slash 2}\,x^{\eps-e_{j}}y^{\eps}
\int\big(\e\b\big)^{1\slash 2}\,\Pi_{\a+\eps}(ds).
\end{align*}
Using the above estimates of $\partial_{y_{i}}h_{j}(x,y,\z)$, the fact that $\t\leq y^{*}$ and Lemma \ref{lemma4.5} twice we get
\begin{align}\label{nier13}
|\partial&_{y_{i}}\delta_{j,x}G_{t}^{\a,\eps}(x,\t)|\\\nonumber
\lesssim&
\sqrt{1-\z^{2}}\,\z^{-d-|\a|-|\eps|-1}\,(xy^{*})^{\eps}
\int\big(\e\fff\big)^{1\slash 64}\,\Pi_{\a+\eps}(ds)\\\nonumber
&+
\,\chi_{\{\eps_{j}=1\}}\,\sqrt{1-\z^{2}}\,\z^{-d-|\a|-|\eps|-1\slash 2}
\,x^{\eps-e_{j}}(y^{*})^{\eps}\int\big(\e\fff\big)^{1\slash 32}\,\Pi_{\a+\eps}(ds)\\\nonumber
&+
\,\chi_{\{\eps_{i}=1\}}\,\sqrt{1-\z^{2}}\,\z^{-d-|\a|-|\eps|-1\slash 2}
\,x^{\eps}(y^{*})^{\eps-e_{i}}\int\big(\e\fff\big)^{1\slash 32}\,\Pi_{\a+\eps}(ds)\\\nonumber
&+
\chi_{\{\eps_{i}=1\}}\chi_{\{\eps_{j}=1\}}
\,\sqrt{1-\z^{2}}\,\z^{-d-|\a|-|\eps|}\,x^{\eps-e_{j}}(y^{*})^{\eps-e_{i}}
\int\big(\e\fff\big)^{1\slash 16}\,\Pi_{\a+\eps}(ds),
\end{align}
provided that $|x-y|>2|y-y'|$. Now Lemma \ref{lemat} (a) (applied with $u=2$ and: $\xi=\rho=0$ or $\xi=e_{j}$, $\rho=0$, or $\xi=0$, $\rho=e_{i}$, or $\xi=e_{j}$, $\rho=e_{i}$) together with Lemma \ref{F2} gives the desired bound.

The proof of the case of $g_{H}^{j,\eps,+}$ in Theorem \ref{kes} is complete.
\end {proof}

\begin {proof}[Proof of Theorem \ref{kes}; the case of $g_{H,*}^{j,\eps,+}$.]
We first show the growth condition. Since $\delta_{j,x}^{*}=-\delta_{j,x}+2x_{j}$, in view of Theorem \ref{kes} (the case of $g_{H}^{j,\eps,+}$) it suffices to show that
$$
\|x_{j}G_{t}^{\a,\eps}(x,y)\|_{L^{2}(dt)}
\lesssim
\frac{1}{w^{+}_{\a}(B(x,|y-x|))},\qquad x\ne y,\qquad j=1,\ldots,d.\\
$$
Taking into account \eqref{hk}, the fact that $x_{j}\leq \sqrt{q_{+}}+\sqrt{q_{-}}$ and Lemma \ref{comp} (b) (specified to $b=1\slash 2$, $A=\z^{-1}$ and $A=\z$) we get
\begin{align}\nonumber
x_{j}G_{t}^{\a,\eps}(x,y)
\lesssim&
\sqrt{1-\z^2}\,\z^{-d-|\a|-|\eps|}\,(xy)^{\eps}\int
\e\b(\sqrt{q_{+}}+\sqrt{q_{-}})\,\Pi_{\a+\eps}(ds)\\\label{nier14}
\lesssim&
\sqrt{1-\z^2}\,\z^{-d-|\a|-|\eps|-1\slash 2}\,(xy)^{\eps}
\int\big(\e\b\big)^{1\slash 2}\,\Pi_{\a+\eps}(ds).
\end{align}
Now an application of Lemma \ref{lemat} (a) (taken with $u=1$ and $\xi=\rho=0$) leads to the required bound.

To prove the smoothness estimates, again in view of the relation $\delta_{j,x}^{*}=-\delta_{j,x}+2x_{j}$ and Theorem \ref{kes} (the case of $g_{H}^{j,\eps,+}$) it suffices to verify that
\begin{align*}
\|x_{j}G_{t}^{\a,\eps}(x,y)-x_{j}^{'}G_{t}^{\a,\eps}(x',y)\|
&\lesssim
\frac{|x-x'|}{|x-y|} \;
\frac{1}{w^{+}_{\a}(B(x,|y-x|))},\qquad |x-y|>2|x-x'|,\\
\|x_{j}G_{t}^{\a,\eps}(x,y)-x_{j}G_{t}^{\a,\eps}(x,y')\|
&\lesssim
\frac{|y-y'|}{|x-y|} \;
\frac{1}{w^{+}_{\a}(B(x,|y-x|))},\qquad |x-y|>2|y-y'|.
\end{align*}
Using the mean value theorem we obtain
\begin{align*}
\big|x_{j}G_{t}^{\a,\eps}(x,y)-x_{j}^{'}G_{t}^{\a,\eps}(x',y)\big|
\leq&
|x-x'|
\big|\nabla_{\!x}\big(x_{j}G_{t}^{\a,\eps}(x,y)\big)\big|_{x=\t}\big|,\\
\big|x_{j}G_{t}^{\a,\eps}(x,y)-x_{j}G_{t}^{\a,\eps}(x,y')\big|
\leq&
|y-y'|
\big|\nabla_{\!y}\big(x_{j}G_{t}^{\a,\eps}(x,y)\big)\big|_{y=\psi}\big|,
\end{align*}
where $\t$, $\psi$ are convex combinations of $x$, $x'$, and $y$, $y'$, respectively, that depend also on $t$. Thus it suffices to show that for any $i,j=1,\ldots,d,$ 
\begin{align*}
\big\|\partial_{x_{i}}\big(x_{j}G_{t}^{\a,\eps}(x,y)\big)\big|_{x=\t}\big\|_{L^{2}(dt)}
&\lesssim
\frac{1}{|x-y|} \;
\frac{1}{w^{+}_{\a}(B(x,|y-x|))},\qquad |x-y|>2|x-x'|,\\
\big\|\partial_{y_{i}}\big(x_{j}G_{t}^{\a,\eps}(x,y)\big)\big|_{y=\psi}\big\|_{L^{2}(dt)}
&\lesssim
\frac{1}{|x-y|} \;
\frac{1}{w^{+}_{\a}(B(x,|y-x|))},\qquad |x-y|>2|y-y'|.
\end{align*}
An elementary computation gives
\begin{align*}
\partial&_{x_{i}}\big(x_{j}G_{t}^{\a,\eps}(x,y)\big)\\
=&
-\frac{1}{2^d}\Big(\frac{1-\z^2}{2\z}\Big)^{d+|\a|+|\eps|}x_{j}(xy)^{\eps}\int\e\b
\Big[\frac{1}{2\z}(x_{i}+y_{i}s_{i})+\frac{\z}{2}(x_{i}-y_{i}s_{i})\Big]
\,\Pi_{\a+\eps}(ds)\\
&+
\big(\chi_{\{\eps_{i}=1\}}+\chi_{\{i=j\}}\big)\,\frac{1}{2^d}\Big(\frac{1-\z^2}{2\z}\Big)^{d+|\a|+|\eps|}
x_{j}\,x^{\eps-e_{i}}y^{\eps}\int\e\b\,\Pi_{\a+\eps}(ds),\\
\partial&_{y_{i}}\big(x_{j}G_{t}^{\a,\eps}(x,y)\big)\\
=&
-\frac{1}{2^d}\Big(\frac{1-\z^2}{2\z}\Big)^{d+|\a|+|\eps|}x_{j}(xy)^{\eps}\int\e\b
\Big[\frac{1}{2\z}(y_{i}+x_{i}s_{i})+\frac{\z}{2}(y_{i}-x_{i}s_{i})\Big]
\,\Pi_{\a+\eps}(ds)\\
&+
\,\chi_{\{\eps_{i}=1\}}\,\frac{1}{2^d}\Big(\frac{1-\z^2}{2\z}\Big)^{d+|\a|+|\eps|}
x_{j}x^{\eps}y^{\eps-e_{i}}\int\e\b\,\Pi_{\a+\eps}(ds).
\end{align*}
Applying the inequality $x_{j}\leq \sqrt{q_{+}}+\sqrt{q_{-}}$ and Lemma \ref{comp} (a), (b) (with $b=1\slash 2$) we get
\begin{align}\label{nier16}
\big|\partial_{x_{i}}\big(x_{j}G_{t}^{\a,\eps}(x,y)\big)\big|
\lesssim&
\sqrt{1-\z^2}\,\z^{-d-|\a|-|\eps|-1}(xy)^{\eps}\int\big(\e\b\big)^{1\slash 4}
\,\Pi_{\a+\eps}(ds)\\\nonumber
&+
\,\chi_{\{\eps_{i}=1\}}\,\sqrt{1-\z^2}\,\z^{-d-|\a|-|\eps|-1\slash 2}x^{\eps-e_{i}}y^{\eps}
\int\big(\e\b\big)^{1\slash 2}\,\Pi_{\a+\eps}(ds),\\\label{nier17}
\big|\partial_{y_{i}}\big(x_{j}G_{t}^{\a,\eps}(x,y)\big)\big|
\lesssim&
\sqrt{1-\z^2}\,\z^{-d-|\a|-|\eps|-1}(xy)^{\eps}\int\big(\e\b\big)^{1\slash 4}
\,\Pi_{\a+\eps}(ds)\\\nonumber
&+
\,\chi_{\{\eps_{i}=1\}}\,\sqrt{1-\z^2}\,\z^{-d-|\a|-|\eps|-1\slash 2}x^{\eps}y^{\eps-e_{i}}
\int\big(\e\b\big)^{1\slash 2}\,\Pi_{\a+\eps}(ds).
\end{align}
Now using the fact that $\t\leq x^{*}$, $\psi\leq y^{*}$, and Lemma \ref{lemma4.5}, we obtain the estimates
\begin{align*}
\big|\partial&_{x_{i}}\big(x_{j}G_{t}^{\a,\eps}(x,y)\big)\big|_{x=\t}\big|\\
\lesssim&
\sqrt{1-\z^2}\,\z^{-d-|\a|-|\eps|-1}(x^{*} y)^{\eps}\int\big(\e\eee\big)^{1\slash 64}
\,\Pi_{\a+\eps}(ds)\\
&+
\,\chi_{\{\eps_{i}=1\}}\,\sqrt{1-\z^2}\,\z^{-d-|\a|-|\eps|-1\slash 2}(x^{*})^{\eps-e_{i}}y^{\eps}
\int\big(\e\eee\big)^{1\slash 32}\,\Pi_{\a+\eps}(ds),\\
\big|\partial&_{y_{i}}\big(x_{j}G_{t}^{\a,\eps}(x,y)\big)\big|_{y=\psi}\big|\\
\lesssim&
\sqrt{1-\z^2}\,\z^{-d-|\a|-|\eps|-1}(xy^{*})^{\eps}\int\big(\e\fff\big)^{1\slash 64}
\,\Pi_{\a+\eps}(ds)\\
&+
\,\chi_{\{\eps_{i}=1\}}\,\sqrt{1-\z^2}\,\z^{-d-|\a|-|\eps|-1\slash 2}x^{\eps}(y^{*})^{\eps-e_{i}}
\int\big(\e\fff\big)^{1\slash 32}\,\Pi_{\a+\eps}(ds),
\end{align*}
provided that $|x-y|>2|x-x'|$ and $|x-y|>2|y-y'|$, respectively.
Finally, combining Lemma \ref{lemat} (a) with Lemma \ref{F2} gives the smoothness conditions.
\end {proof}

\subsection{Lusin's area integrals based on  $\mathbf{\{T_{t}^{\a,\eps,+}\}}$}\label{k4}
$\newline$
In this subsection we show the standard estimates for the kernels 
\begin{align*}
K_{z,t}^{\a,\eps,V}(x,y),\quad 
K_{z,t}^{\a,\eps,H,j}(x,y),\quad
K_{z,t}^{\a,\eps,H,*,j}(x,y),\qquad j=1,\ldots,d,
\end{align*}
valued in the Banach spaces $L^{2}(A,tdtdz)$ (the case of $K_{z,t}^{\a,\eps,V}(x,y)$) or $L^{2}(A,dtdz)$ (the remaining cases), where $A=\{(z,t)\in \RR\times(0,\infty) : |z|<\sqrt{t}\}$.
To achieve this we shall need several additional technical lemmas.

\begin{lem}\label{qz}
Let $x,y\in\R$, $z\in\RR$, $s\in[-1,1]^{d}$. Then
\begin{align*}
q_{\pm}(x+z,y,s)\geq\frac{1}{2}q_{\pm}(x,y,s)-|z|^{2}.
\end{align*}
\end{lem}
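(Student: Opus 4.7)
The plan is to expand $q_{\pm}(x+z,y,s)$ directly and then reduce to a completion of the square. Writing $y_s=(y_1 s_1,\ldots,y_d s_d)\in\RR$ and using the bilinear structure, one obtains
\begin{align*}
q_{\pm}(x+z,y,s)=q_{\pm}(x,y,s)+|z|^{2}+2\langle z,x\pm y_s\rangle.
\end{align*}
Consequently, the inequality claimed in the lemma is equivalent to
\begin{align*}
\tfrac{1}{2}q_{\pm}(x,y,s)+2|z|^{2}+2\langle z,x\pm y_s\rangle\ge 0.
\end{align*}

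Next I would exploit the fact that $|s_i|\le 1$ to bound $q_{\pm}(x,y,s)$ from below by the square of a single vector. Since $|y|^{2}\ge\sum_i y_i^{2}s_i^{2}$, we have
\begin{align*}
q_{\pm}(x,y,s)=|x|^{2}+|y|^{2}\pm 2\sum_{i=1}^{d}x_i y_i s_i\ge |x|^{2}+\sum_{i=1}^{d}y_i^{2}s_i^{2}\pm 2\sum_{i=1}^{d}x_i y_i s_i=|x\pm y_s|^{2}.
\end{align*}
Plugging this into the previous display reduces the task to verifying
\begin{align*}
\tfrac{1}{2}|x\pm y_s|^{2}+2\langle z,x\pm y_s\rangle+2|z|^{2}\ge 0,
\end{align*}
which is precisely $\tfrac{1}{2}|x\pm y_s+2z|^{2}\ge 0$ and hence trivially true.

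I do not expect any real obstacle here; the whole statement is an elementary identity hidden behind a square completion, and both sides of the inequality are translation-invariant enough that the argument above works for the two signs simultaneously. An alternative, equally short route would be to use the orthogonal decomposition $q_{\pm}(x,y,s)=\sum_i(x_i\pm y_i s_i)^{2}+\sum_i y_i^{2}(1-s_i^{2})$, apply the scalar estimate $(a+b)^{2}\ge\tfrac{1}{2}a^{2}-b^{2}$ (which is just $\tfrac{1}{2}(a+2b)^{2}\ge 0$) coordinatewise to the first sum, and then add back the nonnegative remainder $\sum_i y_i^{2}(1-s_i^{2})$. Either formulation produces the constant $1/2$ on the main term and $-|z|^{2}$ as the remainder, matching the statement exactly.
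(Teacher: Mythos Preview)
Your proof is correct and is essentially the same argument as the paper's: both reduce to the identity
\[
q_{\pm}(x+z,y,s)-\tfrac{1}{2}q_{\pm}(x,y,s)+|z|^{2}=\tfrac{1}{2}|x\pm y_s+2z|^{2}+\tfrac{1}{2}\sum_{i}(1-s_i^{2})y_i^{2}\ge 0,
\]
the only cosmetic difference being that the paper first reduces to $d=1$ (using $q_-(x,y,s)=q_+(x,y,-s)$ and the product structure) before writing down this completion of the square, whereas you work directly in $\RR$. Your ``alternative route'' via the decomposition $q_{\pm}=\sum_i(x_i\pm y_i s_i)^{2}+\sum_i y_i^{2}(1-s_i^{2})$ is in fact exactly the paper's computation.
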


\begin {proof}
Since $q_{-}(x,y,s)=q_{+}(x,y,-s)$ we may consider $q_{+}$ only. Moreover, by the structure of $q_{+}$ we may restrict to the one-dimensional case. Then a simple computation shows that
\begin{align*}
q_{+}(x+z,y,s)-\frac{1}{2}q_{+}(x,y,s)+z^{2}=&
\,\frac{1}{2}(x+ys+2z)^{2}+\frac{1}{2}(1-s^{2})y^{2}.
\end{align*}
Since $|s|\leq 1$, the conclusion follows.
\end {proof}

\begin{lem}\label{fakt}
Assume that $\a \in [-1\slash 2, \infty)^d$. Let $x,x'\in\R$ and $z\in\RR$ be such that $x+z\in\R$ and let $\v_{\a}$ be the function given by \eqref{deffi}.
If $\t=\t(x,x',z,\z(t))$ is a convex combination of $x,x'$, then 
\begin{align*}
\int_{|z|<\sqrt{\lo(\z)\slash 2}}\big|\nabla_{\!x}\v_{\a}(x,z,t(\z))\big|_{x=\t}\big|\,\chi_{\{x+z\in\R\}}\chi_{\{x'+z\in\R\}}\,dz
\lesssim
\z^{-1\slash 2}
\end{align*}
uniformly in $x,x',\z$, where $\z$ is related to $t$ as in \eqref{zz}.
\end{lem}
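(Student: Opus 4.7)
The plan is to reduce the $d$-dimensional integral to a product of one-dimensional integrals. First I note that $\sqrt{\lo(\z)\slash 2}=\sqrt{t}$ by the definition \eqref{zz}, so the integration region is exactly the parabolic-cone section $\{|z|<\sqrt{t}\}$, which I would enlarge to the cube $\prod_{j=1}^{d}\{|z_{j}|<\sqrt{t}\}$ in order to separate variables in $z$. Differentiating $\v_{\a}$ from \eqref{deffi} yields a sum of products in which the $i$-th factor is replaced by either $(2\a_{i}+1)(x_{i}+z_{i})^{2\a_{i}}\slash V^{\a_{i},+}_{\sqrt{t}}(x_{i})$ or $-(x_{i}+z_{i})^{2\a_{i}+1}\,\partial_{x_{i}}V^{\a_{i},+}_{\sqrt{t}}(x_{i})\slash\bigl(V^{\a_{i},+}_{\sqrt{t}}(x_{i})\bigr)^{2}$. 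The explicit piecewise formula for $V^{\a_{i},+}_{\sqrt{t}}$ gives the pointwise bound $|\partial_{x_{i}}V^{\a_{i},+}_{\sqrt{t}}(x_{i})|\leq (x_{i}+\sqrt{t})^{2\a_{i}+1}$, and this is in turn $\lesssim V^{\a_{i},+}_{\sqrt{t}}(x_{i})\slash\sqrt{t}$ by \eqref{esv}. Since $\t$ is a convex combination of $x,x'$, the factors $\chi_{\{x+z\in\R\}}\chi_{\{x'+z\in\R\}}$ force $\t_{j}+z_{j}>0$ on the support of the integrand, so everything is well-defined after specializing $x=\t$.

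After these reductions, the main building block is the one-dimensional estimate
\begin{align*}
\int_{-\sqrt{t}}^{\sqrt{t}}\frac{(\t_{j}+z_{j})^{2\a_{j}+1}}{V^{\a_{j},+}_{\sqrt{t}}(\t_{j})}\,\chi_{\{\t_{j}+z_{j}>0\}}\,dz_{j}\lesssim 1,
\end{align*}
which is immediate from \eqref{esv} together with $\t_{j}+z_{j}\leq\t_{j}+\sqrt{t}$. Multiplying this bound over $j\ne i$, the chain-rule contribution picks up an extra factor $1\slash\sqrt{t}$ from the estimate on $\partial_{x_{i}}V^{\a_{i},+}_{\sqrt{t}}$, giving $\lesssim 1\slash\sqrt{t}$ overall. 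The remaining contribution reduces to controlling
\begin{align*}
(2\a_{i}+1)\int_{-\sqrt{t}}^{\sqrt{t}}\frac{(\t_{i}+z_{i})^{2\a_{i}}}{V^{\a_{i},+}_{\sqrt{t}}(\t_{i})}\,\chi_{\{\t_{i}+z_{i}>0\}}\,dz_{i},
\end{align*}
which after the substitution $u=\t_{i}+z_{i}$ and one more use of \eqref{esv} is again $\lesssim 1\slash\sqrt{t}$. Summing the two contributions and invoking the asymptotics \eqref{cal} to obtain $1\slash\sqrt{t(\z)}\lesssim \z^{-1\slash 2}$ uniformly in $\z\in(0,1)$ finishes the proof.

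The delicate point, and the only place where any real care is needed, is the boundary case $\a_{i}=-1\slash 2$: the substitution in the "$(2\a_{i}+1)$-term" integral would produce a logarithmically divergent $\int du\slash u$ whenever the lower limit $\max\{0,\t_{i}-\sqrt{t}\}$ equals $0$, but the prefactor $(2\a_{i}+1)$ vanishes exactly there, so the whole term disappears before it can cause trouble. Equivalently, when $\a_{i}=-1\slash 2$ the $i$-th factor of $\v_{\a}$ does not depend on $x_{i}$ at all (since $(x_{i}+z_{i})^{0}\equiv 1$) and only the chain-rule term survives. This algebraic cancellation is what makes the bound hold uniformly over the full parameter range $\a\in[-1\slash 2,\infty)^{d}$.
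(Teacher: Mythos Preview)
Your overall strategy matches the paper's: differentiate the product, separate the $i$-th factor into the two quotient-rule pieces, and bound the remaining factors using \eqref{esv}. The pointwise bound you use for the ``building block'' factors with exponent $2\a_j+1$ is fine even though $\t$ varies with $z$, because you obtain a uniform pointwise estimate $\lesssim t^{-1/2}$ on the integrand before integrating. The same applies to your treatment of the chain-rule piece.

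There is, however, a genuine gap in your handling of the term
\[
(2\a_i+1)\int_{-\sqrt{t}}^{\sqrt{t}}\frac{(\t_i+z_i)^{2\a_i}}{V^{\a_i,+}_{\sqrt t}(\t_i)}\,\chi_{\{\t_i+z_i>0\}}\,dz_i
\]
when $\a_i\in(-1/2,0)$. The lemma explicitly allows $\t=\t(x,x',z,\z(t))$ to depend on $z$, and in the applications it does (it comes from the mean value theorem applied to a function of $x$ that involves $z$). Consequently the substitution $u=\t_i+z_i$ is not legitimate: $du/dz_i$ need not equal $1$. For $\a_i\ge 0$ this is harmless, since $(\t_i+z_i)^{2\a_i}\le(\t_i+\sqrt t)^{2\a_i}$ gives a uniform pointwise bound and you can integrate without any substitution. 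But for $\a_i\in(-1/2,0)$ the exponent $2\a_i$ is negative, the integrand blows up as $\t_i+z_i\to 0^+$, and no uniform pointwise estimate is available; one really has to integrate, and for that $\t_i$ must be decoupled from $z$.

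The paper repairs this with a short monotonicity argument: since $s\mapsto (s+z_i)/(s+\sqrt t)$ is increasing for $s\ge 0$ and $2\a_i<0$, one has
\[
\frac{(\t_i+z_i)^{2\a_i}}{(\t_i+\sqrt t)^{2\a_i+1}}
\le
\frac{((x_*)_i+z_i)^{2\a_i}}{((x_*)_i+\sqrt t)^{2\a_i+1}},
\]
where $x_*=\min\{x,x'\}$ is independent of $z$; on the support of $\chi_{\{x+z\in\R\}}\chi_{\{x'+z\in\R\}}$ one also has $(x_*)_i+z_i>0$. After this replacement the substitution and the rest of your computation go through. So your argument is correct once you insert this step (and split into the cases $\a_i=-1/2$, $\a_i\in(-1/2,0)$, $\a_i\ge 0$, as the paper does).
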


\begin {proof}
It suffices to show that for every $j=1,\ldots,d$, we have
\begin{align*}
\int_{|z|<\sqrt{\lo(\z)\slash 2}}
\big|\partial_{x_{j}}\v_{\a}(x,z,t(\z))\big|_{x=\t}\big|\,\chi_{\{x+z\in\R\}}\chi_{\{x'+z\in\R\}}\,dz
\lesssim
\big(\lo(\z)\big)^{-1\slash 2},
\end{align*}
since $\z\lesssim \lo(\z)$.
An elementary computation gives
\begin{align*}
\partial&_{x_{j}}\v_{\a}(x,z,t)\big|_{x=\t}\\
&=
\frac{(2\a_{j}+1)(\t_{j}+z_{j})^{2\a_{j}}V_{\sqrt{t}}^{\a_{j},+}(\t_{j})-
(\t_{j}+z_{j})^{2\a_{j}+1}\,\partial_{x_{j}}V_{\sqrt{t}}^{\a_{j},+}(x_{j})\big|_{x_j=\t_j}}
{\big(V_{\sqrt{t}}^{\a_{j},+}(\t_{j})\big)^{2}}
\,\prod_{i\ne j}\frac{(\t_{i}+z_{i})^{2\a_{i}+1}}{V_{\sqrt{t}}^{\a_{i},+}(\t_{i})}.
\end{align*}
We estimate this derivative on the set of integration by using the inequality $|z_{j}|\leq \sqrt{\lo(\z)\slash 2}$ and the estimates
\begin{align}\label{os}
V_{\sqrt{t}}^{\a_{j},+}(x_{j})
\simeq 
\sqrt{\lo(\z)}\big(x_{j}+\sqrt{\lo(\z)\slash 2}\big)^{2\a_{j}+1},\qquad
\big|\partial_{x_{j}}V_{\sqrt{t}}^{\a_{j},+}(x_{j})\big|
\leq\big(x_{j}+\sqrt{\lo(\z)\slash 2}\big)^{2\a_{j}+1},
\end{align}
obtaining
\begin{align*}
\big|\partial_{x_{j}}\v_{\a}(x,z,t(\z))\big|_{x=\t}\big|
&\lesssim
\big(\lo(\z)\big)^{-d\slash 2}
\bigg[(2\a_{j}+1)\frac{(\t_{j}+z_{j})^{2\a_{j}}}{\big(\t_{j}+\sqrt{\lo(\z)\slash 2}\big)^{2\a_{j}+1}}
+\big(\lo(\z)\big)^{-1\slash 2}
\bigg]\\
&\equiv I_{1}+I_{2}.
\end{align*}
Now it is not hard to see that the required bound holds for the integral involving $I_{2}$. To estimate the integral related to $I_{1}$, we consider three cases. The case when $\a_{j}=-1\slash 2$ is trivial. When $\a_{j}\in(-1\slash 2,0)$ we observe that the function $s\mapsto \frac{s+z_{j}}{s+\sqrt{\lo(\z)\slash 2}}$ is increasing for $s\geq 0$ and therefore
\begin{align*}
\frac{(\t_{j}+z_{j})^{2\a_{j}}}{\big(\t_{j}+\sqrt{\lo(\z)\slash 2}\big)^{2\a_{j}+1}}
\leq
\frac{((x_{*})_{j}+z_{j})^{2\a_{j}}}{\big((x_{*})_{j}+\sqrt{\lo(\z)\slash 2}\big)^{2\a_{j}+1}},
\end{align*}
where $x_{*}=\min\{x,x'\}$. Using this inequality and observing that $z_{j}>-(x_{*})_j$ if $x_j+z_j>0$ and $x'_{j}+z_j>0$, we obtain
\begin{align*}
\int&_{|z|<\sqrt{\lo(\z)\slash 2}}
\,\,I_{1}\,\chi_{\{x+z\in\R\}}\chi_{\{x'+z\in\R\}}\,dz\\
\lesssim&
\big(\lo(\z)\big)^{-1\slash 2}\int_{-(x_{*})_{j}}^{\sqrt{\lo(\z)\slash 2}}
\frac{((x_{*})_{j}+z_{j})^{2\a_{j}}}{\big((x_{*})_{j}+\sqrt{\lo(\z)\slash 2}\big)^{2\a_{j}+1}}\,dz_{j}
\lesssim
\big(\lo(\z)\big)^{-1\slash 2}.
\end{align*}
Finally, if $\a_{j}\geq 0$ then $(\t_{j}+z_{j})^{2\a_{j}}\leq (\t_{j}+\sqrt{\lo(\z)\slash 2})^{2\a_{j}}$ and
\begin{align*}
I_{1}\lesssim
\big(\lo(\z)\big)^{-(d+1)\slash 2},
\end{align*}
so the conclusion again follows. 
\end {proof}

\begin{lem}\label{lem1}
Assume that $\a\in[-1\slash 2, \infty)^d$ and $\xi, \rho, \eta,\eps\in\Z$ are fixed and such that $\xi+\eta\leq\eps$ and $\rho\leq\eps$. Given $C>0$ and $u\in\mathbb{R}$, consider the function acting on $\R\times\R\times (0,1)$ and defined by
\begin{align*}
p_{u}(x,y,\z)=&
\sqrt{1-\z^2}\,\z^
{-d-|\a|-|\eps|+|\xi|\slash 2+|\rho|\slash 2-u\slash 2}\,
\big(\lo(\z)\big)^{|\eta|\slash 2}x^{\eps-\xi-\eta}y^{\eps-\rho}\exp\bigg(\frac{\lo(\z)}{8\z}\bigg)\\
&\times
\int_{[-1,1]^{d}}\big(\e\b\big)^{C}\,\Pi_{\a+\eps}(ds).
\end{align*}
\begin{itemize}
\item[(a)] If $u\geq 1$, then we have the estimate
\begin{align*}
\|p_{u}(x,y,\z(t))\|_{L^2(dt)}
&\lesssim
\frac{1}{|x-y|^{u-1}} \;
\frac{1}{w^{+}_{\alpha}(B(x,|y-x|))},\qquad x\ne y,
\end{align*}
where $t$ and $\z$ are related as in $\eqref{zz}$.
\item[(b)] If $u\geq 2$, then we also have
\begin{align*}
\|p_{u}(x,y,\z(t))\|_{L^2(tdt)}
&\lesssim
\frac{1}{|x-y|^{u-2}} \;
\frac{1}{w^{+}_{\alpha}(B(x,|y-x|))},\qquad x\ne y.
\end{align*}
\end{itemize}
\end{lem}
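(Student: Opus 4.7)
The proof runs parallel to that of Lemma \ref{lemat}, with a new argument to absorb the additional factors $(\lo(\z))^{|\eta|/2}\exp(\lo(\z)/(8\z))$ --- which after squaring in the $L^2$ computation become $(\lo(\z))^{|\eta|}\exp(\lo(\z)/(4\z))$ --- and to account for the reduced $x$-power $\eps-\xi-\eta$ in place of $\eps-\xi$. After changing variables $t \mapsto \z = \tanh t$ (so $dt = d\z/(1-\z^2)$) and applying Minkowski's integral inequality, the task reduces to controlling, uniformly in $s\in[-1,1]^d$, the inner $\z$-integral
\[
I := \int_{0}^{1} \z^{-a}\, (\lo(\z))^{|\eta|}\, \exp(\lo(\z)/(4\z))\, (\e\b)^{2C}\, d\z, \qquad a = 2d+2|\a|+2|\eps|-|\xi|-|\rho|+u.
\]
For part (b) an additional factor $\lo(\z)$ enters the integrand because $t\,dt = (\lo(\z)/2)\,d\z/(1-\z^2)$. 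The key claim I will prove is $I \lesssim (q_+)^{-(a-|\eta|-1)}$ (respectively $(q_+)^{-(a-|\eta|-2)}$ for part (b)), to be established by splitting $I = I_1 + I_2$ with $I_1 = \int_0^{1/2}$ and $I_2 = \int_{1/2}^{1}$.

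For $I_1$, the elementary bounds $\lo(\z) \leq 3\z$ and $\lo(\z)/\z \leq 3$ on $(0,1/2]$ yield $(\lo(\z))^{|\eta|}\exp(\lo(\z)/(4\z)) \lesssim \z^{|\eta|}$, so the extra factor is absorbed into a lower power of $\z^{-1}$; what remains is exactly the integral from the proof of Lemma \ref{lemat}, handled by Lemma \ref{comp}\,(b) (with $b = a-|\eta|-2$, $A=\z^{-1}$) followed by Lemma \ref{lem5.4} (with $a=2$), giving $I_1 \lesssim (q_+)^{-(a-|\eta|-1)}$. For $I_2$, the factor $\z^{-a}$ is bounded on $[1/2,1)$ and $\exp(-Cq_+/(2\z)) \leq \exp(-Cq_+/2)$ since $1/\z \geq 1$, so $I_2 \lesssim \exp(-Cq_+/2) \cdot \int_{1/2}^{1}(\lo(\z))^{|\eta|}\exp(\lo(\z)/(4\z))\,d\z$; the remaining integral is finite because the integrand behaves like $(-\log(1-\z))^{|\eta|}(1-\z)^{-1/4}$ near $\z=1$. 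Since the assumptions $\xi+\eta\leq\eps$, $\rho\leq\eps$ together with $u \geq 1$ force $a-|\eta|-1 \geq 2d+2|\a| \geq 1$, the elementary comparison $\exp(-Cq_+/2)\lesssim (q_+)^{-(a-|\eta|-1)}$ for $q_+>0$ (polynomial versus exponential) yields $I_2 \lesssim (q_+)^{-(a-|\eta|-1)}$, completing the bound on $I$.

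Taking the square root and using $q_+ \geq |x-y|^2$ to peel off the factor $|x-y|^{-(u-1)}$, the leftover exponent of $q_+$ equals precisely $-d-|\a|-|\delta|$ with $\delta = \eps-\xi/2-\rho/2-\eta/2$. The hypothesis $\xi+\eta\leq\eps$ (combined with $\rho\leq\eps$) guarantees $\delta \in [0,\infty)^d$, so the outer prefactor satisfies $x^{\eps-\xi-\eta}y^{\eps-\rho}\leq (x+y)^{2\delta}$. Applying Lemma \ref{lem4} with this $\delta$ and $\kappa = \xi/2+\rho/2+\eta/2$ (note $\delta+\kappa=\eps$) delivers part (a). Part (b) follows by the same steps with $|\eta|$ effectively replaced by $|\eta|+1$ throughout the $I_1$, $I_2$ analysis, yielding $I \lesssim (q_+)^{-(a-|\eta|-2)}$ and then $|x-y|^{-(u-2)}$ after the analogous extraction step. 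The main obstacle will be the treatment of $I_2$, where one must simultaneously exploit the boundedness of $\z^{-a}$ on $[1/2,1)$, the integrability near $\z=1$ of the blow-up factor $(\lo(\z))^{|\eta|}\exp(\lo(\z)/(4\z))$, and the polynomial-versus-exponential comparison needed to recover the correct $q_+$-decay rate required by the subsequent application of Lemma \ref{lem4}.
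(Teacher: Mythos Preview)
Your proposal is correct. Both your argument and the paper's split the $\zeta$-integration into $(0,1/2)$ and $(1/2,1)$, and handle the small-$\zeta$ piece $I_1$ by reducing to Lemma~\ref{lemat} via $\lo(\z)\simeq\z$ and the boundedness of $\exp(\lo(\z)/(8\z))$ there. The difference lies in the treatment of $I_2$. The paper exploits the stronger bound $\e\b\lesssim\exp(-|x|/4-|y|/4)$, valid for $\z\in[1/2,1)$ (coming from $q_++q_-=2|x|^2+2|y|^2$ and $\exp(-s^2)\lesssim\exp(-s)$), which immediately absorbs the polynomial prefactor $x^{\eps-\xi-\eta}y^{\eps-\rho}$ and yields $I_2\lesssim(|x|+|y|)^{-2d-2|\alpha|}\lesssim 1/w_\alpha^+(B(x,|y-x|))$ directly, without recourse to Lemma~\ref{lem4}. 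You instead retain only the weaker bound $\exp(-Cq_+/2)$, convert it to $(q_+)^{-(a-|\eta|-1)}$ by the polynomial-versus-exponential comparison, and then channel both $I_1$ and $I_2$ through the same endpoint, namely Lemma~\ref{lem4} applied with $\delta=\eps-\xi/2-\rho/2-\eta/2$ and $\kappa=\xi/2+\rho/2+\eta/2$. Your route is more uniform (one final step serves both pieces) and makes the role of the parameter $\eta$ in shifting the $q_+$-exponent fully explicit; the paper's route is shorter for $I_2$ and sidesteps Lemma~\ref{lem4} for that piece altogether.
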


\begin {proof}
We will prove only the first inequality, leaving the remaining one to the reader. To show the required estimate we change the variable according to \eqref{zz} and split the region of integration in $\z$ onto $(0,1\slash 2)$ and $(1\slash 2,1)$, denoting the corresponding integrals by $I_{1}$ and $I_{2}$, respectively. Then the conclusion for $I_{1}$ is a straightforward consequence of Lemma \ref{lemat} (a), see the asymptotics \eqref{cal}. We now focus on $I_{2}$. Since $\exp(-s^2)\lesssim\exp(-s)$, when $\z\in(1\slash 2,1)$ we have the estimates $\e\b\lesssim\exp(-\frac{|x|}{4}-\frac{|y|}{4})$, $\exp\big(\frac{\lo(\z)}{8\z}\big)\lesssim(1-\z)^{-1\slash 4}$ and $\z^{-1}\simeq 1$. Thus for $\z\in(1\slash 2,1)$ we obtain
\begin{align*}
I_{2}
&\lesssim
\bigg(\int_{1\slash 2}^{1}
\big(\lo(\z)\big)^{|\eta|}x^{2\eps-2\xi-2\eta}y^{2\eps-2\rho}(1-\z)^{-1\slash 2}
\bigg(\int\exp\Big(-\frac{C|x|}{4}-\frac{C|y|}{4}\Big)\,\Pi_{\a+\eps}(ds)\bigg)^{2}
\,d\z\bigg)^{1\slash 2}\\
&\lesssim
x^{\eps-\xi-\eta}y^{\eps-\rho}\exp\Big(-\frac{C|x|}{4}-\frac{C|y|}{4}\Big)
\lesssim
(|x|+|y|)^{-2d-2|\a|}
\lesssim
\frac{1}{w_{\a}^{+}(B(x,|y-x|))},
\end{align*}
as desired.
\end {proof}

\begin{lem}\label{lem2}
Assume that $\a\in[-1\slash 2, \infty)^d$ and $\xi, \rho, \eps\in\Z$ are fixed and such that $\xi+\eta\leq\eps$ and $\rho\leq\eps$. Given $C>0$ and $u\in\mathbb{R}$, consider the function acting on $\R\times\R\times \big\{(z,\z) : |z|<\sqrt{\lo(\z)\slash 2}\big\}$ and defined by
\begin{align*}
p_{u}(x,y,z,\z)=&
\sqrt{1-\z^2}\,\z^
{-d-|\a|-|\eps|+|\xi|\slash 2+|\rho|\slash 2-u\slash 2}\,
\big(\lo(\z)\big)^{-d\slash 4}(x+z)^{\eps-\xi}y^{\eps-\rho}\,\chi_{\{x+z\in\R\}}\\
&\times
\exp\bigg(\frac{\lo(\z)}{8\z}\bigg)\int_{[-1,1]^{d}}\big(\e\b\big)^{C}\,\Pi_{\a+\eps}(ds).
\end{align*}
\begin{itemize}
\item[(a)] If $u\geq 1$, then we have the estimate
\begin{align*}
\|p_{u}(x,y,z,\z(t))\|_{L^2(A,dtdz)}
&\lesssim
\frac{1}{|x-y|^{u-1}} \;
\frac{1}{w^{+}_{\alpha}(B(x,|y-x|))},\qquad x\ne y,
\end{align*}
where $t$ and $\z$ are related as in $\eqref{zz}$.
\item[(b)] If $u\geq 2$, then we also have
\begin{align*}
\|p_{u}(x,y,z,\z(t))\|_{L^2(A,tdtdz)}
&\lesssim
\frac{1}{|x-y|^{u-2}} \;
\frac{1}{w^{+}_{\alpha}(B(x,|y-x|))},\qquad x\ne y.
\end{align*}
\end{itemize}
\end{lem}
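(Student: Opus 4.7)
The plan is to reduce Lemma \ref{lem2} to the already stated Lemma \ref{lem1} by performing the $z$-integration first, at fixed $\z$. The key observation is that the only $z$-dependent factors in $|p_{u}(x,y,z,\z)|^{2}$ are $(x+z)^{2(\eps-\xi)}$ and the characteristic function $\chi_{\{x+z\in\R\}}$; every remaining factor, including the $s$-integral $\int(\e\bbb)^{C}\,\Pi_{\a+\eps}(ds)$, depends only on $x$, $y$ and $\z$. Thus the whole matter reduces to estimating
$I(x,\z):=\int_{|z|<\sqrt{\lo(\z)\slash 2},\,x+z\in\R}(x+z)^{2(\eps-\xi)}\,dz$
and feeding the bound back into the template of Lemma \ref{lem1}.

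To estimate $I(x,\z)$, I would proceed coordinate by coordinate. For each $i$, a direct computation shows
\[
\int_{|z_{i}|<\sqrt{\lo(\z)\slash 2},\,x_{i}+z_{i}>0}(x_{i}+z_{i})^{2(\eps_{i}-\xi_{i})}\,dz_{i}\lesssim\sqrt{\lo(\z)}\,\bigl(x_{i}+\sqrt{\lo(\z)}\bigr)^{2(\eps_{i}-\xi_{i})},
\]
trivially when $\eps_{i}-\xi_{i}=0$ and by integrating $u^{2}\,du$ when $\eps_{i}-\xi_{i}=1$. Taking the product over $i=1,\ldots,d$ and using the elementary bound $(x_{i}+\sqrt{L})^{2(\eps_{i}-\xi_{i})}\leq 2(x_{i}^{2(\eps_{i}-\xi_{i})}+L^{\eps_{i}-\xi_{i}})$ to expand, one obtains
\[
I(x,\z)\lesssim\bigl(\lo(\z)\bigr)^{d\slash 2}\sum_{\eta\leq\eps-\xi}x^{2(\eps-\xi-\eta)}\bigl(\lo(\z)\bigr)^{|\eta|},
\]
where $\eta$ ranges over $\Z$ with $\eta\leq\eps-\xi$.

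The volume factor $(\lo(\z))^{d\slash 2}$ exactly cancels the $(\lo(\z))^{-d\slash 2}$ sitting inside $|p_{u}|^{2}$, and what is left, after the $z$-integration, is a finite sum whose $\z$-dependence coincides (up to constants) with $|p^{(1)}_{u,\eta}(x,y,\z)|^{2}$, where $p^{(1)}_{u,\eta}$ is precisely the function of Lemma \ref{lem1} associated to the given $\xi$, $\rho$, $C$ and additional parameter $\eta$. Integrating in $t$, equivalently against $d\z\slash(1-\z^{2})$, and invoking Lemma \ref{lem1}(a) term by term yields
\[
\|p_{u}\|_{L^{2}(A,dtdz)}\lesssim\sum_{\eta\leq\eps-\xi}\|p^{(1)}_{u,\eta}\|_{L^{2}(dt)}\lesssim\frac{1}{|x-y|^{u-1}}\,\frac{1}{w_{\a}^{+}(B(x,|y-x|))},
\]
which is claim (a). Claim (b) follows in exactly the same manner: the extra factor of $t$ passes through the $z$-integration unaffected, and Lemma \ref{lem1}(b) is applied in place of (a); this is the reason for the threshold $u\geq 2$ there.

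I do not expect a serious difficulty. The only point requiring a little care is the coordinate-by-coordinate bookkeeping in the first step, but it is entirely routine. What is essential is that the exponent $-d\slash 4$ on $\lo(\z)$ built into the definition of $p_{u}$ is precisely calibrated so that, upon squaring and integrating in $z$ over the parabolic cone slice, it absorbs the $(\lo(\z))^{d\slash 2}$ volume factor and reduces the estimate cleanly to Lemma \ref{lem1}.
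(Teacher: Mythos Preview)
Your proposal is correct and follows essentially the same route as the paper: integrate out the $z$-variable first to produce the factor $(\lo(\z))^{d\slash 2}\sum_{\eta\le\eps-\xi}x^{2(\eps-\xi-\eta)}(\lo(\z))^{|\eta|}$, let $(\lo(\z))^{d\slash 2}$ cancel the $(\lo(\z))^{-d\slash 2}$ in $|p_u|^2$, and then invoke Lemma~\ref{lem1} term by term. The only cosmetic difference is that the paper first bounds $(x+z)^{2(\eps-\xi)}\chi_{\{x+z\in\R\}}$ pointwise by $\sum_{\eta}x^{2(\eps-\xi-\eta)}(\lo(\z))^{|\eta|}$ (using $|z_i|<\sqrt{\lo(\z)\slash 2}$) and then integrates trivially over the ball, whereas you integrate first and then expand; the resulting estimate is the same.
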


\begin {proof}
As in the proof of Lemma \ref{lem1} we show only the first estimate. Since $|z|<\sqrt{\lo(\z)\slash 2}$ on the set $A$, we get
\begin{equation}\label{New}
(x+z)^{2\eps-2\xi}\chi_{\{x+z\in\R\}}\lesssim
\sum_{0\leq\eta\leq \eps-\xi} x^{2\eps-2\xi-2\eta}\big(\lo(\z)\big)^{|\eta|}.
\end{equation}
Thus we have
$$
\int_{|z|<\sqrt{\lo(\z)\slash 2}} (x+z)^{2\eps-2\xi}\chi_{\{x+z\in\R\}}\,dz
\lesssim
\sum_{0\leq\eta\leq \eps-\xi} x^{2\eps-2\xi-2\eta}\big(\lo(\z)\big)^{|\eta|+d\slash 2}.
$$
Now changing the variable according to \eqref{zz} and then applying the above estimate we obtain
\begin{align*}
\|p&_{u}(x,y,z,\z(t))\|_{L^2(A,dtdz)}\\
=&
\bigg(\int_{0}^{1}\int_{|z|<\sqrt{\lo(\z)\slash 2}}
\Big(\frac{1}{\z}\Big)^{2d+2|\a|+2|\eps|-|\xi|-|\rho|+u}\big(\lo(\z)\big)
^{-d\slash 2}
(x+z)^{2\eps-2\xi}y^{2\eps-2\rho}\chi_{\{x+z\in\R\}}\\
&\times
\exp\bigg(\frac{\lo(\z)}{4\z}\bigg)
\bigg(\int\big(\e\b\big)^{C}\,\Pi_{\a+\eps}(ds)\bigg)^{2}
\,dz\,d\z\bigg)^{1\slash 2}\\
\lesssim&
\sum_{0\leq\eta\leq \eps-\xi}
\bigg(\int_{0}^{1}
\Big(\frac{1}{\z}\Big)^{2d+2|\a|+2|\eps|-|\xi|-|\rho|+u}\big(\lo(\z)\big)^{|\eta|}
x^{2\eps-2\xi-2\eta}y^{2\eps-2\rho}\\
&\times
\exp\bigg(\frac{\lo(\z)}{4\z}\bigg)
\bigg(\int\big(\e\b\big)^{C}\,\Pi_{\a+\eps}(ds)\bigg)^{2}
\,d\z\bigg)^{1\slash 2}.
\end{align*}
This, in view of Lemma \ref{lem1} (a), gives the conclusion.
\end {proof}

\begin {proof}[Proof of Theorem \ref{kes}; the case of $S_{V}^{\eps,+}$.]
Notice that on the set $A\cap\{(z,t) : x+z\in\R\}$ we have, see \eqref{os}, 
\begin{equation}\label{fi}
\v_{\a}(x,z,t)\lesssim \big(\lo(\z)\big)^{-d\slash 2}.
\end{equation}
Using this observation, the estimate \eqref{nier2} of $\partial_{t}G_{t}^{\a,\eps}(x,y)$, Lemma \ref{qz} and the fact that $|z|<\sqrt{\lo(\z)\slash 2}$ on the set $A$, we obtain
\begin{align}\nonumber
|K_{z,t}^{\a,\eps,V}(x,y)|
\lesssim&
\sqrt{1-\z^2}\,\z^{-d-|\a|-|\eps|-1}\,\big(\lo(\z)\big)^{-d\slash 4}(x+z)^{\eps}y^{\eps}\chi_{\{x+z\in\R\}}\\\nonumber
&\times
\int\big(\e\zz\big)^{1\slash 2}\,\Pi_{\a+\eps}(ds)\\\label{nier5}
\lesssim&
\sqrt{1-\z^2}\,\z^{-d-|\a|-|\eps|-1}\,\big(\lo(\z)\big)^{-d\slash 4}(x+z)^{\eps}y^{\eps}\chi_{\{x+z\in\R\}}\\\nonumber
&\times
\exp\bigg(\frac{\lo(\z)}{8\z}\bigg)\int\big(\e\b\big)^{1\slash 4}
\,\Pi_{\a+\eps}(ds).
\end{align}
Now the growth estimate follows with the aid of Lemma \ref{lem2} (b) (specified to $u=2$, $\xi=\rho=0$). 

Next, our task is to show that
\begin{align*}
\big\|K_{z,t}^{\a,\eps,V}(x,y)-K_{z,t}^{\a,\eps,V}(x',y)\big\|_{L^{2}(A,tdtdz)}
\lesssim
\sqrt{\frac{|x-x'|}{|x-y|}} \;
\frac{1}{w_{\a}^{+}(B(x,|y-x|))},\qquad |x-y|>2|x-x'|.
\end{align*}
It is convenient to split the region of integration $A$ above onto four subsets depending on whether $x+z,x'+z$ are in $\R$ or not. More precisely, let 
\begin{align*}
A_{1}&=A\cap\{(z,t) : x+z\in\R, x'+z\in\R\},\\
A_{2}&=A\cap\{(z,t) : x+z\in\R, x'+z\notin\R\},\\
A_{3}&=A\cap\{(z,t) : x+z\notin\R, x'+z\in\R\},\\
A_{4}&=A\cap\{(z,t) : x+z\notin\R, x'+z\notin\R\}.
\end{align*}
We will estimate separately the $L^2(A_i,tdtdz)$ norms, $i=1,\ldots,4$, of the relevant difference.
The treatment of the integral norm over $A_{4}$ is trivial since the integrand vanishes. For the remaining norms we consider three cases.

{\bf Case 1:} \textbf{The norm in} $\mathbf{L^2(A_{1},tdtdz)}.$
Using the triangle inequality we get
\begin{align*}
|K_{z,t}^{\a,\eps,V}(x,y)-K_{z,t}^{\a,\eps,V}(x',y)|
\leq&
\big|\partial_{t}G_{t}^{\a,\eps}(x+z,y)-\partial_{t}G_{t}^{\a,\eps}(x'+z,y)\big|
\sqrt{\v_{\a}(x',z,t)}\\
&+
\big|\partial_{t}G_{t}^{\a,\eps}(x+z,y)\big|\big|\sqrt{\v_{\a}(x,z,t)}-\sqrt{\v_{\a}(x',z,t)}\big|\\
\equiv&
\,I_{1}(x,x',y,z,t)+I_{2}(x,x',y,z,t).
\end{align*}
We will treat $I_{1}$ and $I_{2}$ separately. By the mean value theorem
\begin{align*}
I_{1}(x,x',y,z,t)\leq&
|x-x'|\big|\nabla_{\!x}\partial_{t}G_{t}^{\a,\eps}(x+z,y)\big|_{x=\t}\big|\sqrt{\v_{\a}(x',z,t)},
\end{align*}
where $\t$ is a convex combination of $x$ and $x'$ that depends also on $z$ and $t$. To show the desired bound for the norm of $I_{1}$ it suffices to check that for each $i=1,\ldots,d$, we have 
$$
\big\|\partial_{x_{i}}\big(\partial_{t}G_{t}^{\a,\eps}(x+z,y)\big)\big|_{x=\t}
\sqrt{\v_{\a}(x',z,t)}\big\|_{L^{2}(A_{1},tdtdz)}
\lesssim
\frac{1}{|x-y|} \;
\frac{1}{w_{\a}^{+}(B(x,|y-x|))},
$$
for $|x-y|>2|x-x'|$. Applying \eqref{nier3}, \eqref{fi}, Lemma \ref{qz} and then Lemma \ref{lemma4.5} (with $z=\t$ and then $z=x^{*}$) we get
\begin{align}\nonumber
\big|\partial_{x_{i}}&\big(\partial_{t}G_{t}^{\a,\eps}(x+z,y)\big)\big|_{x=\t}
\sqrt{\v_{\a}(x',z,t)}\big|\\\label{nier7}
\lesssim&
\sqrt{1-\z^2}\,\z^{-d-|\a|-|\eps|-3\slash 2}\,\big(\lo(\z)\big)^{-d\slash 4}
(\t+z)^{\eps}y^{\eps}\int\big(\e\ww\big)^{1\slash 4}\,\Pi_{\a+\eps}(ds)\\\nonumber
&+
\chi_{\{\eps_{i}=1\}}\sqrt{1-\z^{2}}\,\z^{-d-|\a|-|\eps|-1}\,
\big(\lo(\z)\big)^{-d\slash 4}
(\t+z)^{\eps-e_{i}}y^{\eps}\\\nonumber
&\,\,\,\,\,\times
\int\big(\e\ww\big)^{1\slash 2}\,\Pi_{\a+\eps}(ds)\\\nonumber
\lesssim&
\sqrt{1-\z^2}\,\z^{-d-|\a|-|\eps|-3\slash 2}\,\big(\lo(\z)\big)^{-d\slash 4}
(x^{*}+z)^{\eps}y^{\eps}\\\nonumber
&\,\,\,\,\,\times
\exp\bigg(\frac{\lo(\z)}{16\z}\bigg)\int\big(\e\eee\big)^{1\slash 128}\,\Pi_{\a+\eps}(ds)\\\nonumber
&+
\chi_{\{\eps_{i}=1\}}\sqrt{1-\z^{2}}\,\z^{-d-|\a|-|\eps|-1}\,
\big(\lo(\z)\big)^{-d\slash 4}
(x^{*}+z)^{\eps-e_{i}}y^{\eps}\\\nonumber
&\,\,\,\,\,\times
\exp\bigg(\frac{\lo(\z)}{8\z}\bigg)\int\big(\e\eee\big)^{1\slash 64}\,\Pi_{\a+\eps}(ds),
\end{align}
provided that $|x-y|>2|x-x'|$. Now Lemma \ref{lem2} (b) (taken with $u=3$, $\xi=\rho=0$ and $\xi=e_{i}$, $\rho=0$; the application is possible since on $A_{1}$ we have $x^{*}+z\in\R$) together with Lemma \ref{F2} (taken with $z=x^{*}$) leads to the required bound involving $I_{1}$. 

To show the norm estimate of $I_{2}$ we use the inequality $(a-b)^{2}\leq |a^2-b^2|$, which holds for any $a,b\geq 0$. Then the mean value theorem implies
\begin{align}\label{nier10}
\big|\sqrt{\v_{\a}(x,z,t)}-\sqrt{\v_{\a}(x',z,t)}\big|^{2}\leq&
|\v_{\a}(x,z,t)-\v_{\a}(x',z,t)|
\leq
|x-x'|\big|\nabla_{\!x}\v_{\a}(x,z,t)\big|_{x=\t}\big|,
\end{align}
where $\t$ is a convex combination of $x$ and $x'$ depending also on $z$ and $t=t(\z)$. 
Changing the variable according to \eqref{zz} and then applying sequently the above estimate, \eqref{nier2}, Lemma \ref{qz}, inequality \eqref{New} (with $\xi=0$) and Lemma \ref{fakt}, we get
\begin{align*}
\|I&_{2}(x,x',y,z,t)\|_{L^{2}(A_{1},tdtdz)}\\
\lesssim&
\sqrt{|x-x'|}
\bigg(\int_{0}^{1}\int_{|z|<\sqrt{\lo(\z)\slash 2}}
\lo(\z)
\Big(\frac{1}{\z}\Big)^{2d+2|\a|+2|\eps|+2}(x+z)^{2\eps}y^{2\eps}
\chi_{\{x+z\in\R\}}\chi_{\{x'+z\in\R\}}\\
&\times
\big|\nabla_{\!x}\v_{\a}(x,z,t(\z))\big|_{x=\t}\big|\exp\bigg(\frac{\lo(\z)}{4\z}\bigg)
\bigg(\int\big(\e\b\big)^{1\slash 4}\,\Pi_{\a+\eps}(ds)\bigg)^{2}
\,dz\,d\z\bigg)^{1\slash 2}\\
\lesssim&
\sqrt{|x-x'|}
\sum_{0\leq\eta\leq\eps}
\bigg(\int_{0}^{1}
\big(\lo(\z)\big)^{1+|\eta|}
\Big(\frac{1}{\z}\Big)^{2d+2|\a|+2|\eps|+5\slash 2}\,
x^{2\eps-2\eta}y^{2\eps}
\exp\bigg(\frac{\lo(\z)}{4\z}\bigg)\\
&\times
\bigg(\int\big(\e\b\big)^{1\slash 4}\,\Pi_{\a+\eps}(ds)\bigg)^{2}
\,d\z\bigg)^{1\slash 2},
\end{align*}
provided that $|x-y|>2|x-x'|$. Finally, an application of Lemma \ref{lem1} (b) (specified to $u=5\slash 2$, $\xi=\rho=0$) gives the desired estimate, so the conclusion related to $A_{1}$ follows. 

{\bf Case 2:} \textbf{The norm in} $\mathbf{L^2(A_{2},tdtdz)}.$
For $k=1,\ldots,d$, we define the sets 
\begin{align*}
A_{2}^{k}=A\cap\{(z,t) : x+z\in\R, z_{k}\leq-x_{k}'\}. 
\end{align*}
Since these sets cover $A_{2}$ and on each of them $K_{z,t}^{\a,\eps,V}(x',y)=0$, our task reduces to showing that
\begin{align*}
\big\|K_{z,t}^{\a,\eps,V}(x,y)\big\|_{L^{2}(A_{2}^{k},tdtdz)}
&\lesssim
\sqrt{\frac{|x-x'|}{|x-y|}} \;
\frac{1}{w_{\a}^{+}(B(x,|y-x|))},\qquad |x-y|>2|x-x'|.
\end{align*}
Changing the variable according to \eqref{zz}, applying the estimate \eqref{nier5} and then the inequality \eqref{New} (with $\xi=0$), we obtain
\begin{align*}
\big\|K&_{z,t}^{\a,\eps,V}(x,y)\big\|_{L^{2}(A_{2}^{k},tdtdz)}\\
\lesssim&
\sum_{0\leq\eta\leq\eps}
\bigg(\int_{0}^{1}\int_{|z|<\sqrt{\lo(\z)\slash 2}}\chi_{\{-x_{k}<z_{k}\leq-x_{k}'\}}
\big(\lo(\z)\big)^{1+|\eta|-d\slash 2}
\Big(\frac{1}{\z}\Big)^{2d+2|\a|+2|\eps|+2}\,
x^{2\eps-2\eta}y^{2\eps}\\
&\times
\exp\bigg(\frac{\lo(\z)}{4\z}\bigg)
\bigg(\int\big(\e\b\big)^{1\slash 4}\,\Pi_{\a+\eps}(ds)\bigg)^{2}
\,dz\,d\z\bigg)^{1\slash 2}.
\end{align*}
Then using the fact that 
\begin{equation}\label{nier6}
\big(\lo(\z)\big)^{-d\slash 2}\int_{|z|<\sqrt{\lo(\z)\slash 2}}\chi_{\{-x_{k}<z_{k}\leq-x_{k}'\}}\,dz
\lesssim
|x-x'|\big(\lo(\z)\big)^{-1\slash 2}
\lesssim
|x-x'|\z^{-1\slash 2}
\end{equation}
and Lemma \ref{lem1} (b) (taken with $u=5\slash 2$, $\xi=\rho=0$) we arrive at the desired conclusion. 

{\bf Case 3:} \textbf{The norm in} $\mathbf{L^2(A_{3},tdtdz)}.$
Here we proceed in a similar way as in Case 2, this time we also use Lemma \ref{F2} (taken with $\gamma=1\slash 2$ and $z=x'$).

The first smoothness estimate is justified. The proof will be finished once we show that
\begin{align*}
\big\|K_{z,t}^{\a,\eps,V}(x,y)-K_{z,t}^{\a,\eps,V}(x,y')\big\|_{L^{2}(A,tdtdz)}
&\lesssim
\frac{|y-y'|}{|x-y|} \;
\frac{1}{w_{\a}^{+}(B(x,|y-x|))},\qquad |x-y|>2|y-y'|.
\end{align*}
By the mean value theorem it is enough to verify that for any $i=1,\ldots,d$, we have
\begin{align*}
\big\|\partial_{y_{i}}K_{z,t}^{\a,\eps,V}(x,y)\big|_{y=\t}\big\|_{L^{2}(A,tdtdz)}
&\lesssim
\frac{1}{|x-y|} \;
\frac{1}{w_{\a}^{+}(B(x,|y-x|))},\qquad |x-y|>2|y-y'|,
\end{align*}
where $\t$ is a convex combination of $y$ and $y'$ that depends also on $t$ and $z$. 
Taking into account \eqref{kv} and proceeding similarly as in \eqref{nier7}, with the aid of the symmetric version of \eqref{nier3}, \eqref{fi}, Lemma \ref{qz} and Lemma \ref{lemma4.5}, we get
\begin{align*}
\big|\partial_{y_{i}}K_{z,t}^{\a,\eps,V}(x,y)\big|_{y=\t}\big|
\lesssim&
\sqrt{1-\z^2}\,\z^{-d-|\a|-|\eps|-3\slash 2}\,\big(\lo(\z)\big)^{-d\slash 4}
(x+z)^{\eps}(y^{*})^{\eps}\,\chi_{\{x+z\in\R\}}\\
&\,\,\,\,\,\times
\exp\bigg(\frac{\lo(\z)}{16\z}\bigg)\int\big(\e\fff\big)^{1\slash 128}\,\Pi_{\a+\eps}(ds)\\
&+
\chi_{\{\eps_{i}=1\}}\sqrt{1-\z^{2}}\,\z^{-d-|\a|-|\eps|-1}\,
\big(\lo(\z)\big)^{-d\slash 4}
(x+z)^{\eps}(y^{*})^{\eps-e_{i}}\,\chi_{\{x+z\in\R\}}\\
&\,\,\,\,\,\times
\exp\bigg(\frac{\lo(\z)}{8\z}\bigg)\int\big(\e\fff\big)^{1\slash 64}\,\Pi_{\a+\eps}(ds),
\end{align*}
for $|x-y|>2|y-y'|$. Now applications of Lemma \ref{lem2} (b) (taken with $u=3$, $\xi=\rho=0$ and $\xi=0$, $\rho=e_{i}$) and then Lemma \ref{F2} lead to the required bound.

The proof of the case of $S_{V}^{\eps,+}$ in Theorem \ref{kes} is complete.
\end {proof}

\begin {proof}[Proof of Theorem \ref{kes}; the case of $S_{H}^{j,\eps,+}$.]
The reasoning is essentially a repetition of the arguments from the proof of Theorem \ref{kes}, the case of $S_{V}^{\eps,+}$. Firstly, we focus on the growth condition. Using the estimate \eqref{nier8} of $\delta_{j,x}G_{t}^{\a,\eps}(x,y)$ (here we use in addition the inequality $\e\b\leq\big(\e\b\big)^{1\slash 2}$), \eqref{fi} and Lemma \ref{qz}, we get
\begin{align}\label{nier12}
|K_{z,t}^{\a,\eps,H,j}(x,y)|
\lesssim&
\sqrt{1-\z^2}\,\z^{-d-|\a|-|\eps|-1\slash 2}\,\big(\lo(\z)\big)^{-d\slash 4}(x+z)^{\eps}y^{\eps}\chi_{\{x+z\in\R\}}\\\nonumber
&\,\,\,\,\,\times
\exp\bigg(\frac{\lo(\z)}{8\z}\bigg)\int\big(\e\b\big)^{1\slash 4}
\,\Pi_{\a+\eps}(ds)\\\nonumber
&+
\chi_{\{\eps_{j}=1\}}\sqrt{1-\z^2}\,\z^{-d-|\a|-|\eps|}\,\big(\lo(\z)\big)^{-d\slash 4}(x+z)^{\eps-e_{j}}y^{\eps}\chi_{\{x+z\in\R\}}\\\nonumber
&\,\,\,\,\,\times
\exp\bigg(\frac{\lo(\z)}{8\z}\bigg)\int\big(\e\b\big)^{1\slash 4}
\,\Pi_{\a+\eps}(ds),
\end{align}
which in view of Lemma \ref{lem2} (a) (taken with $u=1$, $\xi=\rho=0$ and $\xi=e_{j}$, $\rho=0$) gives the required bound. 

To verify the smoothness conditions we first show that
\begin{align*}
\big\|K_{z,t}^{\a,\eps,H,j}(x,y)-K_{z,t}^{\a,\eps,H,j}(x',y)\big\|_{L^{2}(A,dtdz)}
\lesssim
\sqrt{\frac{|x-x'|}{|x-y|}} \;
\frac{1}{w_{\a}^{+}(B(x,|y-x|))},\qquad |x-y|>2|x-x'|.
\end{align*}
Proceeding similarly as in the proof of the case of $S_{V}^{\eps,+}$ in Theorem \ref{kes}, we split $A$ onto $A_{1},A_{2},A_{3},A_{4}$. The analysis related to $A_{4}$ is trivial. For the remaining sets we consider three cases.

{\bf Case 1:} \textbf{The norm in} $\mathbf{L^2(A_1,dtdz)}.$
On $A_{1}$ we have
\begin{align*}
|K_{z,t}^{\a,\eps,H,j}(x,y)-K_{z,t}^{\a,\eps,H,j}(x',y)|
\leq&
\big|\delta_{j,x}G_{t}^{\a,\eps}(x+z,y)-\delta_{j,x}G_{t}^{\a,\eps}(x'+z,y)\big|
\sqrt{\v_{\a}(x',z,t)}\\
&+
\big|\delta_{j,x}G_{t}^{\a,\eps}(x+z,y)\big|\big|\sqrt{\v_{\a}(x,z,t)}-\sqrt{\v_{\a}(x',z,t)}\big|\\
\equiv&
\,J_{1}(x,x',y,z,t)+J_{2}(x,x',y,z,t).
\end{align*}
We shall treat $J_{1}$ and $J_{2}$ separately. Focusing on $J_{1}$ and using the mean value theorem and \eqref{fi}, we obtain
\begin{align*}
J_{1}(x,x',y,z,t)
\leq&
|x-x'|\big|\nabla_{\!x}\big(\delta_{j,x}G_{t}^{\a,\eps}(x+z,y)\big)\big|_{x=\t}\big|
\big(\lo(\z)\big)^{-d\slash 4},
\end{align*}
where $\t$ is a convex combination of $x$ and $x'$ (notice that $\t$ depends on $z$ and $t$). Thus it suffices to verify that for any $i=1,\ldots,d$,
\begin{equation}\label{est6}
\big\|\partial_{x_{i}}\big(\delta_{j,x}G_{t}^{\a,\eps}(x+z,y)\big)\big|_{x=\t}
\big(\lo(\z)\big)^{-d\slash 4}\big\|_{L^{2}(A_{1},dtdz)}
\lesssim
\frac{1}{|x-y|} \;
\frac{1}{w_{\a}^{+}(B(x,|y-x|))},
\end{equation}
for $|x-y|>2|x-x'|$. Using sequently the estimate of $\partial_{x_{i}}\delta_{j,x}G_{t}^{\a,\eps}(x,y)$ that is implicitly contained in \eqref{nier9} (here we use in addition the inequality $\e\b\leq\big(\e\b\big)^{1\slash 2}$), Lemma \ref{qz} and then Lemma \ref{lemma4.5}, we get
\begin{align*}
\big|\partial_{x_{i}}\big(\delta_{j,x}G_{t}^{\a,\eps}(x+z,y)\big)\big|_{x=\t}\big|
\lesssim&
\sqrt{1-\z^2}\,\z^{-d-|\a|-|\eps|-1}
(x^{*}+z)^{\eps}y^{\eps}
\exp\bigg(\frac{\lo(\z)}{16\z}\bigg)\\
&\,\,\,\,\,\times
\int\big(\e\eee\big)^{1\slash 128}\,\Pi_{\a+\eps}(ds)\\
&+
\chi_{\{\eps_{j}=1\}}\sqrt{1-\z^{2}}\,\z^{-d-|\a|-|\eps|-1\slash 2}
(x^{*}+z)^{\eps-e_{j}}y^{\eps}
\exp\bigg(\frac{\lo(\z)}{8\z}\bigg)\\
&\,\,\,\,\,\times
\int\big(\e\eee\big)^{1\slash 64}\,\Pi_{\a+\eps}(ds)\\
&+
\chi_{\{\eps_{i}=1\}}\sqrt{1-\z^{2}}\,\z^{-d-|\a|-|\eps|-1\slash 2}
(x^{*}+z)^{\eps-e_{i}}y^{\eps}
\exp\bigg(\frac{\lo(\z)}{8\z}\bigg)\\
&\,\,\,\,\,\times
\int\big(\e\eee\big)^{1\slash 64}\,\Pi_{\a+\eps}(ds)\\
&+
\chi_{\{i\ne j\}}\chi_{\{\eps_{i}=1\}}\chi_{\{\eps_{j}=1\}}
\sqrt{1-\z^{2}}\,\z^{-d-|\a|-|\eps|}
(x^{*}+z)^{\eps-e_{i}-e_{j}}y^{\eps}\\
&\,\,\,\,\,\times
\exp\bigg(\frac{\lo(\z)}{8\z}\bigg)\int\big(\e\eee\big)^{1\slash 64}\,\Pi_{\a+\eps}(ds),
\end{align*}
provided that $|x-y|>2|x-x'|$. Finally, applications of Lemma \ref{lem2} (a) (notice that $x^{*}+z\in\R$ on $A_{1}$) and then Lemma \ref{F2} give \eqref{est6}, and hence also the desired bound for the norm of $J_{1}$.

We now consider $J_{2}$. Changing the variable as in \eqref{zz} and then using sequently \eqref{nier10}, \eqref{nier8}, Lemma \ref{qz}, inequality \eqref{New} twice (with $\xi=0$ and $\xi=e_{j}$) and Lemma \ref{fakt}, we see that
\begin{align*}
\|J&_{2}(x,x',y,z,t)\|_{L^{2}(A_{1},dtdz)}\\
\lesssim&
\sqrt{|x-x'|}
\sum_{0\leq\eta\leq\eps}
\bigg(\int_{0}^{1}
\big(\lo(\z)\big)^{|\eta|}
\Big(\frac{1}{\z}\Big)^{2d+2|\a|+2|\eps|+3\slash 2}\,
x^{2\eps-2\eta}y^{2\eps}
\exp\bigg(\frac{\lo(\z)}{4\z}\bigg)\\
&\,\,\,\,\,\times
\bigg(\int\big(\e\b\big)^{1\slash 4}\,\Pi_{\a+\eps}(ds)\bigg)^{2}
\,d\z\bigg)^{1\slash 2}\\
&+
\chi_{\{\eps_{j}=1\}}\sqrt{|x-x'|}
\sum_{0\leq\eta\leq\eps-e_{j}}
\bigg(\int_{0}^{1}
\big(\lo(\z)\big)^{|\eta|}
\Big(\frac{1}{\z}\Big)^{2d+2|\a|+2|\eps|+1\slash 2}\,
x^{2\eps-2e_{j}-2\eta}y^{2\eps}\\
&\,\,\,\,\,\times
\exp\bigg(\frac{\lo(\z)}{4\z}\bigg)
\bigg(\int\big(\e\b\big)^{1\slash 4}\,\Pi_{\a+\eps}(ds)\bigg)^{2}
\,d\z\bigg)^{1\slash 2},
\end{align*}
for $|x-y|>2|x-x'|$. From here the norm estimate for $J_{2}$ follows by Lemma \ref{lem1} (a) (specified to $u=3\slash 2$, $\xi=\rho=0$ and $\xi=e_{j}$, $\rho=0$). This finishes proving the smoothness estimate related to $A_{1}$. 

{\bf Case 2:} \textbf{The norm in} $\mathbf{L^2(A_2,dtdz)}.$
It is enough to check, see the proof of Theorem \ref{kes}, the case of $S_{V}^{\eps,+}$, that for any $k=1,\ldots,d$,
\begin{align*}
\big\|K_{z,t}^{\a,\eps,H,j}(x,y)\big\|_{L^{2}(A_{2}^{k},dtdz)}
&\lesssim
\sqrt{\frac{|x-x'|}{|x-y|}} \;
\frac{1}{w_{\a}^{+}(B(x,|y-x|))},\qquad |x-y|>2|x-x'|.
\end{align*}
Changing the variable as in \eqref{zz}, using the estimate \eqref{nier12} of $K_{z,t}^{\a,\eps,H,j}(x,y)$, inequality \eqref{New} twice and then \eqref{nier6} we obtain
\begin{align*}
\big\|K&_{z,t}^{\a,\eps,H,j}(x,y)\big\|_{L^{2}(A_{2}^{k},dtdz)}\\
\lesssim&
\sqrt{|x-x'|}
\sum_{0\leq\eta\leq\eps}
\bigg(\int_{0}^{1}\big(\lo(\z)\big)^{|\eta|}
\Big(\frac{1}{\z}\Big)^{2d+2|\a|+2|\eps|+3\slash 2}\,
x^{2\eps-2\eta}y^{2\eps}\\
&\,\,\,\,\,\times
\exp\bigg(\frac{\lo(\z)}{4\z}\bigg)
\bigg(\int\big(\e\b\big)^{1\slash 4}\,\Pi_{\a+\eps}(ds)\bigg)^{2}
\,d\z\bigg)^{1\slash 2}\\
&+
\chi_{\{\eps_{j}=1\}}\sqrt{|x-x'|}
\sum_{0\leq\eta\leq\eps-e_{j}}
\bigg(\int_{0}^{1}\big(\lo(\z)\big)^{|\eta|}
\Big(\frac{1}{\z}\Big)^{2d+2|\a|+2|\eps|+1\slash 2}\,
x^{2\eps-2e_{j}-2\eta}y^{2\eps}\\
&\,\,\,\,\,\times
\exp\bigg(\frac{\lo(\z)}{4\z}\bigg)
\bigg(\int\big(\e\b\big)^{1\slash 4}\,\Pi_{\a+\eps}(ds)\bigg)^{2}
\,d\z\bigg)^{1\slash 2},
\end{align*}
which in view of Lemma \ref{lem1} (a) delivers the desired bound.

{\bf Case 3:} \textbf{The norm in} $\mathbf{L^2(A_3,dtdz)}.$
Here the arguments are analogous to those from Case 2. We leave details to the reader.

Eventually, we show the remaining smoothness estimate
\begin{align*}
\big\|K_{z,t}^{\a,\eps,H,j}(x,y)-K_{z,t}^{\a,\eps,H,j}(x,y')\big\|_{L^{2}(A,dtdz)}
&\lesssim
\frac{|y-y'|}{|x-y|} \;
\frac{1}{w_{\a}^{+}(B(x,|y-x|))},\qquad |x-y|>2|y-y'|.
\end{align*}
In view of the mean value theorem it suffices to prove that for any $i=1,\ldots,d$, 
\begin{align*}
\big\|\partial_{y_{i}}K_{z,t}^{\a,\eps,H,j}(x,y)\big|_{y=\t}\big\|_{L^{2}(A,dtdz)}
&\lesssim
\frac{1}{|x-y|} \;
\frac{1}{w_{\a}^{+}(B(x,|y-x|))},\qquad |x-y|>2|y-y'|,
\end{align*}
where $\t$ is a convex combination of $y$ and $y'$. Using the estimate of $\partial_{y_{i}}\delta_{j,x}G_{t}^{\a,\eps}(x,y)$ that is implicitly contained in \eqref{nier13}, together with \eqref{fi}, Lemma \ref{qz} and Lemma \ref{lemma4.5}, we get
\begin{align*}
\big|\partial_{y_{i}}K_{z,t}^{\a,\eps,H,j}(x,y)\big|_{y=\t}\big|
\lesssim&
\sqrt{1-\z^2}\,\z^{-d-|\a|-|\eps|-1}\,\big(\lo(\z)\big)^{-d\slash 4}
(x+z)^{\eps}(y^{*})^{\eps}\,\chi_{\{x+z\in\R\}}\\
&\,\,\,\,\,\times
\exp\bigg(\frac{\lo(\z)}{16\z}\bigg)\int\big(\e\fff\big)^{1\slash 128}\,\Pi_{\a+\eps}(ds)\\
&+
\chi_{\{\eps_{j}=1\}}\sqrt{1-\z^{2}}\,\z^{-d-|\a|-|\eps|-1\slash 2}\,
\big(\lo(\z)\big)^{-d\slash 4}
(x+z)^{\eps-e_{j}}(y^{*})^{\eps}\,\\
&\,\,\,\,\,\times
\chi_{\{x+z\in\R\}}\exp\bigg(\frac{\lo(\z)}{8\z}\bigg)\int\big(\e\fff\big)^{1\slash 64}\,\Pi_{\a+\eps}(ds)\\
&+
\chi_{\{\eps_{i}=1\}}\sqrt{1-\z^{2}}\,\z^{-d-|\a|-|\eps|-1\slash 2}\,
\big(\lo(\z)\big)^{-d\slash 4}
(x+z)^{\eps}(y^{*})^{\eps-e_{i}}\,\\
&\,\,\,\,\,\times
\chi_{\{x+z\in\R\}}\exp\bigg(\frac{\lo(\z)}{8\z}\bigg)\int\big(\e\fff\big)^{1\slash 64}\,\Pi_{\a+\eps}(ds)\\
&+
\chi_{\{\eps_{i}=1\}}\chi_{\{\eps_{j}=1\}}\sqrt{1-\z^{2}}\,\z^{-d-|\a|-|\eps|}\,
\big(\lo(\z)\big)^{-d\slash 4}
(x+z)^{\eps-e_{j}}(y^{*})^{\eps-e_{i}}\,\\
&\,\,\,\,\,\times
\chi_{\{x+z\in\R\}}\exp\bigg(\frac{\lo(\z)}{8\z}\bigg)\int\big(\e\fff\big)^{1\slash 64}\,\Pi_{\a+\eps}(ds),
\end{align*}
provided that $|x-y|>2|y-y'|$. Now combining Lemma \ref{lem2} (a) with Lemma \ref{F2} gives the required estimate. This finishes proving the case of $S_{H}^{j,\eps,+}$ in Theorem \ref{kes}.
\end {proof}

\begin {proof}[Proof of Theorem \ref{kes}; the case of $S_{H,*}^{j,\eps,+}$.]
We first justify the growth condition. Since $\delta_{j,x}^{*}=-\delta_{j,x}+2x_{j}$, in view of the already justified case of $S_{H}^{j,\eps,+}$ in Theorem \ref{kes}, it suffices to verify that
\begin{align*}
\big\|(x_{j}+z_{j})G_{t}^{\a,\eps}(x+z,y)\sqrt{\v_{\a}(x,z,t)}\chi_{\{x+z\in\R\}}\big\|
_{L^{2}(A,dtdz)}
\lesssim
\frac{1}{w_{\a}^{+}(B(x,|y-x|))},\qquad x\ne y.
\end{align*} 
Using the estimate \eqref{nier14} of $x_{j}G_{t}^{\a,\eps}(x,y)$, \eqref{fi} and Lemma \ref{qz}, we see that
\begin{align}\nonumber
(x&_{j}+z_{j})G_{t}^{\a,\eps}(x+z,y)\sqrt{\v_{\a}(x,z,\z)}\chi_{\{x+z\in\R\}}\\\label{nier15}
\lesssim&
\sqrt{1-\z^2}\,\z^{-d-|\a|-|\eps|-1\slash 2}\,\big(\lo(\z)\big)^{-d\slash 4}(x+z)^{\eps}y^{\eps}\chi_{\{x+z\in\R\}}
\exp\bigg(\frac{\lo(\z)}{8\z}\bigg)\\\nonumber
&\times
\int\big(\e\b\big)^{1\slash 4}
\,\Pi_{\a+\eps}(ds).
\end{align} 
Now the growth condition follows with the aid of Lemma \ref{lem2} (a) (specified to $u=1$, $\xi=\rho=0$). 

To prove the first smoothness condition it suffices, in view of the relation $\delta_{j,x}^{*}=-\delta_{j,x}+2x_{j}$ and the already justified case of $S_{H}^{j,\eps,+}$ in Theorem \ref{kes}, to show that
\begin{align*}
\Big\|(a_{j}+z_{j})G_{t}^{\a,\eps}(a+z,y)\sqrt{\v_{\a}(a,z,t)}\chi_{\{a+z\in\R\}}\Big|_{a=x'}^{a=x}
\Big\|
_{L^{2}(A,dtdz)}
\lesssim
\sqrt{\frac{|x-x'|}{|x-y|}} \;
\frac{1}{w_{\a}^{+}(B(x,|y-x|))},
\end{align*} 
for $|x-y|>2|x-x'|$. To do that we split the region of integration onto $A_{1},A_{2},A_{3},A_{4}$, see the proof of the case of $S_{V}^{\eps,+}$ in Theorem \ref{kes}. The analysis related to $A_{4}$ is trivial. Estimates related to the remaining regions are contained in the following three cases. Altogether, they give the desired bound. 

{\bf Case 1:} \textbf{The norm in} $\mathbf{L^2(A_1,dtdz)}.$
Using the triangle inequality we get
\begin{align*}
\Big|(a&_{j}+z_{j})G_{t}^{\a,\eps}(a+z,y)\sqrt{\v_{\a}(a,z,t)}\chi_{\{a+z\in\R\}}\Big|_{a=x'}^{a=x}\Big|\\
\leq&
\big|(x_{j}+z_{j})G_{t}^{\a,\eps}(x+z,y)-(x_{j}'+z_{j})G_{t}^{\a,\eps}(x'+z,y)\big|
\sqrt{\v_{\a}(x',z,t)}\\
&+
\big|(x_{j}+z_{j})G_{t}^{\a,\eps}(x+z,y)\big|\big|\sqrt{\v_{\a}(x,z,t)}-\sqrt{\v_{\a}(x',z,t)}\big|\\
\equiv&
\,L_{1}(x,x',y,z,t)+L_{2}(x,x',y,z,t).
\end{align*} 
First, we analyze $L_{1}$. By the mean value theorem and \eqref{fi} it is enough to check that for any $i,j=1,\ldots,d$, we have
\begin{equation}\label{est7}
\big\|\partial_{x_{i}}\big((x_{j}+z_{j})G_{t}^{\a,\eps}(x+z,y)\big)\big|_{x=\t}
\big(\lo(\z)\big)^{-d\slash 4}\big\|_{L^{2}(A_{1},dtdz)}
\lesssim
\frac{1}{|x-y|} \;
\frac{1}{w_{\a}^{+}(B(x,|y-x|))},
\end{equation}
for $|x-y|>2|x-x'|$, where $\t$ is a convex combination of $x$ and $x'$. Using the inequality \eqref{nier16}, Lemma \ref{qz} and then Lemma \ref{lemma4.5}, we obtain
\begin{align*}
\big|\partial_{x_{i}}\big((x_{j}+z_{j})G_{t}^{\a,\eps}(x+z,y)\big)\big|_{x=\t}\big|
\lesssim&
\sqrt{1-\z^2}\,\z^{-d-|\a|-|\eps|-1}
(x^{*}+z)^{\eps}y^{\eps}
\exp\bigg(\frac{\lo(\z)}{16\z}\bigg)\\
&\,\,\,\,\,\times
\int\big(\e\eee\big)^{1\slash 128}\,\Pi_{\a+\eps}(ds)\\
&+
\chi_{\{\eps_{i}=1\}}\sqrt{1-\z^{2}}\,\z^{-d-|\a|-|\eps|-1\slash 2}
(x^{*}+z)^{\eps-e_{i}}y^{\eps}\\
&\,\,\,\,\,\times
\exp\bigg(\frac{\lo(\z)}{8\z}\bigg)\int\big(\e\eee\big)^{1\slash 64}\,\Pi_{\a+\eps}(ds),
\end{align*} 
provided that $|x-y|>2|x-x'|$. Then combining Lemma \ref{lem2} (a) (notice that on $A_{1}$ we have $x^{*}+z\in\R$) with Lemma \ref{F2} gives \eqref{est7}, and hence also the required bound for the norm of $L_{1}$. 

We now focus on $L_{2}$. Changing the variable according to \eqref{zz} and then applying sequently the estimate \eqref{nier14} of $x_{j}G_{t}^{\a,\eps}(x,y)$, \eqref{nier10}, Lemma \ref{qz}, inequality \eqref{New} (with $\xi=0$) and Lemma \ref{fakt}, we get
\begin{align*}
\|L_{2}(x,x',y,z,t)\|_{L^{2}(A_{1},dtdz)}
\lesssim&
\sqrt{|x-x'|}
\sum_{0\leq\eta\leq\eps}
\bigg(\int_{0}^{1}
\big(\lo(\z)\big)^{|\eta|}
\Big(\frac{1}{\z}\Big)^{2d+2|\a|+2|\eps|+3\slash 2}\,
x^{2\eps-2\eta}y^{2\eps}\\
&\times
\exp\bigg(\frac{\lo(\z)}{4\z}\bigg)
\bigg(\int\big(\e\b\big)^{1\slash 4}\,\Pi_{\a+\eps}(ds)\bigg)^{2}
\,d\z\bigg)^{1\slash 2},
\end{align*}
provided that $|x-y|>2|x-x'|$. From here the conclusion follows with the aid of Lemma \ref{lem1} (a) (taken with $u=3\slash 2$, $\xi=\rho=0$).

{\bf Case 2:} \textbf{The norm in} $\mathbf{L^2(A_2,dtdz)}.$
It suffices to verify that
\begin{align*}
\big\|(x_{j}+z_{j})G_{t}^{\a,\eps}(x+z,y)\sqrt{\v_{\a}(x,z,t)}\big\|
_{L^{2}(A_{2}^{k},dtdz)}
\lesssim
\sqrt{\frac{|x-x'|}{|x-y|}} \;
\frac{1}{w_{\a}^{+}(B(x,|y-x|))},
\end{align*} 
for $|x-y|>2|x-x'|$, where $A_{2}^{k}$ are the sets from the part of the proof of Theorem \ref{kes} concerning $S_{V}^{\eps,+}$. Changing the variable as in \eqref{zz}, using the inequalities \eqref{nier15}, \eqref{New} and then \eqref{nier6}, we obtain
\begin{align*}
\big\|(x_{j}&+z_{j})G_{t}^{\a,\eps}(x+z,y)\sqrt{\v_{\a}(x,z,t)}\big\|_{L^{2}(A_{2}^{k},dtdz)}\\
\lesssim&
\sqrt{|x-x'|}
\sum_{0\leq\eta\leq\eps}
\bigg(\int_{0}^{1}\big(\lo(\z)\big)^{|\eta|}
\Big(\frac{1}{\z}\Big)^{2d+2|\a|+2|\eps|+3\slash 2}\,
x^{2\eps-2\eta}y^{2\eps}\\
&\times
\exp\bigg(\frac{\lo(\z)}{4\z}\bigg)
\bigg(\int\big(\e\b\big)^{1\slash 4}\,\Pi_{\a+\eps}(ds)\bigg)^{2}
\,d\z\bigg)^{1\slash 2}.
\end{align*}
Now an application of Lemma \ref{lem1} (a) (specified to $u=3\slash 2$, $\xi=\rho=0$) leads to the desired bound. 

{\bf Case 3:} \textbf{The norm in} $\mathbf{L^2(A_3,dtdz)}.$
Here the arguments are essentially the same as in Case 2 and thus are omitted.

The proof will be finished once we show the remaining smoothness condition. Again by the relation $\delta_{j,x}^{*}=-\delta_{j,x}+2x_{j}$, the  already justified case of  $S_{H}^{j,\eps,+}$ in Theorem \ref{kes} and the mean value theorem, it suffices to prove that
$$
\big\|\partial_{y_{i}}\big((x_{j}+z_{j})G_{t}^{\a,\eps}(x+z,y)\big)\big|_{y=\t}
\sqrt{\v_{\a}(x,z,t)}\,\chi_{\{x+z\in\R\}}\big\|_{L^{2}(A,dtdz)}
\lesssim
\frac{1}{|x-y|} \;
\frac{1}{w_{\a}^{+}(B(x,|y-x|))},
$$ 
for $|x-y|>2|y-y'|$, where $\t$ is a convex combination of $y$ and $y'$. Using the estimates \eqref{nier17}, \eqref{fi}, Lemma \ref{qz} and Lemma \ref{lemma4.5}, we obtain 
\begin{align*}
\big|\partial&_{y_{i}}\big((x_{j}+z_{j})G_{t}^{\a,\eps}(x+z,y)\big)\big|_{y=\t}\big|
\sqrt{\v_{\a}(x,z,t)}\,\chi_{\{x+z\in\R\}}\\
\lesssim&
\sqrt{1-\z^2}\,\z^{-d-|\a|-|\eps|-1}\,\big(\lo(\z)\big)^{-d\slash 4}
(x+z)^{\eps}(y^{*})^{\eps}\,\chi_{\{x+z\in\R\}}\\
&\,\,\,\,\,\times
\exp\bigg(\frac{\lo(\z)}{16\z}\bigg)\int\big(\e\fff\big)^{1\slash 128}\,\Pi_{\a+\eps}(ds)\\
&+
\chi_{\{\eps_{i}=1\}}\sqrt{1-\z^{2}}\,\z^{-d-|\a|-|\eps|-1\slash 2}\,
\big(\lo(\z)\big)^{-d\slash 4}
(x+z)^{\eps}(y^{*})^{\eps-e_{i}}\,\chi_{\{x+z\in\R\}}\\
&\,\,\,\,\,\times
\exp\bigg(\frac{\lo(\z)}{8\z}\bigg)\int\big(\e\fff\big)^{1\slash 64}\,\Pi_{\a+\eps}(ds),
\end{align*}
provided that $|x-y|>2|y-y'|$. Now the desired bound follows by applying Lemma \ref{lem2} (a) and Lemma \ref{F2}.

The proof of the case of $S_{H,*}^{j,\eps,+}$ in Theorem \ref{kes} is complete. This finishes proving Theorem \ref{kes}.
\end {proof}


\end{document}